\def\ourtrt{\mathscr{T}_t}
\def\ourtr0{\mathscr{T}_0}
\def\sn{{\rm sn}}
\def\TL{{\mbox{TL}}}
\def\ra{\rightarrow}
\def\PP{{\mathcal P}}
\def\Q{{\mathcal Q}}
\let\SectMark\S
\def\S{\mathbb{S}}
\def\e{{\epsilon}}
\def\<{\langle}
\def\>{\rangle}
\newcommand\Tr{{\operatorname{Tr}}}
\newcommand\tr{{\operatorname{tr}}}
\newcommand\mytau{{\overline{\operatorname{tr}}}}
\newcommand\mnote[1]{} 
\newcommand\be{\begin{equation*}}
\newcommand\ee{\end{equation*}}
\newcommand\ben{\begin{equation}}
\newcommand\een{\end{equation}}
\newcommand\bes{\begin{eqnarray*}}
\newcommand\ees{\end{eqnarray*}}
\newcommand{\sm}{{\raise0.3ex\hbox{$\scriptstyle \setminus$}}}
\renewcommand{\phi}{\varphi}
\def\CHI{\mathchoice%
{\raise2pt\hbox{$\chi$}}%
{\raise2pt\hbox{$\chi$}}%
{\raise1.3pt\hbox{$\scriptstyle\chi$}}%
{\raise0.8pt\hbox{$\scriptscriptstyle\chi$}}}
\def\smalloplus{\raise1pt\hbox{$\,\scriptstyle \oplus\;$}}
\def\circled#1{
	\begin{tikzpicture}
		[inner sep=0.3mm, circ/.style={circle,draw,minimum size=1mm}]
		\node at (0,0) [circ] {$\scriptscriptstyle #1$}; 
	\end{tikzpicture}
}
\def\copyrightsign{\hbox{{\circled{c}}}}
\def\vertexvw#1(#2){
\begin{scope}
\node[draw,rectangle,rounded corners,minimum size=0.8cm] (#1) at (#2) {};
\clip[rounded corners] (#1.south west) rectangle (#1.north east); 
\draw[fill=gray] (#1.south west) .. controls (#1) .. (#1.north west);
\draw[fill=gray] (#1.south east) .. controls (#1) .. (#1.north east);
\end{scope}
}
\def\vertexvb#1(#2){
\begin{scope}
\node[draw,rectangle,rounded corners,minimum size=0.8cm] (#1) at (#2) {};
\clip[rounded corners] (#1.south west) rectangle (#1.north east); 
\draw[fill=gray] (#1.south west) .. controls (#1) .. (#1.north west) -- (#1.north east) .. controls (#1) .. (#1.south east);
\end{scope}
}
\def\vertexhw#1(#2){
\begin{scope}
\node[draw,rectangle,rounded corners,minimum size=0.8cm] (#1) at (#2) {};
\clip[rounded corners] (#1.south west) rectangle (#1.north east); 
\draw[fill=gray] (#1.south west) .. controls (#1) .. (#1.south east);
\draw[fill=gray] (#1.north west) .. controls (#1) .. (#1.north east);
\end{scope}
}
\def\vertexhb#1(#2){
\begin{scope}
\node[draw,rectangle,rounded corners,minimum size=0.8cm] (#1) at (#2) {};
\clip[rounded corners] (#1.south west) rectangle (#1.north east); 
\draw[fill=gray] (#1.south west) .. controls (#1) .. (#1.south east) -- (#1.north east) .. controls (#1) .. (#1.north west);
\end{scope}
}
\def\cupb#1(#2){
\node[rectangle,minimum size=0.8cm] (#1) at (#2) {};
\draw[rounded corners] ([xshift=-0.2cm] #1.south west) rectangle ([xshift=0.2cm] #1.north east);
\draw[fill=gray] (#1.north west) .. controls ++(0,-0.6) and ++(0,-0.6) .. (#1.north east);
}
\def\cupcupb#1(#2){
\node[rectangle,minimum size=0.8cm] (#1a) at ([xshift=-0.6cm]#2) {};
\node[rectangle,minimum size=0.8cm] (#1b) at ([xshift=0.6cm]#2) {};
\draw[rounded corners] ([xshift=-0.2cm] #1a.south west) rectangle ([xshift=0.2cm] #1b.north east);
\draw[fill=gray] (#1a.north west) .. controls ++(0,-0.6) and ++(0,-0.6) .. (#1a.north east);
\draw[fill=gray] (#1b.north west) .. controls ++(0,-0.6) and ++(0,-0.6) .. (#1b.north east);
}
\def\halfvertexwl#1(#2){%
\node[rectangle,minimum height=0.8cm,minimum width=0.4cm] (#1) at (#2) {};
\begin{scope}
\clip[draw] decorate[decoration={zigzag,segment length=0.12cm,amplitude=1pt}] { ([yshift=-0.04cm]#1.east) --  (#1.south east) } [rounded corners] -- (#1.south west) -- (#1.north west) [sharp corners] -- (#1.north east) decorate[decoration={zigzag,segment length=0.12cm,amplitude=1pt}] { -- ([yshift=0.04cm]#1.east)};
\draw[fill=gray] (#1.north west) .. controls (#1.east) .. (#1.south west);
\end{scope}
}
\def\halfvertexwr#1(#2){%
\node[rectangle,minimum height=0.8cm,minimum width=0.4cm] (#1) at (#2) {};
\begin{scope}
\clip[draw] decorate[decoration={zigzag,segment length=0.12cm,amplitude=1pt}] { ([yshift=-0.04cm]#1.west) --  (#1.south west) } [rounded corners] -- (#1.south east) -- (#1.north east) [sharp corners] -- (#1.north west) decorate[decoration={zigzag,segment length=0.12cm,amplitude=1pt}] { -- ([yshift=0.04cm]#1.west)};
\draw[fill=gray] (#1.north east) .. controls (#1.west) .. (#1.south east);
\end{scope}
}
\def\halfvertexbl#1(#2){%
\node[rectangle,minimum height=0.8cm,minimum width=0.4cm] (#1) at (#2) {};
\begin{scope}
\clip[draw] decorate[decoration={zigzag,segment length=0.12cm,amplitude=1pt}] { ([yshift=-0.04cm]#1.east) --  (#1.south east) } [rounded corners] -- (#1.south west) -- (#1.north west) [sharp corners] -- (#1.north east) decorate[decoration={zigzag,segment length=0.12cm,amplitude=1pt}] { -- ([yshift=0.04cm]#1.east)};
\draw[fill=gray] (#1.north west) .. controls (#1.east) .. (#1.south west) -- ([xshift=2pt]#1.south east) -- ([xshift=2pt]#1.north east);
\end{scope}
}
\def\halfvertexbr#1(#2){%
\node[rectangle,minimum height=0.8cm,minimum width=0.4cm] (#1) at (#2) {};
\begin{scope}
\clip[draw] decorate[decoration={zigzag,segment length=0.12cm,amplitude=1pt}] { ([yshift=-0.04cm]#1.west) --  (#1.south west) } [rounded corners] -- (#1.south east) -- (#1.north east) [sharp corners] -- (#1.north west) decorate[decoration={zigzag,segment length=0.12cm,amplitude=1pt}] { -- ([yshift=0.04cm]#1.west)};
\draw[fill=gray] (#1.north east) .. controls (#1.west) .. (#1.south east) -- ([xshift=-2pt]#1.south west) -- ([xshift=-2pt]#1.north west);
\end{scope}
}
\def\fixbb{
\useasboundingbox ([xshift=-10pt,yshift=-2pt] current bounding box.south west) ([xshift=10pt,yshift=2pt] current bounding box.north east);
}
\tikzset{connect/.style={fill=gray,fill opacity=0.2,draw,densely dashed,very thin}}
\tikzset{braid/.style={double,double distance=0.1cm}}
\begin{document}

\author{A. Guionnet\inst{1}\thanks{A.G. and P.Z.-J. are supported by the ANR project ANR-08-BLAN-0311-01.}
\and V.~F.~R. Jones\inst{2}\thanks{V.J.'s research is supported by NSF grant DMS-0856316} \and D. Shlyakhtenko\inst{3}\thanks{D.S.'s research is supported by NSF grant DMS-0900776} \and P. Zinn-Justin\inst{4}}
\institute{UMPA, CNRS  UMR 5669, ENS Lyon, 46 all\'ee d'Italie,
69007 Lyon, France. \email{aguionne@ens-lyon.fr}
\and
Department of Mathematics, UC Berkeley, Berkeley, CA 94720. \email{vfr@math.berkeley.edu}
\and  
Department of Mathematics, UCLA, Los Angeles, CA 90095. \email{shlyakht@math.ucla.edu.}
\and
UPMC Univ Paris 6, CNRS UMR 7589, LPTHE,
75252 Paris Cedex, France. \email{pzinn@lpthe.jussieu.fr}
}

\title{Loop models, random matrices and  planar algebras}
\date{June 1, 2012}
\maketitle
\begin{abstract}We define matrix models that converge to the generating 
functions of a wide variety of  loop models with fugacity taken in 
sets with an accumulation point. The latter can also be seen as moments of
a non-commutative law on a subfactor planar algebra.  We apply this construction to compute the generating functions of the Potts model on a random planar map.
\end{abstract}

\section{Introduction}
Loop models  naturally appear in a variety of statistical models  where the loops represent the configuration of boundaries of some random regions. In this article, we restrict ourselves to loop models on random planar maps.  Perhaps  the most famous of these is  the  
so-called  $O(n)$ loop model  which can be described as follows. 

Consider the two-sphere $S^2$, and fix $r$ disjoint disks $\mathscr{D}_j: 1\leq j\leq r$ inside of $S^2$.  Each disk is given an even number of boundary points, one of which is  marked.  By a {\em tangle} we mean (the isotopy class of) any possible collection of non-intersecting strings in $S^2\setminus(\mathscr{D}_1\cup \dots\cup\mathscr{D}_r)$ joining the boundary points. 
$$
\tikzset{vertex/.style={circle,fill=black,inner sep=1.5pt}}
\tikz[baseline=0,label distance=-2mm]{
\begin{scope}
\node[draw,circle, minimum size=4cm](c) at (0,0){};
\node[draw,rectangle,rounded corners,minimum size=0.8cm] (a) at (-1,0) {};
\node[draw,rectangle,rounded corners,minimum size=0.8cm] (b) at (1,0) {};
\node[draw,rectangle,rounded corners,minimum size=0.8cm] (d) at (0,1) {};
\draw[line width=1.5pt](a.south west) node[vertex] {} .. controls (-1,-1)  .. (a.south east) node[vertex] {};
\draw[line width=1.5pt](a.north east) node[vertex] {}.. controls (-0.5,0.5) .. (d.south west) node[vertex] {};
\draw[line width=1.5pt](a.north west) node[vertex,label={below right:$*$}] {}.. controls (-1.5,1.5) .. (d.north west) node[vertex,label={below right:$*$}] {};
\draw[line width=1.5pt](b.north west) node[vertex,label={below right:$*$}] (bnw) {} .. controls (0,0) .. (b.south west) node[vertex] {};
\draw[line width=1.5pt](b.north east) node[vertex] {} .. controls (2,0) .. (b.south east) node[vertex] {};
\draw[line width=1.5pt](d.north east) node[vertex] {} .. controls (1,1) .. (d.south east) node[vertex] {};
\end{scope}}
 $$

By a {\em vertex} (also called a Temperley--Lieb diagram) \label{firstdefof:TL} we mean an (isotopy class of) arbitrary non-intersecting collection of strings drawn inside a disk with a given even number of boundary points (one of which is marked).  

By a {\em configuration $P$ built on vertices $D_1,\dots,D_r$} we mean a tangle $Q$ into which the vertices $D_1,\dots,D_r$ have been inserted (so that $D_j$ is inserted into $\mathscr{D}_j$ in a way that boundary points match, and marked boundary points match).  In the $O(n)$ model, the vertices are all assumed to be copies of the same vertex $D$:
Let $D$ be given by the following picture:
$$^*\tikz[baseline=0]{
\begin{scope}
\node[draw,rectangle,rounded corners,minimum size=0.8cm] (a) at (0,0) {};
\clip[rounded corners] (a.south west) rectangle (a.north east); 
\draw[line width=1.5pt](a.south west) .. controls (a) .. (a.north west);
\draw[line width=1.5pt](a.south east) .. controls (a) .. (a.north east);
\end{scope}}
$$  The outer boundary is represented by a thin line.  The boundary contains $4$ {\em boundary points}, which are joined inside of the disk by non-intersecting strings (represented by thick lines).  One of the boundary points is marked by a $*$ to distinguish it from the others.
A possible configuration is drawn below:
$$
\tikz[baseline=0]{
\begin{scope}
\node[draw,circle, minimum size=4cm](c) at (0,0){};
\node[draw,rectangle,rounded corners,minimum size=0.8cm] (a) at (-1,0) {};
\draw[line width=1.5pt](a.south west) .. controls (a) .. (a.north west);
\draw[line width=1.5pt](a.south east) .. controls (a) .. (a.north east);
\node[draw,rectangle,rounded corners,minimum size=0.8cm] (b) at (1,0) {};
\draw[line width=1.5pt](b.north west) .. controls (b) .. (b.north east);
\draw[line width=1.5pt](b.south west) .. controls (b) .. (b.south east);
\node[draw,rectangle,rounded corners,minimum size=0.8cm] (d) at (0,1) {};
\draw[line width=1.5pt](d.south west) .. controls (d) .. (d.north west);
\draw[line width=1.5pt](d.south east) .. controls (d) .. (d.north east);
\draw[line width=1.5pt](a.south west) .. controls (-1,-1)  .. (a.south east);
\draw[line width=1.5pt](a.north east) .. controls (-0.5,0.5) .. (d.south west);
\draw[line width=1.5pt](a.north west) .. controls (-1.5,1.5) .. (d.north west);
\draw[line width=1.5pt](b.north west) .. controls (0,0) .. (b.south west);
\draw[line width=1.5pt](b.north east) .. controls (2,0) .. (b.south east);
\draw[line width=1.5pt](d.north east) .. controls (1,1) .. (d.south east);
\end{scope}}
$$

Fix a number $n$ (weight loop, sometimes also denoted $\delta$ and called {\em fugacity}) and associate to a configuration $P$ the {\em value of $P$}, computed as follows.  For each $\mathscr{D}_j$, remove its outer boundary.  We are left with a collection of closed loops (formed by strings outside and inside $\mathscr{D}_1,\dots,\mathscr{D}_r$).  The value of $P$ is then given by $$\textrm{value}(P)=n^{\#\textrm{ loops in P}}.$$  

In studying this loop model, one is interested in understanding the partition function
\begin{equation}\label{ftrbeta}
f_n(t) = \sum_{r=0}^\infty
\frac{t^{r}}{r!} \sum_{P\in
P(r)}\textrm{Value}(P)
\end{equation}
where the sum is taken over the set $P(r)$ of all configurations  built on  $r$ copies of  $D$ (note here that the vertices and boundary points are labeled, so configurations corresponding to different matchings of the labeled boundaries of vertices and the tangle give rise to different terms in the summation). 

One can consider a 
  slightly more complicated situation, where exactly one of the vertices $T$ (sometimes called the external face) is different from $D$.  For example, $T$ can be taken to be 
 $$^*\tikz[baseline=0]{
\begin{scope}
\node[draw,rectangle,rounded corners,minimum size=0.8cm] (a) at (0,0) {};
\clip[rounded corners] (a.south west) rectangle (a.north east); 
\draw[line width=1.5pt](a.south west) .. controls (a) .. (a.north west);
\draw[line width=1.5pt](a.north)..controls(a)..(a.south);
\draw[line width=1.5pt](a.south east) .. controls (a) .. (a.north east);
\end{scope}}
$$

We can  then consider the observable
\begin{equation}\label{trbeta}
\mathscr{T}^n_t( T): = \sum_{r=0}^\infty
\frac{t^{r}}{r!} \sum_{P\in
P(r,T)}
\textrm{Value}(P)
\end{equation}
where $P(r,T)$ is the configuration based on $r$ 
 copies of $D$ and one copy of $T$.

The loop model we described has been widely studied, in part because of its connection with the critical Potts model.
In the case that $n$ takes an integer value, the observable $\mathscr{T}^n_t$ can be related to a random matrix model as follows.
 Let $A_i, 1\le i\le n,$ be $n$ $M\times M$ independent GUE matrices whose entries have variance $M^{-1}$. For a vertex $T$, enumerate its boundary points starting with the marked one clockwise.  Then consider the pair partition $$\sim_T:\qquad i\sim_T j \quad\textrm{iff points $(i,j)$ of $T$ are linked by a string inside $T$.}$$  
 For a Temperley--Lieb element $T$ with $2p$ boundary points, define the polynomial $P_T$   in $n$ non-commutative indeterminates  $(X_1,\ldots, X_n)$ by
\begin{equation}\label{defpt}P_T(X_1,\ldots,X_n)=\sum_{\substack{1\le i_1,\ldots,i_{2p}\le n \\ i_l=i_k\textrm{ if } l\sim_T k} } X_{i_1}X_{i_2}\cdots X_{i_{2p}}\,. 
\end{equation}
Then,  Wick calculus (see also its large $M$ limit given by Voiculescu's asymptotic freeness \cite{dvv:random,dvv:book}) shows that
$$\lim_{M\ra\infty} \mathbb E\left[\frac{1}{M}\Tr\left( P_T(A_1,\cdots,A_n)\right)\right]
=\mathscr{T}_{t=0}^{n}(T)$$
where $\Tr$ is the standard unnormalized trace, $\Tr(A)=\sum A_{ii}$. 
Moreover, one can generalize this relation to negative real numbers
$t$  which are sufficiently close to zero by instead considering random matrices chosen according to a certain Gibbs measure (rather than the Gaussian measure).
For each $M=1,2\dots$, consider the random choice of $n$ matrices $A_1,\dots,A_n$ of size $M\times M$ according to the probability measure
$$\label{eq:GibbsMeasureRM}
d\mu_{t}^{n,M} (A_1,\ldots,A_n)= \frac{1}{Z_N} \exp\left\{-M  \operatorname{Tr}\left(
\sum_{j=1}^n A_j^2 - t \big(\sum_{i=1}^n A_i^2 \big) ^2\right)\right\} dA_1\dots dA_n
$$ 
where $dA_1\dots dA_n$ denotes the Lebesgue measure on the space of $n$-tuples of Hermitian matrices.  This is the so-called $O(n)$ matrix model.  The specific choice of the potential $- t \big(\sum_{i=1}^n A_i^2 \big) ^2 $ corresponds precisely to our way of representing by a polynomial  the diagram $D = {}^*\tikz[baseline=0]{
\begin{scope}
\node[draw,rectangle,rounded corners,minimum size=0.8cm] (a) at (0,0) {};
\clip[rounded corners] (a.south west) rectangle (a.north east); 
\draw[line width=1.5pt](a.south west) .. controls (a) .. (a.north west);
\draw[line width=1.5pt](a.south east) .. controls (a) .. (a.north east);
\end{scope}}
$ appearing in the definition of the loop model observable $\mathscr{T}$.  

The random matrix model then computes the observable: according to \cite{brezin-itzykson-parisi-zuber:planarDiagrams} (see also \cite{guionnet-edouard:combRM} for the derivation of the asymptotics of multi-matrix models) if $t<0$ is small enough
for any polynomial $P$ in the set of  polynomial of  $n$ non-commutative variables
$$\lim_{M\ra\infty} \int \left[\frac{1}{M}\Tr\big( P(A_1,\cdots,A_n)\big)\right]d\mu_{t}^{n,M} (A_1,\ldots,A_n)=\mytau_t^n(P)$$
where,
if $T$ is a Temperley--Lieb element, and $P_T$ is given by \eqref{defpt}
$$\mytau_t^n(P_T)=
{\mathscr{T}}^{n}_{t}(T)\,.$$ 

This representation of the $O(n)$ model as the asymptotics of a matrix model is useful in many respects. It was used to actually compute
the partition function $f_n(t)$, see \cite{DK88,Kos89,KS92,EK95,BE09}.

Finally, we mention that in the context of Voiculescu's free probability theory, \label{freeProbabilityConnection} the observables $\mytau^n_t$ and  $\mathscr{T}$ have an interpretation as a {\em non-commutative law}. Indeed, $\mytau^n_t$ is a tracial linear functional on the set of polynomials in $n$ non-commutative self-adjoint variables equipped with its natural product. For $\mathscr{T}$,
 the observation is that polynomials of the form $P_T$ form a subalgebra of the $*$-algebra of non-commutative polynomials in $n$ self-adjoint variables.   Multiplication of polynomials $P_T$, $P_S$ corresponds to combining Temperley--Lieb diagrams in a disjoint fashion, $T\wedge S$, and taking adjoints amounts to flipping the diagram along a vertical axis.  The observable function $\mathscr{T}$ then gives us a tracial linear functional on this algebra.

  Remarkably, for small  real values of $t$, this linear functional is a state: $\mytau_t^n (P P^*)\geq 0$ for any $P$, a property which therefore also holds for $\mathscr{T}_t^n$.  This means that for ``self-adjoint'' diagrams $T$, the values of the observable $\{\mathscr{T}^n_t (T^k)\}_{k\geq 0}$ are moments of a probability measure, which turns out to be a useful observation. 

The non-commutative law $\tau = \mytau_t^n$ can be called a {\em free Gibbs state}, the free-probabi\-lity analog of a Gibbs measure.  The combinatorics of this law can be compactly encoded by the so-called Schwinger--Dyson (or loop) equation, which is a free analog of integration by parts, and for which the linear functional  $\tau=\mytau_t^n$ is the unique (suitably bounded) solution: 
\begin{equation}
\tau (X_j P) = \tau \otimes \tau (\partial_j P) - t \ \tau (P \mathscr{D}_j P_D), \qquad\forall P \label{eq:SchwingerDysonIntro}
\end{equation}
where $\partial_j$ and $\mathscr{D}_j$ are certain differential operators occurring in free probability theory \cite{dvv:entropy5,dvv:entropysurvey,voiculescu:conjectureAboutPotentials,guionnet-edouard:combRM,alice:StFlour}.

To summarize our discussion of the $O(n)$ model, we observe that we have (well-known) equality of the following values, for any $\TL$ diagram $T$:
\begin{enumerate}
\item The value of the observable $\mathscr{T}_t^n(T)$ obtained by summing over all possible configurations certain values associated to each configuration; it is defined for small values of   $t$ and all (possibly non-integer) $n$;
\item The value of the free Gibbs law $\tau(P_T)$; here $\tau$ was defined only for integer $n$ as the unique trace on the algebra non-commutative polynomials in $n$ variables which solves the Schwinger--Dyson equation \eqref{eq:SchwingerDysonIntro};
\item The limit  $\mytau_t^n(P_T)$  as $M\to\infty$ of expected values $\mathbb{E}_{\mu_t^{n,M}}( M^{-1}\Tr(P_T))$; it is  defined only for integer $n$ averaging with respect to certain Gibbs measures on $n$-tuples of $M\times M$ self-adjoint matrices.
\end{enumerate}

The goal of this paper is to generalize and extend these three objects by making use of the notion of a {\em Jones' subfactor planar algebra}.  The planar algebra is an abstract object that simultaneously encodes the value of the fugacity $n$ and the possible choices of the vertices $T$, $D$ in the loop model.  

Thus given a planar algebra $\mathcal{P}$ and having chosen elements $S,S_1,\dots,S_n$ of $\mathcal{P}$, we construct:
\begin{itemize}
\item A observable $\mathscr{T}_t$ for the associated loop model;
\item A family of Gibbs measures $\mu_{t}^{M}$ defined on certain spaces of matrices with size going to infinity with $M$;
\item An algebra $A=\{P_T : T\in \mathcal{P}\}$ with the  linear functional
 $\mathscr{T}_{t} : A\to \mathbb C$  satisfying a certain Schwinger--Dyson equation and a map $A\ni T\mapsto P_T$ into our space of matrices.
 \end{itemize}
Moreover, we prove that these three objects are related by equalities: $$\mathscr{T}_t(S) = \lim_{M\to\infty}\frac{1}{M}\int {\rm Tr}(P_S) d\mu_t^{M}.$$

In other words, we introduce generalizations of loop models whose vertices are from an arbitrary planar algebra $\mathcal P$; find analogs of the Schwinger--Dyson equation and of free Gibbs states on graded algebras associated to $\mathcal P$; and construct random matrix models that give the appropriate free Gibbs state in the large-$M$ limit. 

In particular, by specializing to the Temperley--Lieb planar algebra $\mathcal{P}=\TL$, our work allows one to represent the observable $\mathscr{T}_t^n$ as random matrix integrals for non-integer values of $n\in \{2\cos (\pi/p): p\geq 3\}\cup[2,+\infty)$. 
  This is important because $\mathscr{T}^n_t$ is analytic in $n$ on a certain domain (see Lemma \ref{lemana} in the appendix) and thus knowing it on a set with an accumulation point determines the function. Our work thus put on a firm mathematical ground the (often assumed) analytic extension of $\mathscr{T}^n_t$ as a function of $n$, and this in a remarkably general setting.

The passage to general planar algebras also naturally leads us to consider shaded tangles and vertices.  This in particular allows us to consider a kind of $U(n)$ model (closely related to the $O(n)$ model) in which configurations are shaded.  This model corresponds to the Potts model, rather than just the critical Potts model described by the $O(n)$ model.  Using our random matrix model, we are able to compute a observable of the $U(n)$ model, giving a refinement of the computation in the $O(n)$ case.

In the next three sections, we give a few details about our construction.

\subsection{Loop models associated to arbitrary Jones planar algebras}
\label{sect:ArtitraryLoops}
We begin by noting that it is possible to define an analog of the loop model if one permits the vertices $T$ and $D$ to lie in an arbitrary {\em Jones subfactor planar algebra}.  We will review the definition of a planar algebra later in the paper; to give an informal idea of the connection we again consider a planar tangle, but this time also requiring that there be  
 a checkerboard shading of the regions between the strings.  The main axiom of a subfactor planar algebra then specifies that given such a tangle and $k$ arbitrary elements of the planar algebra, one obtains a number, which is the value of the ``configuration obtained by gluing in the vertices into the tangle''.   Thus if we choose as our vertices an element $T$ and $n$ copies of an element $D$ of the planar algebra, then one can make sense of the observable $\mathscr{T}_t(T,D)$ 
 just as before, by forming the summation over all tangles of the values of the resulting configurations.  

A particular example of a planar algebra is the (unshaded) Temperley--Lieb planar algebra $TL(n)$, whose elements are exactly the Temperley--Lieb diagrams (the fugacity $n$ is part of the planar algebra structure).  Considering this planar algebra leads to the observable $\mathscr{T}_t^n$ we described earlier in the introduction.

Another example is the shaded Temperley--Lieb model, in which we require that the vertices be Temperley--Lieb diagrams with a fixed checkerboard shading of the regions between strings.

Requiring tangles to be shaded comes naturally from the theory of  subfactors: 
the planar algebra of a subfactor $N\subseteq M$ is necessarily shaded, the shaded and unshaded 
regions corresponding to $N$ and $M$ respectively \cite{jones:planar}. But planar algebras
do not need to be shaded. In fact the original
Temperley--Lieb equivalence \cite{TL-correspondence} was a mathematical equivalence between a shaded
model (the Potts model) and an unshaded one (Lieb's ice-type model). This equivalence
is clearly described in chapter 12 of  \cite{baxter-solvable} where we see that
 the spins of the Potts 
model live in the shaded
regions of a shaded 4-valent planar graph,  while for the ice-type model there are only two "spin" states which live on
the \emph{edges} of that graph.
 A more general form of TL equivalence  is that all representations
of the Temperley--Lieb planar algebra will yield the same answer for the partition function
of a model whose Boltzmann weights are defined using elements of the TL algebra. (The interactions
may involve more than two spins, even in the shaded case.)
 
In this paper we give matrix models for shaded planar algebras, for which  't Hooft diagrams
in the perturbative expansion \emph{are only allowed if they can be shaded}. 
In the particular case of the planar algebra underlying the Potts model,   
a quartic potential that uses the shading in an essential way is shown to yield the generating function
for the Tutte polynomials of planar graphs.
It is conceivable that one may obtain the same generating function from an unshaded 
planar algebra by some kind of TL equivalence. But there are
shaded planar algebras which are not equivalent to any unshaded 
planar algebra, and matrix models whose potentials are elements of these planar algebras would
be inaccessible to unshaded models.

 \subsection{Random matrix models}
In the case of our specific loop model and for integer values of fugacity, the partition function of the model could be expressed as a limit of random matrix integrals.  For example, if $D = \tikz[baseline=0]{
\begin{scope}
\node[draw,rectangle,rounded corners,minimum size=0.8cm] (a) at (0,0) {};
\clip[rounded corners] (a.south west) rectangle (a.north east); 
\draw[line width=1.5pt](a.south west) .. controls (a) .. (a.north west);
\draw[line width=1.5pt](a.south east) .. controls (a) .. (a.north east);
\end{scope}}
$, one is led to the random matrix model associated to the Gibbs measure on matrices given by \eqref{eq:GibbsMeasureRM}.  Considering a shaded analog of our loop model (corresponding to the observable given by 
\begin{equation}\label{defdbcup}
\mathscr{T}^\delta_{t_1,t_2}(T) = \sum_{r_1,r_2=0}^\infty
\frac{t_1^{r_1}t_2^{r_2}}{(r_1!)(r_2!)} \sum_{P\in
P(r_1,r_2,T)}
\delta^{\#\mbox{ loops in } P}\end{equation}
where the sum is over all possible 
sum over all possible configurations involving a vertex $T$, $r_1$ copies of  \tikz[baseline=0]{\vertexvw{a}(0,0)} and $r_2$ copies of  \tikz[baseline=0]{\vertexvb{a}(0,0)}) leads to a kind of $U(n)$ matrix model which corresponds to integration over $M\times M$ non-Hermitian matrices $A_1,\dots,A_n$ with respect to the Gibbs measure
\begin{multline} 
d\mu_{t_1,t_2}^{M} (A_1,\ldots,A_n)= \frac{1}{Z_{t_1,t_2}^M} \exp\Bigg\{-M \operatorname{Tr}\Big(
\sum_{j=1}^n A_j A_j^* \label{udelta} \\ - \sum_{i,j=1}^n \left(t_1  A_i A_j^* A_j A_i^* + t_2 A_i A_i^* A_j A_j^*\right)  \Big) \Bigg\} dA_1\dots dA_n.
\end{multline} 
It is not hard to generalize the definition of $P_T$ given in \eqref{defpt} by replacing some of the $A_i$ by $A_i^*$ to prove that
$$\lim_{M\ra\infty} \mu_{t_1,t_2}^{M}\big( \frac{1}{M}\Tr(P_T(A_1,\ldots, A_n))\big)= \mathscr{T}^n_{t_1,t_2}(T)\,.$$
It is natural to wonder if there are any random matrix models that can be defined for non-integer values of $n$.  To be consistent with subfactor notation we will from now on use the letter $\delta$ to denote the fugacity (or loop weight) instead of $n$.

In this paper, we show that this is the case for arbitrary loop models  coming from arbitrary planar algebras (satisfying a technical restriction).  In particular, this allows us to construct random matrix models that correspond to loop models with $\delta$ in the set $\{2\cos(\pi/p):p\geq 3\}\cup[2,+\infty)$.   

A key step in this construction uses the fact that arbitrary subfactor planar algebras can be viewed as subspaces of the linear spaces generated by loops on certain bipartite graphs (the so-called principal graphs).   Thus in our matrix model the random matrices are indexed by the edges of a bipartite graph $G=(V,E)$ with vertices $V$ and edges $E$, whose adjacency matrix $\Gamma$
has Perron--Frobenius eigenvalue $\delta$, which restricts ourselves to the above possible values.
As $G$ is bipartite,  $V=V^+\cup V^-$. We will see that the configurations can be indexed by the vertices and the (oriented) edges of the graph; shaded (resp. unshaded) regions will
be labelled by vertices from $V_-$ (resp. $V_+$) and the strings between a region indexed by $u$ and another by $v$ will be indexed by edges between $u$ and $v$. At the level of  the matrix model,
if  $A_{e}$ denotes the matrix
indexed by the edge $e$ from vertex $s$ to vertex $t$ in the bipartite graph, then $A_e$  will
 be a $[\mu(s)M]\times [\mu(t)M]$ matrix with $M$ a parameter going to infinity
and $\mu(v), v\in V$ the  Perron--Frobenius eigenvector of $\Gamma$ for the eigenvalue $\delta$ (here $[\cdot]$ denotes the integer part of a number).  We will require that $A_{e^o} = A_e^*$ where $e^o$ denotes the edge $e$ with opposite orientation.
To simplify, let us consider the planar algebra generated by shaded Temperley--Lieb
elements. In that case the graph $G$ can be chosen to be the Dynkin graph $\mathbb{A}_m$ (with $m$ a function of $\delta$).  
Let $B$ be  a shaded Temperley--Lieb
element. The generalization of the polynomial
$P_B$ of \eqref{defpt}  is then given by
$$
P_B^v(X_e, e\in E)=\sum_{e_j=e_p^o \textrm{ if }
j\stackrel{B}{\sim} p} \sigma_B(e_1\cdots e_{2k})
 X_{e_1} \cdots X_{e_{2k}}$$
where we sum over all loops $w=e_1\cdots e_{2k}$  starting (and finishing) at
$v$ in $G$ 
so that if $j\sim_B p$, $e_p$ is the opposite edge $e_j^o$ of $e_j$. Here
$\sigma_B(e_1\cdots e_{2k})$ is an appropriate constant (see equation 
\eqref{eq:defOfSigmaE}) which only depends on the Perron--Frobenius vector $\mu$ and the loop $e_1,\cdots,e_{2k}$. Here $v\in V_+$ iff the marked point of $B$ is in an unshaded region.

 To give a more precise idea of our matrix model let us first consider 
the case without interaction (that is when the parameter $t$ vanishes)
and denote $\mathscr{T}_0^\delta=\mathscr{T}_{0,0}$ with
$\mathscr{T}_{0,0}$ defined in \eqref{defdbcup}. In this case,
the entries  of $A_e$ will be independent Gaussian variables with
 covariance  given by $\sqrt{\mu(s)\mu(t) M^2}^{-1}$. We denote $\mu^M$
the law of the  matrices $A_e, e\in E$ which are independent 
modulo the symmetry constraint $ A_{e^o}=A^*_e$.
 If we have a shaded
Temperley--Lieb element $S$ with marked point in an unshaded region,
it follows from Wick calculus  (see also \cite{GJS07}) that for any  $v\in V_+$
$$\lim_{M\ra\infty}\mathbb E[ \frac{1}{M\mu(v)}\Tr(P_S^v(A_e,e\in E))]
=\mathscr{T}_0^\delta(S)\,.$$
Hence $\mathscr{T}_0(S)$ can be constructed as a limit of matrix models.
This fact was used in \cite{GJS07} to construct tracial states on planar algebras. In the case with interaction, $t\neq 0$, we need to add an interacting potential as we did in the case where $\delta=n$ is an integer number. This potential is defined in the same spirit as  the polynomials $P_B^v$: namely, if we want to add   in the interaction a shaded Temperley--Lieb element $S$ with
marked point in (say) an unshaded region
 we may  consider the potential
$$W_S(X_e, e\in E)=\sum_{v\in V^+}\mu(v) P^v_S(X_e, e\in E).$$
Let us now define, for $S_1,\dots,S_r$ shaded Temperley--Lieb elements the Gibbs measure
$$
d\mu_{t_1,\dots,t_r}^M (A_e, e\in E)
= \frac{1}{Z^{M}_{t_1,\dots,t_r}} \exp \left(M\operatorname{Tr} \Big\{\sum_{k} t_k W_{S_k} (A_e,e\in E)\Big\} \right)
 d\mu^M (A_e,e\in E)
$$
(for this to be well defined, we may need to  add a large enough cut off, see section \ref{pottsmatrixmodel}; we will also require that the graph $G$ be finite (otherwise we can show in this case that
the measure can still be defined due to the exponentially small correlations of matrices with ``far enough'' edges, see section \ref{Ainfty})). Note that even though  $W_S(A_e,e\in E)$ is a sum of square matrices with possibly different dimensions, its trace is well defined by linearity.

For instance,  
the matrix model associated with the $U(\delta)$ model \eqref{udelta} we already 
discussed is given, see \eqref{defdbcup},  by 
$$
d\mu_{t_1,t_2}^M(A)=\frac{1 }{Z^{M}_{t_1,t_2}}
\exp\Bigg\{ M \sum_{v\in V} (t_2 1_{v\in V_-}+t_1 1_{v\in V_+})\mu(v)  \Tr
\left(\sum_{e:s(e)=v}\sigma(e) A_{e}A_{e}^*\right)^2
\Bigg\}d\mu^M(A)
$$
This Gibbs measure is well defined provided $t_1,t_2\le 0$.

In the $U(n)$ model for $n$ integer, Temperley--Lieb diagrams can be represented as loops on the graph consisting of two vertices and $n$ edges between them: to a diagram $T$ with $2r$ boundary points we associate the sum of loops
\begin{equation}\label{defsigmaT}
\sigma_T = \sum_{e_{i_j}=e_{i_k}^o \mbox{ if } j\sim_T k} \sigma(e_1\cdots e_{i_{2r}}) e_{i_1} e_{i_2} \dots e_{i_{2r-1}} e_{i_{2r}}
\end{equation}
where $e_k$ denotes the $k$-th (oriented) edge from the first vertex to the second one, and $e_k^o$ denotes the same edge with opposite orientation.  
Thus the random matrix model involves non-self-adjoint $M\times M$ matrices $A_k$ corresponding to the edge $e_k$ (the adjoint $A_k^*$ corresponds to $e_k^o$).  The diagrams 
${}^*\tikz[baseline=0]{\vertexvw{a}(0,0)}$ and   ${}^*\tikz[baseline=0]{\vertexvb{a}(0,0)}$ then correspond to the terms $\sum_{ij} A_j^* A_i A_i^* A_j$ and $\sum_{ij} A_j  A_i^* A_i A_j^*$ which (after a cyclic permutation not changing the value of the trace) can be recognized as the two quartic terms in the potential of the $U(n)$ model.  (Note that in the case where the fugacity takes integer values, because of the special structure of the graph, the Perron--Frobenius eigenvector is identically equal to $1$ and so $\sigma(e_1\cdots e_{i_{2r}}) = 1$.)

One of the main theorems of this paper is the following  generalization of the representation of  the observables associated to loop models by the asymptotics of random matrix models:
\begin{theorem}\label{cocomain} 
Let $\delta\in\{2\cos(\frac{\pi}{p})\}_{ p\ge 3}\cup [2,\infty[$
and let  $S,S_1,\ldots,S_k$ be $k$ fixed shaded elements of the Temperley--Lieb planar algebra with fugacity $\delta$.   
For $t_i, 1\le i\le k$, small enough real numbers, consider the observable
$$\mathscr{T}_t(S) = \sum_{r_1,\dots,r_k} \prod_{i=1}^k \frac{(t_i)^{r_i}}{r_i !} \sum_{P\in P(r_1,\dots,r_k)} \textrm{Value}(P)  
$$ where $P(r_1,\dots,r_k)$ is the set of configurations  build on $S$, $r_1$ copies of $S_1$, $r_2$ copies of $S_2$, etc.  and $$\textrm{Value}(P)=\delta^{\#\textrm{closed loops}}.$$
Then, for all $v\in V_+$ (resp. $V_-$) if the marked boundary segment is in an unshaded (resp. shaded) region,

 $$\mathscr{T}_t(S) = \lim_{M\ra\infty}\mathbb E_{\mu_t^{M}}\left[ \frac{1}{M\mu(v)}\Tr(P_S^v(A_e,e\in E))\right]
.$$
\end{theorem}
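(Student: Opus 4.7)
The plan is to first establish the identity at $t=0$ via Wick's formula (essentially the result of \cite{GJS07}), and then to build the general statement by a perturbative expansion of the Gibbs density $e^{M\operatorname{Tr}(\sum_k t_k W_{S_k})}$ around the Gaussian measure $\mu^M$, term by term in the $t_i$. At each order the coefficient is a Gaussian expectation of a product of $P_S^v$ with products of $W_{S_i}$'s, and Wick's theorem reduces this to a sum over pair partitions of all matrix-entry indices. The key observation is that the combinatorics of these pair partitions, together with the shape of the $P_S^v$ and $W_{S_i}$ terms, exactly reproduces the combinatorics that enumerates configurations in $P(r_1,\dots,r_k)$ built from $S$ and the $S_i$'s.

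For the Gaussian ($t=0$) case, I would expand $\frac{1}{M\mu(v)}\operatorname{Tr}(P_S^v)$ as a sum over loops $w=e_1\cdots e_{2p}$ in $G$ based at $v$, weighted by $\sigma_B$, of products $\mathbb{E}[X_{e_1}\cdots X_{e_{2p}}]$. Wick's formula produces pair partitions $\pi$ of $\{1,\dots,2p\}$; only non-crossing (planar) pairings survive the $M\to\infty$ limit, by the standard 't~Hooft power counting for GUE-type Gaussian matrices of size $[\mu(s)M]\times[\mu(t)M]$ with covariances $(\sqrt{\mu(s)\mu(t)}M)^{-1}$. For each such non-crossing $\pi$, summing over the intermediate vertices in $V$ along the loop (each connected component of the complement of $\pi$ in the disk contributes a factor $\sum_{u}\mu(u)/\mu(v_0)$ for the appropriate boundary vertex $v_0$), the Perron--Frobenius eigenvector relation $\Gamma\mu=\delta\mu$ is precisely what collapses each closed loop into a factor $\delta$. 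Matching $\pi$ to a Temperley--Lieb tangle $Q$ with $S$ inserted at the outer face, this produces $\sum_Q \delta^{\#\text{loops}(Q\cup S)}=\mathscr{T}_0(S)$. The definition of the normalizing factors $\sigma_B$ in \eqref{eq:defOfSigmaE} is designed so that the $\mu$-factors coming from the matrix sizes combine cleanly with those from the covariances; this is where one needs to be careful bookkeeping.

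For general $t$, writing
\begin{equation*}
\mathbb{E}_{\mu_t^M}\!\left[\tfrac{1}{M\mu(v)}\operatorname{Tr}(P_S^v)\right]=\frac{1}{Z^M_t}\sum_{r_1,\dots,r_k}\prod_i\frac{t_i^{r_i}}{r_i!}M^{\sum r_i}\mathbb{E}_{\mu^M}\!\left[\tfrac{1}{M\mu(v)}\operatorname{Tr}(P_S^v)\prod_i\operatorname{Tr}(W_{S_i})^{r_i}\right],
\end{equation*}
each Gaussian expectation is again a sum over pair partitions. The $\mu$-weighting $W_{S_i}=\sum_{v_i\in V^{\pm}}\mu(v_i)P_{S_i}^{v_i}$ is precisely the one needed so that a pairing can be reinterpreted as a tangle in which $r_i$ inner disks carry copies of $S_i$ (the sum over $v_i$ becomes the sum over vertex labels attached to the inserted diagrams, and together with the covariance $\mu$-factors yields, after loop-collapsing, exactly the value $\delta^{\#\text{loops}}$ of the configuration). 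Dividing by $Z^M_t$ removes the contributions of configurations with at least one closed component detached from $S$, leaving the connected contributions indexed by $P(r_1,\dots,r_k)$ with $S$ marked. Matching combinatorial factors $M^{\sum r_i}/r_i!$ against the genus expansion, only the planar ($M^0$) terms survive the $M\to\infty$ limit, giving formally $\sum_{r_i}\prod_i\frac{t_i^{r_i}}{r_i!}\sum_{P\in P(r_1,\dots,r_k)}\delta^{\#\text{loops}(P)}=\mathscr{T}_t(S)$.

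The hard part is justifying the interchange of $\lim_{M\to\infty}$ and the sum over $(r_1,\dots,r_k)$, i.e.\ proving uniform-in-$M$ absolute convergence of the perturbative expansion for $|t_i|$ small. For this I would establish a priori bounds on $\mathbb{E}_{\mu^M}[|\operatorname{Tr}(P_S^v)\prod_i\operatorname{Tr}(W_{S_i})^{r_i}|]$ using Gaussian concentration and the standard estimate on Gaussian moments of traces of polynomials (non-planar contributions are suppressed by $M^{-2}$ but bounded in number by $(C\sum r_i)^{\sum r_i}$); since the cut-off introduced after \eqref{udelta} guarantees the Gibbs measure is well-defined, combining Laplace-type bounds on $Z_t^M$ with these moment estimates gives the required uniform control and legitimates passing to the limit term by term. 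This is the standard combinatorial/analytical strategy developed in \cite{guionnet-edouard:combRM}, adapted here to the bipartite-graph matrix model; the only novelty is the $\mu$-bookkeeping sketched above, which is exactly what converts the graph-loop expansion into the planar-algebra loop expansion defining $\mathscr{T}_t(S)$.
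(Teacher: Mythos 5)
Your strategy — perturbative Wick/'t Hooft expansion around the Gaussian measure, termwise identification with planar configurations, then a uniform-in-$M$ bound to legitimize interchanging $\lim_M$ with $\sum_{r_1,\dots,r_k}$ — is genuinely different from the paper's. The paper does not expand the exponential at all: it introduces the cutoff $K$ on $\|A\|_\infty$ so that the density of $\mu^{M,K}_t$ is strictly log-concave for small $t$, derives from this Brascamp--Lieb and concentration inequalities (\eqref{conc}), shows that every limit point of $\mu^{M,K}_t(\tr(A_w))$ satisfies the Schwinger--Dyson equation \eqref{SD1}, proves uniqueness of bounded solutions to that equation (Lemma~\ref{uniq}), and finally verifies diagrammatically (Lemma~\ref{picture-SD} and the proof of Proposition~\ref{propfinite}) that $\ourtrt$ is also a solution. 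The argument is entirely non-perturbative, and the authors warn explicitly that in the physics literature the cutoff is omitted ``leading sometimes to diverging integrals'' — that remark is aimed precisely at the kind of naive genus expansion you propose.

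The gap in your plan is the last step. Two issues. First, once the cutoff $\mathbf{1}_{\|A\|_\infty\le K}$ is present (and it is needed in general for $\mu^M_t$ to be well-defined when some $t_i>0$), the reference measure is no longer Gaussian, so Wick's theorem does not apply to the moments $\mathbb{E}_{\mu^{M,K}_0}\bigl[\Tr(P_S^v)\prod_i(M\Tr W_{S_i})^{r_i}\bigr]$; the indicator introduces correlations with all orders in $1/M$ and destroys the clean pair-partition structure. You cannot both keep the cutoff (for existence) and use Wick (for combinatorics). Second, even working formally without the cutoff, the claimed ``a priori bounds... $(C\sum r_i)^{\sum r_i}$ ... from the standard estimate on Gaussian moments of traces'' are not established with a $C$ uniform in $M$; controlling the full, all-genus Gaussian moment of a product of $\sum r_i$ traces uniformly in $M$ with a summable bound is exactly the open difficulty in putting the 't~Hooft expansion on rigorous footing in the multi-matrix case — this is why \cite{guionnet-edouard:combRM} (which the paper follows) switched to the Schwinger--Dyson route rather than Fubini. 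As written, your step ``combining Laplace-type bounds on $Z_t^M$ with these moment estimates gives the required uniform control'' asserts rather than proves the crucial estimate, and it is precisely the estimate that one does not know how to obtain by elementary means.
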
 
The same theorem holds for more general planar algebras (rather than the Temperley--Lieb one). This theorem is proved in section \ref{proofcocomain}. 
 
\subsection{Graded algebras and free Gibbs states}
We saw on page \pageref{freeProbabilityConnection} that the observable $\mathscr{T}$ has an interpretation as a linear functional on the set of polynomials of the form $P_T$, where $T$ is a Temperley--Lieb diagram.  It is not hard to see that such polynomials form a subalgebra of the algebra of all non-commutative polynomials.  In fact, $P_T \cdot P_S = P_{T\wedge S}$ if we define $T\wedge S$ as the disjoint union of $T$ and $S$ drawn so that their marked boundary points are at the top of their disks, and then inscribed into a larger disk.  It turns out that the operation $\wedge$ still makes sense whenever $T,S$ are elements of an arbitrary Jones planar algebra.  The resulting associative algebra is denoted by $Gr_0(\mathcal{P})$. \label{firstdefof:Gr0}  In consequence, the observable $\mathscr{T}$ can be viewed as a trace on  $Gr_0(\mathcal{P})$.

Remarkably, we show that this trace is also positive-definite (i.e., it is a tracial {\em state}): $\mathscr{T}_t(T\wedge T^*)\geq 0$ for all $T$.    Furthermore, the combinatorics of this trace is also governed by a kind of Schwinger--Dyson equation, which can be represented diagrammatically (see Lemma~\ref{picture-SD}).  The differential operators appearing in this equation are analogs of the free difference quotient and the cyclic derivative from Voiculescu's free probability theory \cite{dvv:cyclomorphy,dvv:entropysurvey,voiculescu:conjectureAboutPotentials}, and turn out to be important for subfactor theory (see e.g. \cite{curran-jones-shlyakht:enveloping}).

\subsection{Computations for certain models}
In the last part of the paper, we turn to two classical loop models,
and show that we can indeed use the matrix models
we have constructed to compute  partition functions
of the loop models.  Since in our random matrix models the fugacity $\delta$
can take its value in the set  $\{2\cos(\pi/p) : p\ge 3\}\cup[2,+\infty)$, this
allows us to determine these partition functions
for any fugacity by analyticity (see Lemma~\ref{lemana}).

The first model we
consider  is constructed with  Temperley--Lieb elements
 with  non nested strings 
and black inside (that is, depending
only on a cup shaded black inside
with the notations of \cite{GJS07})

\begin{center}
\begin{tikzpicture}
\draw[rounded corners] (0,-0.8) rectangle (4,0);
\draw[fill=gray] (0.4,0) .. controls (0.4,-0.6) and (1.2,-0.6) .. (1.2,0);
\draw[fill=gray] (1.6,0) .. controls (1.6,-0.6) and (2.4,-0.6) .. (2.4,0);
\draw[fill=gray] (2.8,0) .. controls (2.8,-0.6) and (3.6,-0.6) .. (3.6,0);
\vertexhw{a}(0.5,1);
\begin{scope}
\clip[draw,rounded corners] (3,0.6) -- ++(0.6,0) -- ++(0.3,0.5) -- ++(-0.3,0.5) -- ++(-0.6,0) -- ++(-0.3,-0.5) -- cycle;
\coordinate (b) at (3.3,1.1);
\draw[fill=gray] (3,0.6) .. controls (b) .. ++(0.6,0) ++(0.3,0.5) .. controls (b) .. ++(-0.3,0.5) ++(-0.6,0) .. controls (b) .. ++(-0.3,-0.5);
\end{scope}
\begin{scope}
\clip[draw,rounded corners] (1.6,1.5) -- ++(0.5,0) -- ++(0.35,0.35) -- ++(0,0.5) -- ++(-0.35,0.35) -- ++(-0.5,0) -- ++(-0.35,-0.35) -- ++(0,-0.5) -- cycle;
\coordinate (c) at (1.85,2.1);
\draw[fill=gray] (2.1,1.5) .. controls (c) .. ++(0.35,0.35) ++(0,0.5) .. controls (c) .. ++(-0.35,0.35) ++(-0.5,0) .. controls (c) .. ++(-0.35,-0.35) ++(0,-0.5) .. controls (c) .. ++(0.35,-0.35); 
\end{scope}
\path[connect] (0.4,0) -- (a.south west) .. controls (a) .. (a.south east) .. controls ++(0.3,-0.3) and ++(0.3,0.3) .. (a.north east) .. controls (a) .. (a.north west) .. controls ++(-0.3,0.3) and ++(-0.3,0.5) .. (1.6,2.7) .. controls (c) .. (1.25,2.35) .. controls ++(-0.3,0.2) and ++(-0.3,-0.2) .. (1.25,1.85) .. controls (c) .. (1.6,1.5) .. controls ++(-0.2,-0.3) and ++(0,0.5) .. (1.2,0);
\path[connect] (1.6,0) .. controls ++(0,0.5) and ++(0.2,-0.3) .. (2.1,1.5) .. controls (c) .. (2.45,1.85) -- (3,1.6) .. controls (b) .. (2.7,1.1) .. controls ++(-0.3,0) and ++(-0.2,-0.3) .. (3,0.6) .. controls (b) .. (3.6,0.6) to[bend left=15] (3.6,0) -- (2.8,0) .. controls ++(0,0.5) and ++(0,0.5) .. (2.4,0);
\path[connect] (3.9,1.1) .. controls ++(1.1,0.2) and ++(0.7,0.7) .. (2.1,2.7) .. controls (c) .. (2.45,2.35) .. controls ++(0.3,0.2) and ++(0.2,0.3) .. (3.6,1.6);
\end{tikzpicture}
\end{center}
In section  \ref{cupmodsec}, see Lemma \ref{cupmodlem},
we identify the law of cup (the element made with only one string 
and black inside)
 under  $\ourtrt$
as a probability measure minimizing a certain
entropy, whose Cauchy--Stieltjes functional
can be computed.

The second model we consider is the one mentioned earlier on, 
built on two Temperley--Lieb elements with two strings and opposite
shading $S_1=$\tikz[baseline=0]{\vertexvw{a}(0,0)} and 
$S_2=$\tikz[baseline=0]{\vertexvb{a}(0,0)}. We study the law of
cup  under $\ourtrt$:
\begin{center}
\begin{tikzpicture}
\draw[rounded corners] (0,-0.8) rectangle (4,0);
\draw[fill=gray] (0.4,0) .. controls (0.4,-0.6) and (1.2,-0.6) .. (1.2,0);
\draw[fill=gray] (1.6,0) .. controls (1.6,-0.6) and (2.4,-0.6) .. (2.4,0);
\draw[fill=gray] (2.8,0) .. controls (2.8,-0.6) and (3.6,-0.6) .. (3.6,0);
\vertexhw{a}(3.8,0.8)
\vertexhb{b}(3,2)
\vertexhb{c}(1,2)
\vertexvw{d}(0,0.8)
\path[connect] (0.4,0) .. controls ++(0.1,0.2) and ++(0.1,-0.3) .. (c.south east) .. controls (c) .. (c.south west) .. controls ++(-0.2,-0.2) and ++(0.3,-0.2) .. (d.south east) .. controls (d) .. (d.north east) .. controls ++(0.2,0.2) and ++(-0.2,0.2) .. (d.north west) .. controls (d) .. (d.south west) .. controls ++(-0.6,-0.2) and ++(-1.1,0.2) .. (c.north west) .. controls (c) .. (c.north east) .. controls ++(1.2,0.3) and ++(0.1,0.2) .. (1.2,0);
\path[connect] (3.6,0) .. controls ++(0.2,0.2) and ++(0.3,-0.3) .. (a.south east) .. controls (a) .. (a.south west) .. controls ++(-0.3,-0.3) and ++(-0.3,-0.3) .. (b.south west) .. controls (b) .. (b.south east) to (a.north west) .. controls (a) .. (a.north east) to[bend right] (b.north east) .. controls (b) .. (b.north west) .. controls ++(-0.2,0.2) and ++(0,0.3) .. (1.6,0) (2.4,0) .. controls ++(0,0.4) and ++(0,0.4) .. (2.8,0);
\end{tikzpicture}
\end{center}
By our construction, we can see this loop model as a limit of 
matrix models for any $\delta\in \{2\cos(\pi/p), p\ge 3\}\cup [2,+\infty)$.
We show in section \ref{doublecupsec} that this model 
is related to an auxiliary model which
shows up thanks to the Hubbard--Stratonovitch transformation, see Proposition \ref{prop:conv}. This auxiliary model also allows to compute 
the generating function of cup under $\ourtrt$.
 We then solve this auxiliary model based on the remark
that it depends only on the eigenvalues of the matrices involved. Therefore,
 standard large deviations techniques \cite{BAG97} can be used  and the
asymptotics of this model are described by a variational formula,
see Proposition \ref{propldp}. We finally study the optimizer of
this variational formula and show that its Stieltjes transform  can, up to a reparametrization,
be expressed as a ratio of theta functions, 
see Proposition \ref{ratiotheta}. We summarize below our main results 
on the Potts model.

\begin{theorem} Let $\ourtrt^\delta$ be the tracial state built on the
two \TL{} elements $S_1$ and $S_2$ with two strings and opposite
shading, $S_1$ having one unshaded region, see \eqref{defdbcup}. Assume that  $t=(t_1,t_2)$ are non negative and small enough.
\begin{itemize}
\item  There exists two unique  auxiliary probability measures $(\nu_-^t, \nu_+^t)$
on the real line  which minimize 
$$
\sum_{\e=\pm}\left(\frac{1}{2}\int x^2d\nu_\e(x)-\Sigma(\nu_\e)\right)
+\delta \iint \log|1+\sqrt{8t_1} x+\sqrt{8t_2}
y|d\nu_+(x)d\nu_-(y)$$
with $\Sigma$ the free entropy $\Sigma(\mu)=\iint \log|x-y|d\mu(x)d\mu(y)$.
\item 
Let $B_n$ be the shaded Temperley--Lieb element with $n$ non nested strings
and black shading inside and put  
$$C(\gamma,t)=\sum_{n\ge 0}\gamma^n\ourtrt(B_n).$$
The above series converges absolutely for $\gamma$ small enough. 

Let
 $M^t(z)=\int \sum_{n\ge 0} z^n x^n d\nu_+^t(x)$. Then,
  $\gamma^t(z)=\frac{\sqrt{8 t_1} z}{1-z^2 M^t(z)}$
is invertible on a neighborhood of
the origin, with inverse $z^t(\gamma)$, and we have on a neighborhood of the origin
$$C(\gamma,t)=\frac{ \alpha z^t(\gamma)}{\gamma} M^t( z^t(\gamma)).$$
\item
 The probability measures $(\nu_+^t,\nu_-^t)$ can also be described as follows. There exists numbers $a_1<a_2<b_1<b_2$ so that $\nu_+^t$ (resp. $\nu_-^t$)
is supported on $[-a_2/\sqrt{8t_1},-a_1/\sqrt{8t_1}]$ (resp. $[(b_1-1)/\sqrt{8t_2},(b_2-1)/\sqrt{8t_2}]$). Moreover  set
$$u(z)
:=\frac{i}{2}\sqrt{(b_1-a_1)(b_2-a_2)} 
\int_{b_2}^z \frac{dz'}{\sqrt{(z'-a_1)(z'-a_2)(z'-b_1)(z'-b_2)}}\,,$$
 let $z(u)$ be  its inverse, and set
$$\omega_+(u)=\int \frac{1}{z(u)+\sqrt{8t_1} x}d\nu_+^t(x)=\frac{1}{\sqrt{8t_1}}
M^t(\frac{z(u)}{\sqrt{8t_1}})\,.$$
Then, if $q$ is such that $\delta=q+q^{-1}$, $q=e^{i\pi\nu}$, we have
$$\omega_+(u)=\frac{1}{q-q^{-1}}[(\phi_+(u)-\phi_+(-u)) + R(z(u))]$$
with, if $\Theta$ is the theta function given by \eqref{theta}, 
$$\phi_+(u)=c_+ \frac{\Theta(u-u_\infty-2\nu K)}{\Theta(u-u_\infty)}
+c_- \frac{\Theta(u+u_\infty-2\nu K)}{\Theta(u+u_\infty)}$$
and
$$R(z)=\frac{2q}{1-q^2}\frac{z}{\sqrt{8t_1}} +\frac{q^2+1}{1-q^2} \frac{(z-1)}{\sqrt{8t_2}
}.$$
The constants $a_1,a_2,b_1,b_2,u_\infty,C_+,c_-,K$ are defined by implicit equations. If $\delta=2\cos(\frac{\pi}{p})$ for some $p\in \mathbb N$, $\nu$ is an integer
and $\omega_+$ satisfies an algebraic equation.
\end{itemize} 
\end{theorem}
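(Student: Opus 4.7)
The plan is to combine three ingredients already in play in the paper: the matrix-model representation given by Theorem \ref{cocomain} specialized to the planar algebra generated by $S_1,S_2$; the Hubbard--Stratonovich trick that linearizes the quartic potential in \eqref{udelta}; and the large-deviation results of \cite{BAG97} for spectral measures of Hermitian matrices. By Theorem \ref{cocomain}, for $\delta\in\{2\cos(\pi/p):p\ge 3\}\cup[2,\infty)$ the trace $\ourtrt^\delta(S)$ is the $M\to\infty$ limit of $\frac{1}{M\mu(v)}\mathbb E[\Tr(P_S^v(A))]$ under the Gibbs measure whose potential has a piece $t_1 W_{S_1}+t_2 W_{S_2}$. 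Since both potentials are sums of terms $\Tr((\sum_e \sigma(e) A_eA_e^*)^2)$ attached to vertices $v\in V_\pm$, I would introduce two auxiliary Hermitian matrices $H_+$ and $H_-$ (of size determined by the $\mu(v)M$'s) and write
\[
\exp\bigl(c\,\Tr(X^2)\bigr)=\int \exp\bigl(\sqrt{8c}\,\Tr(XH)-\tfrac12\Tr(H^2)\bigr)\,dH,
\]
applied blockwise for each $v$ with $X=\sum_{e:s(e)=v}\sigma(e)A_eA_e^*$ and $c\in\{t_1,t_2\}$. This turns the $A$-integral into a Gaussian one, linear in $A_eA_e^*$, which can be performed explicitly and yields a $\log\det$ of a linear combination of identity, $H_+$, $H_-$ (the integrand $\log|1+\sqrt{8t_1}x+\sqrt{8t_2}y|$ of the first bullet).

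After this integration, the effective law of $(H_+,H_-)$ is unitarily invariant and depends only on their eigenvalues. Standard large deviation theory of \cite{BAG97} then shows that the joint empirical spectral distributions $(\hat\nu_+^M,\hat\nu_-^M)$ concentrate, as $M\to\infty$, on the unique pair $(\nu_+^t,\nu_-^t)$ minimizing the functional in the first bullet, provided $t_1,t_2\ge 0$ are small enough (so that the interaction term is a small perturbation of the quadratic + log-Coulomb part, ensuring strict convexity of the functional on pairs of compactly supported measures; this yields uniqueness). This gives the first bullet. For the second bullet, expand $(1-z^2 M^t(z))^{-1}$ as a geometric series in products of moments of $\nu_+^t$; using the combinatorial interpretation of $\nu_+^t$ as the law under $\ourtrt$ of $\sqrt{8t_1}\sum_{e:s(e)=v}\sigma(e)A_eA_e^*$ coming from the HS substitution, each moment counts configurations built on cups of one shading, and the inversion of $\gamma^t(z)$ amounts to summing the geometric series of non-nested cups of both shadings. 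The formula $C(\gamma,t)=\alpha z^t(\gamma)M^t(z^t(\gamma))/\gamma$ then follows by matching the generating function $C$ to the implicit relation $\gamma=\gamma^t(z)$ near $\gamma=0$ via Lagrange inversion.

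For the third bullet I would write the Euler--Lagrange equations for the two-measure variational problem. Differentiating in $\nu_+$ and $\nu_-$ gives, for $x$ in the support of $\nu_\pm$,
\[
x-2\int\frac{d\nu_\pm(y)}{x-y}+\delta\int\frac{\sqrt{8t_1}\,\text{(or }\sqrt{8t_2})\,d\nu_\mp(y)}{1+\sqrt{8t_1}\,x+\sqrt{8t_2}\,y}=0.
\]
Introducing $\omega_\pm$ as in the statement, this translates into a coupled Riemann--Hilbert problem on two cuts, and the change of variable $z\mapsto 1+\sqrt{8t_1}x+\sqrt{8t_2}y$ shows that the natural spectral curve has genus 1 with branch points $a_1<a_2<b_1<b_2$. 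Uniformizing this elliptic curve by $u(z)$ (the incomplete elliptic integral of the first kind written in the statement) turns the Riemann--Hilbert problem into a jump problem for a meromorphic function on the torus $\mathbb C/(2K\mathbb Z+2iK'\mathbb Z)$. The $q$-factor coming from $\delta=q+q^{-1}$ encodes the monodromy $\phi_+(u+2\nu K)=q^2\phi_+(u)$ (because loops that cross a cut pick up a factor of $\delta$, which after diagonalization becomes $q^{\pm 1}$), and theta functions with shifted argument $u-u_\infty-2\nu K$ are the unique (up to the two constants $c_\pm$) meromorphic sections with the right quasi-periodicity and a simple pole at $\pm u_\infty$ (the preimages of $z=\infty$). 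The additional term $R(z)$ absorbs the required behaviour at $z=0$ and $z=\infty$ (normalisation of $\omega_+$, residue conditions) and is fixed by matching the leading asymptotics. The positions $a_i,b_i$ and the constants $u_\infty, c_\pm, K$ are then determined by the four implicit conditions: normalisation of $\nu_\pm$, connectedness of the spectrum (equivalent to an $A$-cycle integral being real), and matching the simple-pole residues of $\omega_+$ at the preimages of $z=\infty$.

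The main obstacle is the third bullet: verifying rigorously that the Riemann--Hilbert problem for $\omega_+$ has a (unique) solution in this theta-function form, and checking that the implicit equations for $a_1,a_2,b_1,b_2,u_\infty,c_\pm$ are non-degenerate for the range of $(t_1,t_2)$ under consideration. The degeneration at $\delta=2\cos(\pi/p)$ ($\nu=1/p$ rational) is expected to make $\omega_+$ algebraic because the monodromy becomes a root of unity, so $\phi_+$ reduces to a finite sum of rational functions of $z$ via the addition formulae for $\Theta$. The analyticity of $\ourtrt_t^\delta$ in $\delta$ (Lemma \ref{lemana}) then transports the formula to all $\delta$ in the claimed set.
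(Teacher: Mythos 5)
Your high-level plan matches the paper's: a Hubbard--Stratonovich linearisation of the quartic potentials, a large-deviation analysis of the resulting auxiliary eigenvalue model as in \cite{BAG97}, and then an elliptic-curve uniformisation of the saddle-point/Riemann--Hilbert problem culminating in the theta-function formula (Propositions~\ref{propldp}, \ref{prop:conv}, \ref{ratiotheta} in the text). However, there are three genuine gaps, two of them substantive.

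First, for the uniqueness in the first bullet you assert strict convexity ``because the interaction term is a small perturbation of the quadratic + log-Coulomb part'', but the paper's uniqueness argument does not work that way: the interaction $\delta\iint\log|1+\sqrt{8t_1}x+\sqrt{8t_2}y|\,d\nu_+d\nu_-$ is itself a log-Coulomb term between the two measures and is \emph{not} small relative to the single-measure entropies. The actual mechanism (proof of Proposition~\ref{propldp}) is that after a change of variables the functional $H(p_v)$ is a linear combination $-\sum_i\gamma_i\Sigma(\sum_u v_i(u)\mu(u)p_u)$ with $\gamma_i>0$ because the quadratic form $Q(x)=\sum_v x_v^2 - \sum_{e\in E_+}x_{s(e)}x_{t(e)}$ is positive definite when $\Gamma\subset\mathbb{A}_\infty$ (each vertex has degree at most two). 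Strict convexity then follows from the strict concavity of $\Sigma$ on fixed-mass measures. Your argument, which appeals only to smallness of $t$, would not by itself give strict convexity of the joint functional and would not pin down why the same argument fails, say, for graphs with higher vertex degree.

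Second, the identification you use to get the second bullet is incorrect as stated. You write that $\nu_+^t$ is ``the law under $\ourtrt$ of $\sqrt{8t_1}\sum_{e:s(e)=v}\sigma(e)A_eA_e^*$''; it is not. After the Hubbard--Stratonovich transform the measures $\nu_\pm^t$ are the limiting empirical spectral distributions of the auxiliary \emph{Gaussian} matrices $G_\pm$, not of $\sum A_eA_e^*$. The passage from the moments of $G_+$ to the generating function $C(\gamma,t)$ of cups $\ourtrt(B_n)$ is exactly the nontrivial combinatorial content of Proposition~\ref{prop:conv}: one has to introduce a refined generating function $C(p,n,\ell,k)$ counting mixed configurations with $n$ cups, $p$ external ``strips'', and $\ell,k$ internal vertices of each shading carrying one string and one strip, derive the gluing recursion
\begin{equation*}
C(p,n,\ell,k)=\sum_{p_1}\sum_{\ell_1,k_1}\binom{k}{k_1}\binom{\ell}{\ell_1}C(p_1,0,\ell_1,k_1)C(p-p_1-2,n,\ell-\ell_1,k-k_1)+\ell\,C(p-1,n+1,\ell-1,k),
\end{equation*}
and deduce the rational relation between $C(z,\gamma,\cdot,\cdot)$ and $C(z,0,\cdot,\cdot)$. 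A naive ``Lagrange inversion matching'' of $M^t(z)$ with $C(\gamma,t)$ skips this entirely; without the strip bookkeeping there is no reason the two series should be tied by that formula.

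Third, a smaller slip in the third bullet: the monodromy constraint is $\phi_\pm(u+2iK')=q^{\pm2}\phi_\pm(u)$ in the \emph{imaginary} period (equation~\eqref{twist}), not $\phi_+(u+2\nu K)=q^2\phi_+(u)$ as you wrote; the shift $2\nu K$ only appears inside the argument of $\Theta$ in the ansatz \eqref{generalsol}, not as a quasi-period. Beyond that, the paper's proof of uniqueness of the theta-ratio solution uses the elementary Liouville argument (consider $g=f'/f$ for the difference $f=\phi_+-\tilde\phi_+$), which you gesture at via ``Riemann--Roch'' but do not carry out; you also do not exclude the degenerate case $q^2=e^{-2\pi n K'/K}$ needed to make the $c_\pm$ well defined. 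These are the points to fill in.
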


Whereas the matrix model for the first model is well-known,
the matrix model for the second (a kind of $U(n)$ model) has only been solved in some
special cases in the literature, such as  
the $O(n)$ model \cite{DK88,Kos89,KS92,EK95,BE09} which
corresponds to the case where the shading is neglected, that is $t_1=t_2$. 
Moreover, matrix models are usually provided for $\delta$ integer, 
whereas our approach allows a general construction
of the matrix model for all the values of $\delta$ above,
a set which accumulates at $2$. For this specific model, one could a priori 
define a matrix model for all $\delta$ by using the auxiliary model which appears thanks to the Hubbard--Stratanovitch transformation of section \ref{doublecupsec}; this is however a rather indirect and specific approach whereas our matrix models apply in a much greater generality, in particular in situations where the model has not yet been computed.

Our
construction is closely related to Pasquier's \cite{Pas87}, though the latter
is in a slightly different context, namely that of
 statistical models on the square
lattice (whereas there is no underlying lattice in our construction;
it is in some sense random). See also \cite{Kos-ADE,Kos-ADEb,Kos-ADEc} for
another application of Pasquier's construction in the context of matrix models.
Moreover, the enumeration problem 
corresponding to our second matrix model was
recently considered in \cite{BBM09} in an equivalent formulation,
namely the Potts model; we shall comment on the exact relation
to our work in section 3.

\subsection{List of notations}
\begin{description}
\item[$\delta$]  Loop weight (fugacity), or the loop parameter (part of the data of a planar algebra).  To be consistent with established notation, we use $\delta$ and $n$ (see below) interchangeably (\SectMark\ref{firstdefof:delta}, p. \pageref{firstdefof:delta}). 
\item[$f$] Partition function of a loop model.
\item[$\Gamma=(V,E)$] A bipartite graph whose set of vertices is $V$ and set of edges is $E$.  The set $V$ is the disjoint union of the sets $V_+$ and $V_-$ of even and odd vertices (\SectMark\ref{firstdefof:bipartiteGraph}, p. \pageref{firstdefof:bipartiteGraph}).
\item[$Gr_0(\mathcal{P})$] The graded algebra associated to a planar algebra $\mathcal P$. In the Temperley--Lieb case, the elements of $Gr_0$ are Temperley--Lieb diagrams (drawn with all of their marked boundary points on the top, starting with the marked boundary point).  The multiplication is then obtained by drawing diagrams next to each other and embedding the resulting diagram into a disk.
(\SectMark\ref{firstdefof:Gr0}, p. \pageref{firstdefof:Gr0})
\item[$L^\pm$] Loops (closed paths) on a bi-partite graph; $+$ and $-$ indicates whether the loop starts at an even or an odd vertex (\SectMark\ref{firstdefof:bipartiteGraph}, p. \pageref{firstdefof:bipartiteGraph}).
\item[$\mu(v)$] The value of the Perron--Frobenius eigenvector at a vertex $v$. (\SectMark\ref{firstdefof:bipartiteGraph}, p. \pageref{firstdefof:bipartiteGraph})
\item[$\mu_t^M$, $\mu_t^{M,K}$] $\mu_t^M$ is a Gibbs measure from which our random matrix model (of size proportional to $M$) is sampled. (\SectMark\ref{firstdefof:gibbsMeasure}, p. \pageref{firstdefof:gibbsMeasure})
\item[$M$] Matrices in our random matrix model have blocks with sizes proportional to $M$; we are concerned with the large $M$ limit of the random matrix model.
\item[$n$] The loop weight (fugacity) of a loop model when it is an integer number; 
 the number of matrices or non-commutative variables. See $\delta$.
\item[$\mathcal P$] Planar algebra.  A reader unfamiliar with planar algebra may wish to concentrate on the Temperley--Lieb planar algebra $\TL$, which can be thought of as the linear span of shaded Temperley--Lieb diagrams. 
(\SectMark\ref{firstdefof:PA}, p. \pageref{firstdefof:PA})
\item[$P_k, P_{k,\pm}$]  The $k$-box space ($k$-th graded component) of a planar algebra. The choice of sign $\pm$ indicates shading.  For the case of the Temperley--Lieb alagebra, $P_k$ denotes all (shaded) Temperley--Lieb diagrams with $2k$ boundary points, and $P_{k,+}$ denotes those diagrams for which the marked boundary point is clockwise from an unshaded region. (\SectMark\ref{firstdefof:PA}, p. \pageref{firstdefof:PA})
\item[$\mathcal{P}^\Gamma$] The planar algebra associated to a bipartite graph $\Gamma$. (\SectMark\ref{firstdefof:bipartiteGraph}, p. \pageref{firstdefof:bipartiteGraph})
\item[$\sigma(e)$] The factor $\sigma(e)=\sqrt{\mu(t(e))/\mu(s(e))}$ associated to an edge $e$ of a bipartite graph.
\item[$\sigma_B$] For a planar algebra element $B\in\mathcal{P}$ and an embedding $i$ of $\mathcal{P}$ into a graph planar algebra $\mathcal{P}^\Gamma$, $\sigma_B(w)$ denotes the value of the image of $i(B)$ on the loop $w\in \mathcal{P}^\Gamma$.  In the case of a Temperley--Lieb diagram $B\in TL$, $\sigma_B(w)$ is given by \eqref{eq:defOfSigmaE} on p. \pageref{eq:defOfSigmaE}.
\item[$S,T$] Elements of a planar algebra; particular examples of these are shaded Temperley--Lieb diagrams.
\item[$s(e),t(e)$] For a oriented edge $e$, $s(e)$ is the starting point and $t(e)$ is the end-point. (\SectMark\ref{firstdefof:bipartiteGraph}, p. \pageref{firstdefof:bipartiteGraph}).  We sometimes write $s(w)$ for a loop $w\in L$ to indicate its starting vertex.
\item[$t$] One or more parameters (e.g., $t_1,\dots,t_k$).
\item[$\Tr(A)$] The unnormalized trace of an $L\times L$ matrix $A$ given by $\sum_{i=1}^L A_{ii}$. 
\item[$\Tr_0(S)(v)$] For an arbitrary planar algebra $\mathcal{P}$, the map $\mathscr{T}(S)$ given by the pairing $\langle S, \sum_{T\in\TL} T\rangle$ between $S$ and the sum of Temperley--Lieb diagrams gives rise to an element in $P_0^\pm$.  In the case of a graph planar algebra, $\mathscr{T}(S)$ is a function on the set of vertices, and $\Tr_0(S)(v)$ denotes the value of this function at a vertex $v$. (\SectMark\ref{firstdefof:Tr0}, p. \pageref{firstdefof:Tr0})
\item[$\tr(A)$] The normalized trace of an $L\times L$ matrix $A$ given by $\frac{1}{L}\Tr(A)$.
\item[$\mytau$] Linear functional obtained as limit of averages of traces with respect to a family Gibbs measures on matrices.
\item[$\tau$] A non-commutative law, i.e., a tracial linear state on the algebra of non-commutative polynomials.
\item[$\mathscr{T}$] An observable associated to a loop model; also a tracial linear functional on the algebra $Gr_0(\mathcal{P})$ associated to a planar algebra $\mathcal P$ (\SectMark\ref{sect:observables}, p. \pageref{sect:observables}).
\item[$\TL$] The Temperley--Lieb planar algebra; the same symbol also sometimes refers to the set of  all Temperley--Lieb diagrams. (\SectMark\ref{firstdefof:TL}, p. \pageref{firstdefof:TL}; \SectMark\ref{firstdefof:TLalg}, p. \pageref{firstdefof:TLalg})
\item[$V_\pm$] Vertices of a bipartite graph, $+$ indicates even and $-$ indicates odd vertices. (\SectMark\ref{firstdefof:bipartiteGraph}, p. \pageref{firstdefof:bipartiteGraph})
\item[$W$] The interaction term in our models; we assume that $W=\sum t_i W_i$ with $W_i$ elements of a planar algebra $\mathcal{P}$.

\item[$\Vert\cdot\Vert_\infty$] Operator norm.
\end{description}

\subsection*{Acknowledgements}
The authors are grateful to the referees of the paper for their helpful suggestions for improving the readability of the paper.

\section{From planar algebras to matrix models}\label{proofcocomain}
In this section, we introduce our  matrix
models and prove Theorem \ref{cocomain}.  The matrix models depend on
 a bipartite graph which shows up
in Jones's subfactor theory (see
\cite{JP10} and references therein).
We first recall the definition of a subfactor 
planar algebra and the construction of the
planar algebra from a bipartite graph. The example 
that the reader can keep in mind is the Temperley--Lieb algebra.

We then define a family of random matrices associated to a bipartite graph
whose adjacency matrix has eigenvalue 
$\delta$ corresponding to some Perron--Frobenius vector $\mu$.

We next consider the case of a finite graph
and introduce
the Gibbs measure associated to $\ourtrt $
 and prove Theorem \ref{cocomain}
for planar subalgebras of graph planar algebras.\
 Finally, we extend our construction
to certain infinite graphs. 

\subsection{Planar algebras} \label{firstdefof:PA}
We  first recall some generalities on planar algebras.  The reader is referred to \cite{jones:planar}
or \cite{GJS07} for a more extensive introduction.

Recall  \cite{jones:planar} that a {\em planar algebra}\/ is a collection of vector spaces $\mathcal{P}=\{P_k^\pm\}$ endowed with an action of {\em planar tangles}.

A planar tangle is a drawing consisting of an {\em output disk}\/ $D_0$ and some number of {\em input disks}\/ $D_1,\dots,D_k$ in the interior of $D$ ($k\geq 0$).  Each disk has an even number of marked boundary points.  On each disk, one of the boundary segments is marked and called the {\em initial segment}.  The boundary points are joined by strings drawn in the interior of $D_0$ and outside all $D_1,\dots, D_k$; in addition there may be some number of closed strings not connected to any of the $D_i$'s.  All of  the strings are non-crossing.  Lastly, some of the regions between the strings are supposed to be 
shaded, so that each string lies between a shaded and  an unshaded region.
Planar tangles can be composed by gluing the output disk of one tangle into an input disk of another tangle so as to match up the initial segments.  In doing so, one must ensure that the numbers of boundary points and the shadings match.

The main axiom of a planar algebra is the existence, for each tangle $T$  with disks $D_0,\dots, D_k$ as above, so that $D_j$ has $2b_j$ boundary points,
of a multilinear map $M_T: P^{\sigma_1}_{b_1}\times \dots \times P^{\sigma_n}_{b_k} \to P^{\sigma_0}_{b_0}$, where $\sigma_j=+$ if the initial segment of $D_j$ is adjacent to a white region, and $\sigma_j=-$ otherwise.  The maps $M_T$ are supposed to be compatible with the operation of composition of tangles and  invariant under isotopy. Moreover, the vector spaces $\{P_k^\pm\}$ are equipped with
an involution compatible with $M_T$ in the sense that $M_T(f^*)=M_{\phi(T)}(\phi\circ f)^*$ for any
orientation reversing diffeomorphism
$\phi$.

A {\em subfactor}\/ planar algebra is a planar algebra so that ${\rm dim}( P^{\sigma}_{0})=1$. As a consequence
we can define a sesquilinear form 
on each $P_{n}^{\pm}$
by
$$\langle A,B\rangle = 
\begin{array}{c}
\begin{tikzpicture}[ inner sep=2mm]
\draw[rounded corners] (0,0) rectangle (7,1.25);
\draw[line width=1.5pt,rounded corners] node (A) at (2.5,0.625) [rectangle,draw] {$\ A\  $};
\draw[line width=1.5pt,rounded corners] node (B) at (4.5,0.625) [rectangle,draw] {$B^*$};
\draw (node cs:name=A, angle=30) ..controls +(30:3mm) and +(150:3mm)  ..  (node cs:name=B, angle=150);
\draw (node cs:name=A, angle=-30) ..controls +(-30:3mm) and +(-150:3mm)  ..  (node cs:name=B, angle=210);
\draw (node cs:name=A, angle=10) ..controls +(10:3mm) and +(170:3mm)  ..  (node cs:name=B, angle=170);
\node at (3.5,0.625){$\vdots$};
\node at (2.2,0.2){$\ast$};
\node at (4.2,0.2){$\ast$};
\end{tikzpicture}
\end{array}
$$
where the outside region is shaded according to $\pm$.  We also require 
that $\langle \,,\,\rangle$ is positive definite and $M_{T_1}=M_{T_2}$
where $T_{1}$ and $T_{2}$ are the following
two 0-tangles:
$$
\begin{array}{c}
\begin{tikzpicture}
\draw[rounded corners](0,0) rectangle (3,2);
\draw[line width=1.5pt, rounded corners] node (A) at (1.5,1) [circle,draw] {$T$};
\draw(node cs:name=A, angle=180) .. controls +(180:4mm) and +(180:4mm) .. 
(1.5,1.8)  .. controls +(0:4mm) and +(0:4mm) .. (node cs:name=A,angle=0);
\node [above] at (A.west) {$\ast$};
\end{tikzpicture}
\end{array}
 = 
\begin{array}{c}
\begin{tikzpicture}
\draw[rounded corners](0,0) rectangle (3,2);
\draw[line width=1.5pt, rounded corners] node (A) at (1.5,1) [circle,draw] {$T$};
\draw(node cs:name=A, angle=180) .. controls +(180:4mm) and +(180:4mm) .. 
(1.5,0.2)  .. controls +(0:4mm) and +(0:4mm) .. (node cs:name=A,angle=0);
\node [above] at (A.west) {$\ast$};
\end{tikzpicture} 
\end{array}
$$

Once $P_{0,\pm}$ have been identified with the scalars
there is a canonical scalar $\delta$ \label{firstdefof:delta} associated with a subfactor
planar algebra with the property that the multilinear map associated
to any tangle containing a closed string is equal to $\delta$ times
the multilinear map of the same tangle with the closed string removed.
By positivity of the scalar product, $\delta$ has to be positive
 and in fact it is well-known that the possible
values of $\delta$ form the set $\{2\cos (\pi/p):p=3,4,5,\ldots\}\cup[2,\infty)$
\cite{jones:index}.

\subsubsection{Example 1: Temperley--Lieb algebra $\TL$} \label{firstdefof:TLalg}

It is not hard to see that the Temperley--Lieb algebra
 is a planar algebra.  Indeed, given a planar tangle $T$ and
 some elements $B_1,\dots,B_k\in \TL$ one can glue these elements 
into the input disks of $T$.  Next, one can remove all closed strings by replacing each closed string by the factor $\delta$. 
 What results is another $\TL$ tangle, which is the result of applying the map $M_T$ to $B_1,\dots,B_k$.
Clearly, $M_T$ is invariant by isotopy and $P_0^\pm$ has dimension one.  Finally, the canonical scalar product
is positive definite according to \cite{jones:planar}. One way to prove it is by verifying 
that the  map of \TL{}  into a graph planar algebra is a planar algebra map. It thus takes the canonical bilinear form on \TL{}  to the canonical bilinear form on a graph
planar algebra, where non-negativity can be verified directly.   We will write $\TL(\delta)$ when we wish to emphasize the loop parameter (fugacity) $\delta$.

\subsubsection{Example 2: The planar algebra of two stitched Temperley--Lieb algebras}
\label{subsec:coupledTL}
Later in the paper, we will use (in addition to the Temperley--Lieb algebra) the stitched planar algebra $\mathcal{P}=\TL(\delta_1)\copyrightsign\TL(\delta_2)$.
This will be needed 
in order to realize the so-called $O(n,m)$ model in physics.

The $n$-th graded component of $P_n$ of $\mathcal{P}$ is given by$$P_n^{\pm} = \bigoplus_{\pi} (TL\copyrightsign TL)_\pi^{\pm}$$
where the sum is over all partitions $\pi$ of $\{1,2,\dots,2n\}$ into two subsets of even sizes $2(p_\pi)$ and $2(q_\pi)$
and $(\TL\copyrightsign \TL)_\pi^\pm = \TL_{p_\pi}^\pm \otimes \TL_{q_\pi}^\pm$.  Graphically, one can view $P_n$ as the span
of the collection of isotopy classes of tangles
obtained by superimposing an arbitrary $TL$ tangle colored red with
an arbitrary $TL$ tangle colored black, in such a way that the strings
of the red and the black $TL$ tangles are allowed to only intersect
transversally, and so that the resulting tangle has a total of $2n$
boundary points (counted regardless of color). The partition $\pi$ then corresponds to the colorings of the boundary points of the resulting diagram.
We assume that the checkerboard coloring of the two 
superimposed $TL$ diagrams are retained and are independently superimposed.

 The isotopies need
not preserve the intersections of the red and black strings, but must preserve the partition $\pi$.
We also assume that one of the boundary regions is marked {}``first''
(it could be of either color). Two different isotopy classes of diagrams,
$T$ and $S$, are presented below (black strings are indicated by solid lines and 
red by dotted lines, and the four possible shadings of regions are indicated by 
$
\begin{tikzpicture}\draw(0,0) rectangle (0.6,0.3); \end{tikzpicture}, 
\begin{tikzpicture}\draw [fill=gray] (0,0) rectangle (0.6,0.3); \end{tikzpicture}, 
\begin{tikzpicture}\draw [pattern=crosshatch dots] (0,0) rectangle (0.6,0.3); \end{tikzpicture}, 
\begin{tikzpicture}\draw [fill=gray] (0,0) rectangle (0.6,0.3); \draw [pattern= crosshatch dots] (0,0) rectangle (0.6,0.3); 
\end{tikzpicture} 
$):

$$
T = 
\begin{array}{c}
\begin{tikzpicture}
\draw[rounded corners](0,0) rectangle (2,1.3);
\draw[fill=gray] (0.25,1.3) .. controls +(-90:3mm) and +(180:3mm) .. (0.5,0.65) 
	.. controls +(0:3mm) and +(-90:3mm) .. (0.75,1.3) -- cycle;
\draw[densely dotted,pattern= crosshatch dots] (1.25,1.3) .. controls +(-90:3mm) and +(180:3mm) .. (1.5,0.65) 
	.. controls +(0:3mm) and +(-90:3mm) .. (1.75,1.3) -- cycle;
\end{tikzpicture} 
\end{array}
=
\begin{array}{c}
\begin{tikzpicture}
\draw[rounded corners](0,0) rectangle (2,1.3);
\draw [fill=gray] (0.25,1.3) .. controls +(-90:3mm) and +(180:3mm) .. (1.5,0.45) 
	.. controls +(0:3mm) and +(-90:3mm) .. (0.75,1.3) -- cycle;
\draw[densely dotted,pattern= crosshatch dots] (1.25,1.3) .. controls +(-90:3mm) and +(180:3mm) .. (0.5,0.45) 
	.. controls +(0:3mm) and +(-90:3mm) .. (1.75,1.3) -- cycle;
\end{tikzpicture} 
\end{array}
\qquad 
S=
\begin{array}{c}
\begin{tikzpicture}
\draw[rounded corners](0,0) rectangle (2,1.3);
\draw [fill=gray] (0.25,1.3) .. controls +(-90:3mm) and +(180:3mm) .. (0.75,0.65) 
	.. controls +(0:3mm) and +(-90:3mm) .. (1.25,1.3);
\draw[pattern= crosshatch dots, densely dotted] (0.75,1.3) .. controls +(-90:3mm) and +(180:3mm) .. (1.25,0.65) 
	.. controls +(0:3mm) and +(-90:3mm) .. (1.75,1.3);
\end{tikzpicture} 
\end{array}
$$

The planar algebra structure of $P_n$ is defined as follows.  Given a planar tangle $T$ and diagrams $A_1,\dots,A_k$ in $\mathcal{P}$, the result $M_T(A_1,\dots,A_k)$ is obtained by gluing the diagrams $A_1,\dots,A_k$ into the input disks of $T$ and summing over all possible ways of extending the colorings and shadings of the $A_i$'s to the resulting tangle.  The construction of $\mathcal{P}$ is a particular case of a more general construction $\mathcal{P}_1\copyrightsign \mathcal{P}_2$, which is possible for any pair of planar algebras $\mathcal{P}_1, \mathcal{P}_2$.  This construction is presented in Appendix~\ref{sec:couplings}.

\subsection{On the planar algebra of a graph}
Following \cite{GJS07},
we shall use the construction of planar algebras 
from bipartite graphs, as introduced in \cite{jones:graphPlanarAlg}.
The key ingredient here is the fact that every subfactor planar algebra (in particular, $\TL$) embeds
into a graph planar algebra.  This makes it possible to ``coordinatize'' planar algebras.

We first fix notations.

Let $\Gamma=(V,E)$ be a bipartite \label{firstdefof:bipartiteGraph} graph with vertices $V=V_-\cup V_+$
so that any edge is either from $V_+$ to $V_-$ or $V_-$ to $V_+$.
We denote by $E_+$ (resp. by $E_-$) the set of (oriented) edges starting in $V_+$
(resp. $V_-$). Thus $E=E_+\cup E_-$.
We let  $\mu$
 be a fixed Perron--Frobenius eigenvector with eigenvalue
$\delta$ for the adjacency matrix of $\Gamma$.  The vector $\mu$ has positive
entries. If $e\in E$, we denote by $e^o$ the edge with opposite orientation. We  denote by $L$ the set of  loops
on $\Gamma$, $L^+$ (resp. $L^-$) the set of loops
starting in $V_+$ (resp. $V_-$) (so $L=L^+\cup L^-$). We denote by $L(v)$
the set of loops starting at $v\in V$. We finally let 
 $\mathcal{P}^{\Gamma}=\cup_{n,\pm}{P}_{n,\pm}^{\Gamma}$
where $P_{n,\pm}^{\Gamma}$ is  the vector space of bounded functions
on loops on $\Gamma$ of length $2n$ starting and ending in $V_{+}$
for the plus sign and $V_{-}$ for the minus sign. In the following,
$s(e)$ (resp. $t(e)$) is the starting (resp. target) vertex of an edge $e$ (note that several
edges can have the same starting and ending  points). 

\begin{example} Consider the graph with one vertex in $V_+$, one vertex in $V_-$ and $n$ edges between them.  In this case, $\delta=n$ is an integer, and the 
eigenvector $\mu$ is identically equal to $1$.   The $\TL$ algebra embeds into the planar algebra of this graph for integer fugacity $\delta$.
\end{example}

We next describe the action of planar tangles on $\mathcal{P}^\Gamma$.  Let $T$ be a planar tangle with $k$ input disks
and let $L_1,\dots,L_k$ be loops on $\Gamma$.  To define the planar algebra structure, we must exhibit 
 $M_T(L_1, \dots,L_k)$ as an element of the planar algebra, i.e., we must prescribe its value on a loop $L$.  The value 
$$M_T(L_1,\dots,L_k) (L) $$ is computed as follows.  First, label the marked points on the input disks of $T$ with the edges comprising the loops $L_1,\dots,L_k$, clockwise starting from the marked vertex (and the beginning of $L_j$). 
Next, label the marked points on the output disk of $T$ with the edges of $L$, clockwise starting from the marked vertex.  As a result, we obtain a {\em labeled tangle}, and we'll set $M_T(L_1,\dots,L_k) (L) $ equal to the value of this labeled tangle which we compute as follows.
First, we remove all closed loops in the tangle $T$ (let us say, $p$ loops total) and multiply $M_T(L_1,\dots,L_k)(L)$ by $\delta^p$. We'll again denote the tangle with removed interior loops by $T$.

Next, we isotope the tangle in such a way that each input disk becomes a rectangle whose top is horizontal, and so that all strings emanate from the top (in this way, the marked initial segment comprises the sides and bottom of the rectangle).
 Put $$\sigma(e)=\sqrt\frac{\mu(t(e))}{\mu(s(e))}, e\in E.$$  Then the value $M_T(L_1,\dots,L_k) (L) $ is zero unless each string connects points which are labeled by opposite edges.  Otherwise, 
\begin{equation} \label{eq:simplificationRules}
M_T(L_1,\dots,L_k) (L)= \prod_{\textrm{strings $s$}} \sigma(e_s)^{-\theta_s/\pi}
\end{equation}
where $e_s$ is the start of $s$ and $\theta_s = \int_s d\theta $ is the total winding angle of the string $s$.  Here $d\theta$ stands for the $1$-form $y dx - x dy$ on the coordinate plane.    (Note that the choice of which edge is 
selected as the start of a string $s$ is irrelevant: if $s'$ is the string $s$ traversed backwards, then we get a non-zero value for the tangle iff $e_{s'}=e_s^o$; note that $\sigma(e^o)=\sigma(e)^{-1}$ and $\theta_{s'} = -\theta_s$.)  

We note the identities (thick lines indicate an arbitrary number of parallel strings, $e$ and $f$
are two arbitrary edges, and $x,y,z,w$ are arbitrary paths on graphs so that $xey$ and $zfw$ are loops, and all planar algebra elements are arranged so that the marked initial segment is at top-left):
\begin{gather*}
\begin{array}{c}
\begin{tikzpicture} [ xxx/.style={
	rectangle,
		rounded corners,
		minimum size=12mm,
		line width = 1.5pt},
		manyStrings/.style={
			line width=5pt}]
\node (A) at (0,0)  [xxx,draw] {$$};
\draw [manyStrings] (node cs:name=A, angle=120) -- +(0,0.5);
\draw [manyStrings] (node cs:name=A, angle=60) -- +(0,0.5);
\draw (node cs:name=A, angle=100) .. controls (up:11mm)  and (up:11mm) .. (node cs:name=A, angle=80);
\node [below] at (node cs:name=A, angle=110) {$\scriptstyle e$};
\node [below] at (node cs:name=A, angle=70) {\raisebox{-4.5pt}[1pt][1pt]{$\scriptstyle f$}};
\end{tikzpicture}
\end{array}
= \delta_{e=f^o} \sigma(e)
\begin{array}{c}
\begin{tikzpicture} [ xxx/.style={
	rectangle,
		rounded corners,
		minimum size=12mm,
		line width = 1.5pt},
		manyStrings/.style={
			line width=5pt}]
\node (A) at (0,0)  [xxx,draw] {};
\draw [manyStrings] (node cs:name=A, angle=120) -- +(0,0.5);
\draw [manyStrings] (node cs:name=A, angle=60) -- +(0,0.5);
\end{tikzpicture}
\end{array} \\ 
\begin{array}{c}
\begin{tikzpicture} [ xxx/.style={
	rectangle,
		rounded corners,
		minimum size=12mm, 
		line width = 1.5pt},
		manyStrings/.style={
			line width=5pt}]
\node (A) at (0,0)  [xxx,draw] {};
\draw [manyStrings] (node cs:name=A, angle=120) -- +(0,0.5);
\node [below] at (A.120){$\scriptstyle x$};
\node [below] at (A.60) {$\scriptstyle y$};
\node (B) at (3,0) [xxx,draw]{};
\draw [manyStrings] (node cs:name=B, angle=60) -- +(0,0.5);
\node [below] at (B.120){$\scriptstyle z$};
\node [below] at (B.60){$\scriptstyle w$};
\draw (A.north) .. controls +(up:7mm) and +(up:7mm) .. (B.north);
\draw [manyStrings, rounded corners] (node cs:name=A, angle=60) -- ++(0,2mm)
	-- ++(7mm,0) -- ++(0,-18mm) -- ++(-20mm,0) -- ++(0, +21mm) ;
\draw [manyStrings, rounded corners] (node cs:name=B, angle=120) -- ++(0,2mm)
	-- ++(-7mm,0) -- ++(0,-18mm) -- ++(20mm,0) -- ++(0,+21mm);
\node [below] at (A.north) {$\scriptstyle e$};
\node [below] at (B.north) {$\scriptstyle f$};
\end{tikzpicture}
\end{array}
= \delta_{e=f^o}\sigma(e)\sigma(z)^{-2}  
 \begin{array}{c}
\begin{tikzpicture} [ xxx/.style={
	rectangle,
		rounded corners,
		minimum size=12mm,
		line width = 1.5pt},
		manyStrings/.style={
			line width=5pt}]
\node(A) at (0,0) [xxx,draw]{\hbox to24mm{\hfill}};
\draw [manyStrings] (A.140) -- +(0,5mm);
\node [below] at (A.140) {\raisebox{-0.0em}{${\scriptstyle x}$}};
\draw [manyStrings] (A.120) -- +(0,5mm);
\node [below] at (A.120) {\raisebox{-0.0em}{${\scriptstyle w}$}};
\draw[manyStrings](A.60) --+(0,5mm);
\node [below] at (A.60) {\raisebox{-0.00em}{${\scriptstyle z}$}};
\draw[manyStrings, rounded corners](A.40) -- ++(0,3mm) -- ++(0.8,0) -- ++(0,-1.8) -- ++(-3.1,0) -- ++(0,2.0);
\node [below] at (A.40) {\raisebox{-0.0em}{${\scriptstyle y}$}};
\end{tikzpicture}
\end{array}
\end{gather*} where $\sigma(z) = \prod \sigma(z_j)$ if $z=z_1\dots z_l$.
(The last operation can be though of as replacing the string connecting $e$ and $f$ by a ``tunnel'' joining the two planar algebra elements: {$$
\begin{array}{c}
\begin{tikzpicture} [ xxx/.style={
	rectangle,
		rounded corners,
		minimum size=12mm,
		line width = 1.5pt},
		manyStrings/.style={
			line width=5pt}]
\draw [xxx] (0,0) -- ++(0, 1.2) -- ++(0.6,0) -- ++(0, 1.2) -- ++ (3.8,0) 
-- ++(0,-1.2) -- ++(0.6,0) -- ++(0,-1.2) -- ++(-1.6,0) -- ++(0, 1.2) -- ++(0.6, 0) -- ++(0,0.9) -- ++(-3.0,0)
-- ++(0,-0.9) -- ++(0.6,0) -- ++(0,-1.2) -- cycle;
\draw [manyStrings] (0.3,1.2) -- +(0,1.6);
\node [below] at (0.3,1.2) {$\scriptstyle x$};
\draw  [manyStrings, rounded corners](1.3,1.2) -- ++(0,0.5) -- ++(0.6,0) -- ++(0, -2.0) -- ++(-2.3,0) -- ++(0,3.1);
\node [below] at (1.3,1.2) {$\scriptstyle y$};
\draw  [manyStrings, rounded corners](3.7,1.2) -- ++(0,0.5) -- ++(-0.6,0) -- ++(0,-2,0) -- ++(2.3,0) -- ++(0,3.1);
\node [below] at (3.7,1.2) {$\scriptstyle z$};
\draw  [manyStrings](4.7,1.2) -- +(0,1.6);
\node [below] at (4.7,1.2) {$\scriptstyle w$};
\end{tikzpicture}
\end{array} 
$$} followed by an isotopy ``straightening'' the resulting picture).

Using these operations, any labeled tangle can be simplified leaving only strings connecting the outer disk to inner disks.  Each of these remaining strings can be removed by contributing a certain multiplicative factor according to \eqref{eq:simplificationRules}.

As mentioned in the beginning of the section, we extensively make use of the following fact (see e.g. \cite{jones:graphPlanarAlg}):
\begin{proposition}
Let $\mathcal P$ be a subfactor planar algebra.  Then there exists a bipartite graph $\Gamma$ and a planar algebra embedding $i: \mathcal{P}\to\mathcal{P}^\Gamma$.  
\end{proposition}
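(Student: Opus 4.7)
The plan is to take $\Gamma$ to be the (bipartite) principal graph of the subfactor planar algebra $\mathcal{P}$. First I would equip each $P_{n,\pm}$ with the multiplication coming from the ``stacking'' tangle and the involution from the planar structure; positive-definiteness of $\langle\cdot,\cdot\rangle$ makes each $P_{n,\pm}$ into a finite-dimensional $C^*$-algebra. Adjoining a single through-string on the right yields a unital inclusion $P_{n,\pm}\hookrightarrow P_{n+1,\mp}$, whose Bratteli diagram, stabilized in the usual subfactor-theoretic sense, defines $\Gamma$: vertices in $V_+$ (resp.\ $V_-$) index isomorphism classes of minimal central projections of the algebras with $+$-shading (resp.\ $-$-shading), and edges encode inclusion multiplicities.

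Next I would invoke the Markov-trace calculation obtained by closing up diagrams: the resulting normalized trace satisfies $\operatorname{tr}(e_n x)=\delta^{-2}\operatorname{tr}(x)$ for $x\in P_{n-1,\pm}$, where $e_n$ is the Jones projection tangle. Standard dimension counting then forces the vector $\mu$, recording the values of this trace on minimal projections, to be a strictly positive Perron--Frobenius eigenvector of the adjacency matrix of $\Gamma$ with eigenvalue $\delta$, consistent with the restriction $\delta\in\{2\cos(\pi/p)\}\cup[2,\infty)$.

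Third, I would construct the map $i$ via the path model. A matrix-unit basis of $P_{n,\pm}$ is naturally indexed by pairs $(\xi,\eta)$ of length-$n$ paths on $\Gamma$ starting at the base vertex and sharing the same endpoint; concatenating $\xi$ with the reverse of $\eta$ produces a loop of length $2n$ in $L^\pm$, which is exactly an index for $P_{n,\pm}^\Gamma$. Define $i(T)$ to be the function on such loops whose value is the corresponding matrix coefficient of $T$, rescaled by the explicit $\mu$-dependent scalar needed so that the local rules \eqref{eq:simplificationRules} for $\mathcal{P}^\Gamma$ are obeyed.

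The main obstacle is verifying that $i$ intertwines the action of every planar tangle, i.e.\ $i\circ M_T = M_T\circ(i\otimes\cdots\otimes i)$. I would reduce this to a finite generating set of tangles --- multiplication, addition of a through-string, the Jones projections $e_n$, the conditional expectation, and the rotation --- since these generate the planar operad under composition; compatibility on an arbitrary tangle then follows by operadic induction. For each generator, the check is a direct computation in which the weights $\sigma(e)=\sqrt{\mu(t(e))/\mu(s(e))}$ appearing in \eqref{eq:simplificationRules} telescope against the $\mu$-normalization built into $i$, an identity ultimately enforced by the Perron--Frobenius relation $\sum_{e:s(e)=v}\mu(t(e))=\delta\,\mu(v)$. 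Finally, injectivity of $i$ is automatic: $i$ is trace-preserving by construction, and the trace on $\mathcal{P}$ is faithful by positive-definiteness of $\langle\cdot,\cdot\rangle$, so any $T$ with $i(T)=0$ satisfies $\langle T,T\rangle = \langle i(T),i(T)\rangle = 0$ and hence $T=0$.
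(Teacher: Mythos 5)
The paper does not actually prove this proposition: it is stated as a known fact and cited to Jones's graph planar algebra paper (\cite{jones:graphPlanarAlg}) and, earlier in the section, to Jones--Penneys (\cite{JP10}). So there is no in-paper argument to compare against; what follows is an assessment of your sketch on its own terms.

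Your overall route --- principal graph from the Bratteli diagram of the tower $P_{0,\pm}\subset P_{1,\mp}\subset\cdots$, Markov trace forcing $\mu$ to be a Perron--Frobenius eigenvector with eigenvalue $\delta$, path-model matrix units, reduction to a generating set of tangles, and injectivity from faithfulness of the trace --- is the standard one and is sound at a high level. The gap is in the definition of $i$ itself. The path model gives a matrix-unit basis of $P_{n,\pm}$ indexed by pairs of length-$n$ paths from the base vertex $*$, i.e.\ by loops of length $2n$ \emph{based at $*$}. But an element of $P_{n,\pm}^\Gamma$ is a function on \emph{all} loops of length $2n$ starting in $V_\pm$, and your prescription only assigns values on loops at $*$. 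The default (zero elsewhere) cannot be a planar algebra morphism: the unit of $\mathcal P$ must map to the constant function $1$ on $V_+$ (not the delta function at $*$), and the rotation tangle in $\mathcal P^\Gamma$ moves the base point of a loop to an adjacent vertex, so $\rho(i(T))$ is supported away from $*$ while $i(\rho(T))$ would again be supported at $*$. You can test this against the paper's own formulas: for $\mathcal P=\TL$ and $\Gamma=\mathbb A_m$, the image $w_B$ of a single cup given by \eqref{defwT}, \eqref{eq:defOfSigmaE} is nonzero on loops $ee^o$ at \emph{every} vertex of $V_+$.

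This is precisely where the real content of the embedding theorem lies. One must propagate the path-model values off $*$ by cabling: given a loop $w$ at an arbitrary vertex $v$, pad it by a path $*\to v$ on each end, read off the corresponding matrix coefficient at $*$, and prove independence of the choice of padding path using the Markov/Perron--Frobenius identity $\sum_{e:s(e)=v}\mu(t(e))=\delta\,\mu(v)$. Without this extension the intertwining with rotation that you flag as ``the main obstacle'' cannot even be stated, let alone verified; the telescoping of the weights $\sigma(e)$ that you invoke works only after $i(T)$ has been defined globally. In short: right skeleton, but the step you label as verification is actually preceded by the harder, omitted step of defining $i(T)$ on all loops.
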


Note that if $\mathcal{P}$ is embedded into $\mathcal{P}^\Gamma$, then any $B\in \mathcal{P}$ can be regarded as a function on the space of loops of $\Gamma$.  We denote by $\sigma_B(w)$ the value of this function on a loop $w\in L$.  Naturally, $\sigma_B(w)$ depends on the specific embedding of $\mathcal{P}$ (indeed, a fixed planar algebra $\mathcal{P}$ may admit embeddings into graph planar algebras $\mathcal{P}^\Gamma$ for many different $\Gamma$).

\subsubsection{Example 1 continued:  Temperley--Lieb algebra inside
the planar algebra of a bipartite graph}
A particular case of the planar algebra axiom is the natural embedding from $\TL(k,\pm)$ into
a linear span of loops, following \cite{GJS07}.   Indeed, Temperley--Lieb tangles are tangles with no input disks, and thus produce elements in 
any planar algebra.

Suppose that we are given a box $B$
with $2k$ boundary points (arranged so that all boundary points are
at the top and $*$ is at position $0$ from the top-left). Assume
also that there are $k$ non-crossing curves inside $B$ which connect
pairs of boundary points together. Let $\pi$ be the associated non-crossing
pairing. We let $L_B$ be the set of loops
in $\Gamma$ so that $w\in L_B$ iff $w=e_1\cdots e_{2k}$ 
with 

\begin{itemize}
\item
$e_n=e_\ell^o$ if $\{n,\ell\}$ is a block  of the partition 
$\pi$ (which is denoted $n\stackrel{\pi}{\sim}\ell$)
\item $s(e_1)\in V_+$ (resp. in $V_-$) if $B\in \TL(k,+)$ (resp.
$B\in \TL(k,-)$).
\end{itemize}
For $e\in E$, $\sigma(e):=\sqrt{\frac{\mu(t(e))}{\mu(s(e))}}$
and for
  $w\in L$, we define the weight
\begin{eqnarray}\label{eq:defOfSigmaE}
\sigma_{B}(w)&=&
\sigma(e_{i_1})\cdots\sigma(e_{i_n}) \textrm{ if }
e_{i_k}=e_{j_k}^{o}\textrm{ whenever }w\in L_B, \qquad i_k\stackrel{\pi}{\sim} j_k\textrm{ and }
\ i_k< j_k,\\ \nonumber
\sigma_B(w)&=&0\textrm{ if } w\notin L_B.
\end{eqnarray}

We then associate to the Temperley--Lieb tangle $B$ 
the element 
 $w_B\in \mathcal{P}^\Gamma$  given by
 \begin{equation}\label{defwT}w_B(L)= 
\sigma_B(L)
.\end{equation} 
Informally, $w_B = \sum_{w\in L} \sigma_B(w) w$, where we identify a loop $w\in L$ with the delta function supported on that loop (viewed as an element in $\mathcal{P}^\Gamma$).  Of course, this is only correct when the graph is finite, so that the expression for $w_B$ is a finite sum.
\subsubsection{Example 2 continued: The planar algebra of two stitched Temperley--Lieb algebras
realized inside a graph planar algebra}\label{CoupledTLInside}
Let $\mathcal{P} = \TL(\delta_1) \copyrightsign \TL(\delta_2)$ as in \SectMark\ref{subsec:coupledTL}.  We'll realize $\mathcal{P}$
inside a graph planar algebra.  Let $\Gamma_r, \Gamma_b$ be two graphs so that the associated planar algebras contain $\TL(\delta_r)$ and $\TL(\delta_b)$, respectively.  We can thus choose $\Gamma_x$ to be $\mathbb{A}_\infty$ if
$\delta_x \geq 2$ and otherwise $\mathbb{A}_n$ for some $n$ (related to $\delta_x$). By appendix 
\ref{sec:couplings}, $\mathcal{P}$ embeds into the graph planar algebra of $\Gamma$.

Let $\Gamma=\Gamma_{r}\times\Gamma_{b}$.
More precisely, the vertices of $\Gamma$ are pairs $(v_{r},v_{b})$
with $v_{x}\in\Gamma_{x}$, $x\in\{b,r\}$. The pair $(v_r, v_b)$ is {\em even}\/ iff either both $v_r,v_b$ are even or both 
are odd; the pair $(v_r,v_b)$ is odd otherwise.
The edges of $\Gamma$
are of two kinds: the \emph{red} edges, consisting of pairs $(e,v)$
with $e$ an edge in $\Gamma_{r}$ and $v$ a vertex in $\Gamma_{b}$;
this is an edge from $(s(e),v)$ to $(t(e),v)$; and \emph{black}
edges, consisting of pairs $(f,w)$ with $f$ an edge in $\Gamma_{b}$
and $w$ a vertex in $\Gamma_{r}$; this edge goes from $(w,s(f))$
to $(w,t(f))$. For an edge in $\Gamma$, $e=(f,w)$, put $u(e)=f$
(which is in $\Gamma_{r}$ or $\Gamma_{b}$ according to whether $e$
is red or black).  
Note that $\Gamma$ is a bi-partite graph, since each edge in $\Gamma$ changes the parity 
of one of the components of a vertex $(v_r,v_b)$.  By Appendix \ref{sec:couplings}, $\Gamma$ is the principal graph of $\mathcal{P}$.

Let $\mu$ be the Perron--Frobenius eigenvector for $\Gamma$, given
at a vertex $(v,w)$ by the product of the eigenvectors of $\Gamma_{r}$
and $\Gamma_{b}$. For $e$ and edge of $\Gamma$, put $\sigma(e)=(\mu(t(e))/\mu(s(e)))^{1/2}$.
Let $c(x)$ be the color of the $x$-th boundary point of $T$.

Let $T\in{P}_{n}$ be a diagram, and let $$R_T=\{(x,y):\textrm{boundary points }x\textrm{ and }y\textrm{ are connected in }T\}.$$  The embedding of $\mathcal{P}$ into 
the graph planar algebra of $\Gamma$ is given by sending $T$ to the function $f_T\in\mathcal{P}^{\Gamma}$ given by:\[
f_{T}(e_{1}\cdots e_{2n})=\prod_{\substack{(x,y)\in R_T\\ x<y}}\sigma(e)\delta_{e\ \textrm{has same color as }c(x)}\ \delta_{u(e_{x})=u(e_{y})^{o}},\]
for any loop $e_{1}\cdots e_{2n}$ in $\Gamma$. 

\subsection{The observables $\mathscr{T}_t$} \label{sect:observables} 
From now on, we shall fix a subfactor planar algebra $\mathcal{P}$ as well as an embedding of $\mathcal{P}$ into the planar algebra $\mathcal{P}^\Gamma$ associated to a bipartite graph $\Gamma$.  Moreover, we shall fix elements $B_1,\dots,B_k\in \mathcal{P}$.  

For $t_i, 1\le i\le k$, small enough real numbers and any $S\in \mathcal{P}$, consider the observable (see the discussion in \SectMark\ref{sect:ArtitraryLoops})
$$\mathscr{T}_t(S) = \sum_{r_1,\dots,r_k} \prod_{i=1}^k \frac{(t_i)^{r_i}}{r_i !} \sum_{T\in T(r_1,\dots,r_k)} M_T(S,\underbrace{B_1,\dots,B_1}_{r_1},\dots,
\underbrace{B_k,\dots,B_k}_{r_k})  
$$ where $T(r_1,\dots,r_k)$ is the set of all possible planar tangles (configurations) with $1+r_1+\cdots +r_k$ input disks and 
$M_T(S,B_1,\dots,B_1,\dots,B_k,\dots,B_k)$ denotes the value (in $\mathcal{P}_0$) of the tangle $T$ applied to $S$ and $r_1$ copies of $B_1$, $r_2$ copies of $B_2$, etc.  

Since we have assumed that $\mathcal{P}$ is a {\em subfactor} planar algebra, $\mathcal{P}_0\cong \mathbb{C}$ and so the value of $\mathscr{T}_t(S)$ is a complex number.  However, the expression for $\mathscr{T}_t(S)$ makes sense for any $S\in \mathcal{P}^\Gamma$.  In this case, $M_T(S,B_1,\dots,B_1,\dots,B_k,\dots,B_k)$ is valued in $\mathcal{P}^\Gamma_0$, i.e., it is a function on the set of vertices of $\Gamma$ (this function is constant if $S$ actually happens to be in $\mathcal{P}^\Gamma$).  

\subsection{Random matrices associated with a graph}
In the sequel, we fix a graph $\Gamma$
with an eigenvalue $\delta$ and a Perron--Frobenius
eigenvector $\mu$ as in the previous part.
For
$e\in E_+$,  $A_e$ is a
$[M_{s(e)}]\times [M_{t(e)}]$ matrix with i.i.d entries
with variance $(M_{s(e)}M_{t(e)})^{-\frac{1}{2}}$  for some integer
numbers $(M_v, v\in V)$ so that
$$\lim_{M\ra\infty}\frac{M_v}{M}=\mu(v).$$
We put $A_{e^o}=(A_e)^*$
and denote, for a word $w=e_1\cdots e_k$,
$A_w=A_{e_1}\cdots A_{e_k}$. $\mu^M$ is the law of $A=(A_e,e\in E)$ chosen independently except for the last constraint.  In order that $A_w$ is 
a square matrix, we shall assume that $w$ is a loop.  

In the particular case that $w$ is the trivial path (consisting of no edges) starting and ending at $v$, we will denote by $A_w$ the identity matrix of size $M_v\times M_v$. 

The center-valued trace $\Tr_{0}$ on $Gr_{0}\mathcal{P}^{\Gamma}$
is given by the equation, \label{firstdefof:Tr0} for $x=e_1\cdots e_{2k}$
 \[
\Tr_{0}(x)(v) =1_{s(e_1)=v}
\langle x,T_{k}\rangle
\]
with $T_{k}=\sum_{B\in \TL(k)}w_B$ the sum of all $TL$ diagrams
with $k$ strings and where for two loops  
$x,y$ on $\Gamma$, 
$\langle x,y\rangle = \delta_{x=y}$. Thus $\Tr_0 (x) $  is a complex-valued function on the set of vertices of the graph.

We have the following theorem from \cite[Proposition 2]{GJS07}
\begin{theorem}\label{beta0}
Let  $v\in V$ 
and  $w=e_1e_2\cdots e_k\in L(v)$. Then
\begin{equation}\label{nm}\Tr_0(w)(v)=
\lim_{M\ra\infty}
\int {\rm tr}(A_w) d\mu^M(A)
\end{equation}
where $\tr$ is the normalized trace on matrices, ${\rm tr}(R)=N^{-1}\sum_{i=1}^N R_{ii}$ if $R$ is a
$N\times N$ matrix.
In the case where we extend linearly $Tr_0$ and take
 $w=w_T$ for some subfactor planar algebra element 
$T$, $Tr_{0}(w_T)=C(T) 1$ is constant and  $C(T)=\ourtr0(T)$. 
\end{theorem}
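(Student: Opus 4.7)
The argument is a Gaussian Wick expansion with a genus/planarity count. Write $w = e_1 \cdots e_{2k}$ (since $w \in L(v)$ and $\Gamma$ is bipartite, its length is necessarily even). Then
$$\int \tr(A_w)\, d\mu^M(A) = \frac{1}{M_v}\sum_{i_0,\ldots,i_{2k-1}} \E\Bigl[\prod_{j=1}^{2k} (A_{e_j})_{i_{j-1} i_j}\Bigr], \qquad i_{2k}=i_0,$$
with $i_j$ ranging over $\{1,\ldots,M_{t(e_j)}\}$. By Wick's theorem the expectation is a sum over pair partitions $\pi$ of $\{1,\ldots,2k\}$, and the constraint $A_{e^o} = A_e^*$ together with the independence of distinct edges forces a pair $\{a,b\}$, $a<b$, to contribute only when $e_b = e_a^o$, in which case it carries variance $(M_{s(e_a)} M_{t(e_a)})^{-1/2}$ and Kronecker factors $\delta_{i_{a-1} = i_b}\,\delta_{i_a = i_{b-1}}$. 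This immediately restricts the surviving $\pi$ to those satisfying the edge-matching rule of Temperley--Lieb pairings.

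I would then pass to a fat-graph picture: view $w$ as a polygon with $2k$ sides and glue sides paired by $\pi$ (orientation reversing, consistent with $e_b = e_a^o$). The resulting oriented surface has Euler characteristic $2 - 2g(\pi)$, with $g(\pi) = 0$ exactly for non-crossing $\pi$, i.e. those in bijection with Temperley--Lieb diagrams $B$ such that $w \in L_B$. Executing the remaining index sums, each fat-graph vertex (an equivalence class of polygon corners) is labeled by a vertex of $\Gamma$ and contributes a factor $M_{v_c}$. Combining the $M_v^{-1}$, the $k$ variance factors $(M_s M_t)^{-1/2}$, and the vertex factors $M_{v_c}$, the total power of $M$ is $V(\pi) - k - 1 = -2 g(\pi)$. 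Therefore only planar ($g=0$) pairings survive the $M \to \infty$ limit.

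For a planar $\pi$ coming from a TL diagram $B$ with $w \in L_B$, using $M_{v_c}/M \to \mu(v_c)$ the limiting contribution reads
$$\frac{1}{\mu(v)} \prod_{c} \mu(v_c) \prod_{\{a,b\}\in \pi,\ a<b} (\mu(s(e_a))\mu(t(e_a)))^{-1/2}.$$
A direct accounting, using that each fat-graph corner sits between two consecutive edges of $w$ and that the shading alternates across each string, shows that this product equals $\sigma_B(w) = \prod_{i_k \stackrel{\pi}{\sim} j_k,\, i_k < j_k} \sigma(e_{i_k})$ from \eqref{eq:defOfSigmaE}. Summing over admissible $B$ gives
$$\lim_{M\to\infty} \int \tr(A_w)\, d\mu^M(A) = \sum_{\substack{B \in \TL(k) \\ w \in L_B}} \sigma_B(w) = \langle w, T_k\rangle = \Tr_0(w)(v).$$
For the second claim, if $w = w_T$ for $T \in \mathcal{P}$ embedded in $\mathcal{P}^\Gamma$, then $\Tr_0(w_T) \in \mathcal{P}_0^\Gamma$ in fact lies in the image of $\mathcal{P}_0 \cong \mathbb{C}$ and is therefore constant in $v$; uniqueness of the canonical normalized trace on a subfactor planar algebra identifies this constant with $\ourtr0(T)$.

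The main obstacle is the third paragraph: one must verify that the products of $\mu$-ratios produced by the face/vertex sums and by the variance factors assemble precisely into the $\sigma$-weight from \eqref{eq:defOfSigmaE}. This requires careful matching of the orientation convention $\sigma(e) = \sqrt{\mu(t(e))/\mu(s(e))}$ and the ``lower-index'' selection rule against the checkerboard parity of corners of the non-crossing pairing, which is ultimately where the Perron--Frobenius eigenvector structure of $\mathcal{P}^\Gamma$ enters decisively.
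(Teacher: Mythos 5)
Your proof is correct in outline but takes a genuinely different route from the paper. What you give is the classical Wick / 't~Hooft genus expansion: expand the moment as an index sum, apply Wick's theorem (noting that independence of distinct edges and the constraint $A_{e^o}=A_e^*$ force $e_b=e_a^o$ within each pair), glue the $2k$-gon along the pairing, and count powers of $M$ via the Euler characteristic so that only genus-zero, i.e.\ noncrossing, pairings survive. That is precisely the argument of \cite[Prop.~2]{GJS07}, which the paper quotes Theorem~\ref{beta0} from and then deliberately avoids: it says it will give a \emph{new} proof via the Schwinger--Dyson equation, namely show (through the concentration/tightness framework) that any limit point of $\int\tr(A_w)d\mu^M$ satisfies the SD equation, invoke uniqueness of its solution (Lemma~\ref{uniq} at $t=0$), and verify that $\Tr_0$, read diagrammatically, satisfies the same recursion (§\ Proof of Theorem~\ref{beta0} in the finite graph case, after Lemma~\ref{picture-SD}). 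What the paper's route buys is uniformity: the SD machinery is exactly what is needed when $t\neq0$ (Proposition~\ref{propfinite}), where Wick calculus is no longer directly available, and specializing that machinery to $t=0$ is essentially free. What your route buys is a more elementary, self-contained, topological picture for the free ($t=0$) case.

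Two small remarks. The $\mu$-weight accounting you flag as an obstacle does go through, and most cleanly by induction on the noncrossing pairing: pick an innermost adjacent pair $\{a,a{+}1\}$; gluing side $a$ to side $a{+}1$ collapses the corner at $t(e_a)$ into a singleton class contributing $\mu(t(e_a))$ and reduces to a strictly shorter loop, which gives
\begin{equation*}
\prod_{c}\mu(v_c)=\mu(v)\prod_{\substack{\{a,b\}\in\pi\\a<b}}\mu(t(e_a)),
\end{equation*}
exactly what is needed to recover $\sigma_B(w)$ from \eqref{eq:defOfSigmaE}; the ``shading alternates'' heuristic plays no role. Also, for the final identification $\Tr_0(w_T)=\ourtr0(T)\cdot 1$ you should not invoke ``uniqueness of the canonical normalized trace'': the constancy of $\Tr_0(w_T)$ over vertices and its value both follow directly from the fact that $\mathcal{P}\hookrightarrow\mathcal{P}^\Gamma$ is a planar-algebra morphism, so that the pairing of $T$ with $\sum_{B\in\TL(k)}B$ computed in $\mathcal{P}^\Gamma_0$ agrees with the corresponding scalar in $\mathcal{P}_0\cong\mathbb{C}$, which is by definition $\ourtr0(T)$.
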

We used above the notations of \eqref{defsigmaT} and \eqref{defwT}.
Note that in \cite{GJS07}, 
we had an additional dimension $N$ so that $X^{M,N}_e$ converges
to  free variables entries as $N$ goes to infinity.
 This is however not needed 
since $M$ goes to infinity.

We shall give a new proof of Theorem \ref{beta0}
based on the so-called Schwinger--Dyson equation and obtain its generalization to the case of non-trivial interaction ($t\neq 0$):

\begin{theorem}\label{beta}Assume that the subfactor planar algebra is the Temperley--Lieb algebra, or is a planar subalgebra of a graph planar algebra of a finite graph (this is the case, for example, if it is finite-depth).
For $t =(t_1,\cdots,t_k)\in{\mathbb R}^k$ small enough,
there exists a probability measure $\mu_t^M$ on the set of matrices  indexed by $e\in E$
with dimensions $M_{s(e)}\times M_{t(e)}$ 
so that for 
  $v\in V$ 
and  $w=e_1e_2\cdots e_k\in L(v)$, there exists a limit
\begin{equation}\label{nm1}
\mytau_t (X_w)(v)=\lim_{M\ra\infty}
\int {\rm tr}(A_w) d\mu_t^M (A).
\end{equation}
where $X_w$ denotes the monomial $X_w=X_{e_1}\cdots X_{e_k}$.
In the case where $w=w_T$ for some Temperley--Lieb diagram $T$ (or, more generally, $T$ comes from a subfactor planar algebra inside 
$\mathcal{P}^\Gamma$), with $P^v_T=\sum_{w\in L(v)} \sigma_T(w) X_w$, 
$\mytau_{t}(P^v_{T})(v)=C_t(T)$  is constant for all $v\in V_+$ (resp. $V_-$)
if the marked point of $T$ is in an unshaded (resp. shaded)  region. Moreover,
  $C_t(T)=\ourtrt(T)$ is
 given by \eqref{trbeta}.
\end{theorem}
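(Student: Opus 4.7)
The plan is to define the Gibbs measure
\begin{equation*}
d\mu_t^M(A) = \frac{1}{Z_t^M} \exp\Bigl(M \Tr \sum_{k} t_k W_{S_k}(A)\Bigr)\, d\mu^M(A),
\end{equation*}
where each interaction term is $W_{S_k} = \sum_{v \in V_{\epsilon_k}} \mu(v) P_{S_k}^v$, with $\epsilon_k=+$ (resp.\ $-$) when the marked boundary point of $S_k$ lies in an unshaded (resp.\ shaded) region. For $t$ in a small neighborhood of the origin, and after adding (if necessary) a quadratic cutoff keeping matrix norms bounded in the spirit of \SectMark\ref{pottsmatrixmodel}, this is a bona fide probability measure.

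To prove \eqref{nm1}, I would first expand the exponential as a power series in $t$:
\begin{equation*}
\int \tr(A_w)\, d\mu_t^M(A) = \frac{1}{Z_t^M} \sum_{r_1,\dots,r_k\ge 0} \prod_{i=1}^k \frac{M^{r_i} t_i^{r_i}}{r_i!} \int \tr(A_w) \prod_{i=1}^k \Tr\bigl(W_{S_i}(A)\bigr)^{r_i}\, d\mu^M(A).
\end{equation*}
Expanding each $W_{S_i}$ linearly over its defining sum of monomials $X_{w'}$ reduces each Gaussian expectation to an expectation of products of traces of words in the $A_e$. This is computed by Theorem~\ref{beta0} extended (via Wick's formula) from a single loop to a product of loops: the expectation is a sum over pair-matchings of the edge-labels where each pair must be of the form $\{e,e^o\}$, and each planar matching contributes the product of the corresponding $\sigma$-factors according to the simplification rules \eqref{eq:simplificationRules}. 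Each planar pair-matching is precisely a labeled planar tangle with $\tr(A_w)$ in the output disk and the $W_{S_i}$ monomials in input disks; the sum reproduces $T(r_1,\dots,r_k)$ in the definition of $\mathscr{T}_t$, while non-planar pairings are suppressed by $O(M^{-2})$. The weights $\mu(v)$ in $W_{S_i}$, combined with the Gaussian covariance $(M_{s(e)}M_{t(e)})^{-1/2}$, are calibrated so that each closed loop in the resulting tangle contributes exactly a factor $\delta$, giving $\delta^{\#\text{loops}}$ per configuration. Dividing by $Z_t^M$ removes disconnected vacuum contributions by the usual connected-diagram argument.

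When $w = w_T$ with $T$ a Temperley--Lieb element (or more generally an element of a subfactor planar algebra embedded in $\mathcal{P}^\Gamma$), the resulting expansion is exactly the one defining $\mathscr{T}_t(T)$, now interpreted through the planar algebra action $M_T$: inserting $P_T^v$ in the output places $T$ into the output disk of each tangle. Since any $0$-box tangle value lies in $\mathcal{P}_0^\pm \cong \mathbb{C}$, the result is independent of $v$ within the appropriate bipartition class, so $C_t(T)=\ourtrt(T)$ as given by \eqref{trbeta}.

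The main obstacle is to turn this formal expansion into a rigorous statement of convergence: one must show that the series in $t$ converges absolutely and uniformly in $M$, so that $M\to\infty$ and summation over $(r_1,\dots,r_k)$ can be interchanged. This rests on (i) uniform-in-$M$ operator-norm estimates for the $A_e$ under $\mu_t^M$ valid for $t$ small (so that all moments are controlled), and (ii) an exponential bound, in the total number $r_1+\cdots+r_k$ of input disks, on the number of planar tangles in $T(r_1,\dots,r_k)$. An alternative (and in our view cleaner) route is to derive the master Schwinger--Dyson equation satisfied by any limit point of the normalized moments and invoke its unique solvability in a neighborhood of $t=0$, in the spirit of \cite{guionnet-edouard:combRM}; uniqueness then pins the limit down and the combinatorial expansion above identifies it with $\mathscr{T}_t$.
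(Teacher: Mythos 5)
Your final paragraph actually names the paper's real proof: the authors do \emph{not} attempt to justify a perturbative expansion of the Gibbs average; they prove convergence via the Schwinger--Dyson route. Concretely, after introducing the cutoff $K$ and checking strict log-concavity of the density of $\mu^{M,K}_t$ for small $t$ (see \eqref{conve}), they obtain concentration and Brascamp--Lieb bounds, show that every limit point of $\mu^{M,K}_t(\tr(A_w))$ obeys the Schwinger--Dyson relation \eqref{SD1}, prove uniqueness of the (suitably bounded) solution of that relation (Lemma \ref{uniq}), and then verify that $\mathscr{T}_t$ also satisfies the same diagrammatic recursion (Lemma \ref{picture-SD} and the proof of Proposition \ref{propfinite}(c)). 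Equality $\mytau_t=\ourtrt$ then follows without ever summing a perturbative series. The independence of $C_t(T)$ of the base point $v$ is in turn a planar-algebra fact (part (d) of Proposition \ref{propfinite}): once $W$ and $T$ lie in a subfactor planar subalgebra, the partial sums of $\ourtrt(T)(v)$ land in $\mathcal{P}_0\cong\mathbb{C}$, the set of $v$-constant functions.

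By contrast, the route you took as primary --- expand $\exp(M\Tr\sum t_i W_{S_i})$, compute Gaussian moments by Wick's formula, isolate planar pairings, divide by $Z^M_t$ to cancel disconnected vacuum terms, and then let $M\to\infty$ --- requires justifying two interchanges (series over $(r_1,\dots,r_k)$ with $\int d\mu^M$, and then with $M\to\infty$) that you explicitly defer. This is not a peripheral gap: proving that the Gibbs average is, for $t$ in a fixed real neighborhood of $0$, an analytic function of $t$ whose Taylor coefficients are the planar-map counts, \emph{is} the content of the theorem, and the straightforward dominated/monotone arguments do not apply because the entries of $\mu^M$-Gaussians are unbounded and the number of pairings grows factorially. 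In the paper this is exactly what the convexity$+$Schwinger--Dyson machinery is deployed to circumvent: convergence of moments is proved first (uniformly in $t$ small), and only afterwards (in the ``Free energy'' subsection) is the expansion in $t$ of the free energy deduced by differentiating in $t$ and using bounded convergence. So the two routes you describe are not of equal status; the first is formal, the second is the actual proof. It is also worth flagging one small discrepancy in your sketch: the cancellation of ``disconnected'' contributions by dividing by $Z^M_t$ identifies the connected 't Hooft diagrams only in the formal $1/M$ expansion, not at fixed $M$, so invoking the ``usual connected-diagram argument'' at fixed $M$ needs care even at the combinatorial level.
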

In the statement above, $\mytau_t$ is extended  by linearity and we used the notations of \eqref{defsigmaT} and \eqref{defwT}.
In the next subsections we shall prove this theorem.
\subsection{The case of finite graphs}

For $\delta=2\cos(\frac{\pi}{p})$, $p\ge 3$ (or, more generally, if the planar algebra under consideration is finite-depth), 
the graph $\Gamma$ can be chosen to be
finite.  This is the case for $\TL(\delta)$ if $\delta=2\cos(\frac{\pi}{n+1})$; the graph is then $\mathbb{A}_{n}$, the linear chain of $n$ 
vertices and $n-1$ edges.  Another such example is the case of the graph with two vertices joined by $n$ edges (see
e.g.~\cite[Examples 4.1 and 4.4]{jones:graphPlanarAlg}).
We study the finite-graph case first. 

\subsubsection{Definition of the matrix models} \label{firstdefof:gibbsMeasure}
Let $\mathcal{P}\subset \mathcal{P}^\Gamma$ be a subfactor planar algebra realized inside a graph planar algebra.
We consider the law $\mu^M$ of  $|E_+|$ independent 
$M_{s(e)}\times M_{t(e)}$  matrices with
i.i.d Gaussian entries 
with  variance $(M_{s(e)} M_{t(e)})^{-\frac{1}{2}}$. We denote by $\|R\|_\infty$
the spectral norm of a matrix $R$ (that
is, the spectral radius of $\sqrt{RR^*}$), and we consider
 $A=(A_e)_{e\in E_+}$, the collection of these
matrices and $\|A\|_\infty:=\max_{e\in E_+}\|A_e\|_\infty$. 
Let us assume that $\mathcal{P}\subset\mathcal{P}^\Gamma$ and $B_1,\dots,B_k\in \mathcal{P}$ are fixed.  To fix notations, let us write each $B_j$ as a linear combination of the basis for $\mathcal{P}^\Gamma$, consisting of all loops in the graph $\Gamma$: 
$$
B_j =   \sum_{w\in L}\sigma_{B_j}(w) w.
$$
(Note that if $\mathcal{P}=\TL$, then $\sigma_{B_j}(w)$ is exactly given by the formula \eqref{eq:defOfSigmaE}).  

For given real
numbers $t_1,\ldots,t_k$ and some $K>2$, we set
$$\mu^{M,K}_t(dA):=\frac{{\bf 1}_{\|A\|_\infty\le K}
}{Z^{M,K}_t} \exp\left\{M\sum_{i=1}^k t_i 
 \sum_{w\in L}\mu(s(w))\sigma_{B_i}(w)\Tr( A_{w})\right\}
\mu^M(dA),$$
where we denote by $s(w)$ the starting vertex of a loop $w\in L$.

\begin{example}\label{extangle}Consider the $\TL$ algebra
for $\delta \in \{2\cos(\frac{\pi}{p}), p\ge 3\}$.  Then $\TL$ can be embedded into the 
graph planar algebra $\mathcal{P}^{\Gamma}$ where 
$\Gamma = \mathbb{A}_n$.  We can consider the following potentials:
\noindent
(i) Let $B$ be the element of $\TL$ given by the tangle with only one string:
$$
\begin{tikzpicture} [ xxx/.style={
	rectangle,
		rounded corners,
		minimum size=12mm,
		line width = 1.5pt}]
\node (A) at (0,0) [xxx,draw] {};
\draw [fill=gray] (A.120) .. controls +(down:10mm) and +(down:10mm) .. (A.60) -- cycle;
\node (AA) at (0,0) [xxx,draw] {};
\end{tikzpicture}
$$
We can either see it as a cup with black inside
and white outside or with the opposite shading,
both leading to the same  potential in the matrix model :
\begin{eqnarray*}
\sum_{v\in V^+} \mu(v)
 \sum_{e: s(e)=v}\sigma(e)\Tr( A_{e}A_{e}^*)&=& {\rm Tr}
\left(\sum_{\substack{v\in V_+ \\
s(e)=v}} \sqrt{\mu(v)\mu(t(e))} A_{e}A_{e}^*\right)\\
&=& \sum_{v\in V^-} \mu(v)
 \sum_{e: s(e)=v}\sigma(e)\Tr( A_{e}A_{e}^*)\\
\end{eqnarray*}

\noindent
(ii) Let $B$ be the element of $\TL$ given by a tangle with two strings,
 two white regions and one black: $$
\begin{tikzpicture} [ xxx/.style={
	rectangle,
		rounded corners,
		minimum size=12mm,
		line width = 1.5pt}]
\node (A) at (0,0) [xxx,draw] {\hbox to2cm{\hfill}};
\draw [fill=gray] (A.140) .. controls +(down:10mm) and +(down:10mm) .. (A.40) -- cycle;
\draw [fill=white] (A.120) .. controls +(down:7mm) and +(down:7mm) .. (A.60) -- cycle;
\node (AA) at (0,0) [xxx,draw] {\hbox to2cm{\hfill}};
\end{tikzpicture}
$$
Then, it contributes to the potential by 
\begin{equation}\label{refdbcup}
\sum_{v\in V^+}\mu(v) \Tr\left[\sum_{\substack{e\\ s(e)=v}} \sum_{\substack{f\\ s(f)=t(e)}}
\sigma(e)\sigma(f) A_{e} A_f A_{f}^* A_{e}^*\right]=\sum_{v\in V^-}\mu(v)
\Tr\left[
\bigg(\sum_{\substack{e \\s(e)=v }}\sigma(e) A_eA_e^*\bigg)^2\right]\end{equation} 
where it is understood that 
products which make no sense
give no contribution. Inverting the shading amounts
to exchanging $V_+$ and $V_-$.
\end{example}

\subsubsection{The main result}
To state our main result, as before, we fix a planar subalgebra $\mathcal{P}$ of a graph planar algebra $\mathcal{P}^\Gamma$ and, for some $B_i \in \mathcal{P}$, let $W=\sum t_i B_i$, $t_i \in \mathbb{R}$.  We let $\mathscr{T}_t$ be the observable functional considered in \SectMark\ref{sect:observables}. 
\begin{proposition}\label{propfinite}
(a) Let $K>2$. Then there exists $t(K)>0$ so that whenever
$\max_{1\le i\le k}
|t_i|\le t(K)$, for any $v\in V$ and for any loop $w\in L(v)$ there exists
a limit
$$ 
\mytau_t (X_w)(v) =\lim_{M\ra\infty} 
\mu^{M,K}_t(\tr(A_{w})).$$  
\\
(b) The Schwinger-Dyson equation (described below in \SectMark\ref{sssec:SD}) has a unique solution $\tau_t$. \\
(c) For any element $w\in \mathcal{P}^\Gamma$, $\tau_t(w)=\mytau_t(A_w) = \ourtrt(w)$.\\
(d) Let $\mathcal{P}\subset\mathcal{P}^\Gamma$ be a subfactor planar subalgebra and assume that $W\in P$.  Then for any $Q=\sum_w \alpha_w w \in \mathcal{P}$ and any $v\in \Gamma$, 
$\tau_t(Q)(v) = \mytau_t(\sum_w \alpha_w A_w )(v)= \ourtrt(Q)(v)$ are constant independent of $v$. 
\end{proposition}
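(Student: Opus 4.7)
The plan is to follow the standard Schwinger--Dyson (SD) strategy from random matrix theory, adapted to the graph planar algebra setting. First, for part (a), I would apply Gaussian integration by parts to the cutoff Gibbs measure $\mu_t^{M,K}$. Differentiating the identity $\int \partial_{A_e}\bigl(A_{w'}\bigr)\, d\mu_t^{M,K} = $ (variance terms coming from the Gaussian part and interaction part) yields a finite-$M$ SD equation of the form
\[
\mu_t^{M,K}\bigl(\tr(A_w)\bigr) = (\text{quadratic in lower-order moments}) + O(1/M^2) + (\text{boundary terms from the cutoff}).
\]
The cutoff $\mathbf{1}_{\|A\|_\infty\le K}$ with $K>2$ is precisely the standard device (as in \cite{guionnet-edouard:combRM}) to make these boundary terms exponentially small in $M$ for small $t$, using large deviation estimates on the operator norm of the Gaussian matrices. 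Simultaneously, the cutoff yields the bound $|\mu_t^{M,K}(\tr(A_w))|\le K^{|w|}$, so the moments lie in a compact subset of the sequence space, and any subsequential limit $\tau_t$ satisfies the limiting SD equation.

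For (b), the SD equation can be rewritten as a fixed point equation $\tau_t = \Phi_t(\tau_t)$ on the Banach space of linear functionals on formal loops with $\|\tau\|_\rho:=\sup_w |\tau(w)|/\rho^{|w|}<\infty$ for an appropriate $\rho>K$. For $|t|$ small enough (depending on $K$, on the $\ell^1$-norms of the coefficients $\sigma_{B_i}$, and on the spectral radius of $\Gamma$), $\Phi_t$ is a $\rho$-Lipschitz contraction, giving uniqueness and analyticity in $t$. Combined with the compactness from (a), this forces a unique limit $\mytau_t(X_w)(v)$ and identifies it with $\tau_t$, proving both (a) and (b).

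For (c), I would show that the observable $\mathscr{T}_t$ satisfies the same SD equation. Since $\mathscr{T}_t$ is defined as a formal power series in $t$ over tangles, one can read off its combinatorial SD equation by analyzing what happens when one removes the first string emanating from the marked point of the outer loop (either it caps off, or it enters another vertex $B_i$ or the outer disk itself). This produces exactly the two types of terms—``free difference quotient'' and ``cyclic derivative''—that appear in the matrix SD equation at leading order in $1/M$. By uniqueness from (b), $\mathscr{T}_t$ and $\tau_t$ agree; convergence of the $\mathscr{T}_t$ series for small $t$ follows from the same contraction estimate. The case $t=0$ of Theorem~\ref{beta0} serves as the base case.

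For (d), the key point is that when $W\in\mathcal{P}$ and $Q\in\mathcal{P}$, the uniqueness of the SD solution combined with the planar-algebra compatibility of the construction (the embedding $\mathcal{P}\hookrightarrow\mathcal{P}^\Gamma$ is a planar algebra map) forces $\tau_t$ restricted to $\mathcal{P}$ to land in $\mathcal{P}_0^{\pm}\cong\mathbb{C}$. Concretely, the map $v\mapsto \tau_t(Q)(v)$, viewed as an element of $\mathcal{P}^\Gamma_0$, can be computed as $M_{T_0}(Q, B_1,\dots)$ for the appropriate summed tangle $T_0$, and since $M_{T_0}$ takes its values in $\mathcal{P}_0\cong\mathbb{C}$ when all inputs lie in $\mathcal{P}$, the result is a scalar independent of $v$. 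The identification with $\mathscr{T}_t(Q)$ then follows from (c).

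The main obstacle I expect is part (c): cleanly matching the combinatorial SD equation satisfied by $\mathscr{T}_t$ (summation over tangles) with the analytic SD equation coming from integration by parts on matrices. In particular, one must verify that the weight $\sigma_T(w)$ is tracked correctly under both operations, and that the factors $\mu(s(w))$ in the definition of the potential conspire precisely with the Perron--Frobenius weights appearing in the graph planar algebra identities \eqref{eq:simplificationRules} to reproduce the Temperley--Lieb simplification rules on the matrix side. A secondary technical nuisance is the rigorous control of the cutoff boundary terms; the estimate here is standard but must be done with care so that the error is uniform in the index $w$ of the test loop, not just pointwise.
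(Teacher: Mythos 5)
Your proposal follows essentially the same route as the paper: integration by parts plus concentration under the cutoff log-concave Gibbs measure to derive the limiting Schwinger--Dyson (SD) equation (parts (a)--(b)), uniqueness of its solution by a contraction estimate (your weighted-sup-norm Banach-space formulation is equivalent to the paper's generating-function difference estimate $\Delta(\gamma)=\sum_p\gamma^p\Delta(p)$ with $\gamma<1/K$), matching $\mathscr{T}_t$ to $\tau_t$ by showing the observable solves the same diagrammatic SD equation via the dichotomy ``first string caps off versus enters a copy of $W$'' (part (c), which the paper encapsulates in Lemma~\ref{picture-SD}), and using $W\in\mathcal P$ with $\mathcal P_0\cong\mathbb C\subset(\mathcal{P}^\Gamma)_0$ for the constancy in (d). The only point you underplay is the source of the concentration estimate: the paper derives both concentration and the operator-norm bound from strict log-concavity of the density (Brascamp--Lieb), not just from Gaussian large deviations, since the measure is non-Gaussian---but you already anticipate exactly this bookkeeping as the ``secondary technical nuisance.''
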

The convergence of the 
matrix model is a small generalization
of  \cite[Theorem 3.5]{guionnet-edouard:combRM} (where
only Hermitian random matrices where considered),
whereas the identification of the limit is based on the analysis 
of the so-called Schwinger--Dyson (or loop) equations.
Note that in most papers
in the physics literature,
 the cutoff $K<\infty$ is not taken, leading sometimes to 
diverging integrals. The advantage of
adding this cutoff is that all integrals are well defined and 
moreover for small $t_i$'s, the Gibbs measure $\mu^{M,K}_t$
has a strictly log-concave density, providing many
interesting properties which allow to put on 
a firm mathematical ground the above convergence.
In fact, we can remove the cutoff in case 
the density is strictly log-concave.
$K$ has to be chosen strictly greater than $2$
(which is greater to the edge of the support of the 
 Pastur--Marchenko law) so that
the limit does not depend on
it for $t$ small enough. We start by recalling these
properties and sketching the proof of \cite[Theorem 3.5]{guionnet-edouard:combRM}.
\subsubsection{Convexity assumption and consequences}
For a loop $w\in L$, let us put $s(w)$ for the starting vertex of the loop.
As in \cite{guionnet-edouard:combRM}, we shall  restrict ourselves to the case 
where $\mu^{M,K}_t$ has a strictly log-concave density, which amounts to assume that 
the map from  the set of $|E_+|$ Hermitian matrices %
into $\mathbb R$ given by
\begin{equation}
\label{conve}(A_e)_{e\in E_+}\ra -\sum_{i=1}^k t_i 
 \sum_w \mu(s(w)) \sigma_{B_i}(w)\Tr( A_{w}) +\frac{1}{2}\sum_{e\in E}
\mu({s(e)})\sigma(e) \Tr(
A_eA_e^*)
\end{equation}
is strictly  convex (with Hessian bounded below by $ c$ for some $c>0$ 
independent of $M$) on $\{
\|A\|_\infty \le K\}$. This is always true for $t$ sufficiently small,
depending on $K$, with $c$ going to  $m=\min\{\mu(s(e))\sigma(e)=\sqrt{\mu(t(e))\mu(s(e))}, e\in E_+\}>0$ as
$t$ goes to zero.

As a consequence of strict convexity,
 we have concentration 
inequalities under $\mu^{M,K}_t$, see \cite[Section 6.3]{alice:StFlour}
namely for any $w\in L$,
 \begin{equation}\label{conc}
\mu^{M,K}_t\left(\left|\tr(A_w)-\mu^{M,K}_t
\left(\tr(A_w)\right)\right|^2\right)
\le \frac{C(w,K)}{M^2}.\end{equation}
The family $$\{\mu^{M,K}_t(\tr(A_w)), w\in L\}
$$ is tight. We
will denote by $\{\mytau^K_t(X_w):w\in L\}$ a limit point.
We also have Brascamp-Lieb inequalities, see \cite[Section 6.5]{alice:StFlour}, and so
by comparison to the Gaussian law for which we know that
the spectral radius is bounded with overwhelming probability, we  can
prove that there exists $\ell(c)$ (which only depends on $c$) so that
$$\mu^{M,K}_t\left(\max_{e\in E} \| A_e\|_\infty \ge \ell (c)
\right)\le e^{-\delta(c)M}$$
for some $\delta(c)>0$. We assume that  we have chosen $K>\ell(c)$ so that the limit
point $\mytau^K_t$ will not depend on $K$.  We denote in short 
$$\mytau_t(e_1e_2\cdots e_k)=
\mytau^K_t(X_{e_1}X_{e_2}\cdots X_{e_k})$$
for  any path $e_1e_2\cdots e_k$ in $G$. By definition the above vanishes 
if $s(e_1)\neq t(e_k)$. 
We next show any limit point satisfies the so-called 
Schwinger--Dyson equation and that this equation has
a unique solution when the $t_i$'s are small.

\subsubsection{Schwinger--Dyson (or loop) equations}\label{sssec:SD}
Let us fix $e\in E$ and $P=u$ with $u$ a path from $t(e)$ to $s(e)$.
By using integration by parts and concentration inequalities, we obtain (see
\cite[Section 8.1]{alice:StFlour}
or  \cite[Theorem 3.1]{guionnet-edouard:combRM}), that
\begin{multline*}
\lim_{M\ra\infty}
	\bigg[ \mu^{M,K}_t\left(\frac{\Tr}{M}\right) \left( 
		A_u \bigg(\mu({s(e)}) \sigma(e) A_e
		- \sum_{i=1}^k t_i 
		\sum_{w\in L}\mu(s(w)) \sigma_{B_i}(w) 
			( D_{X_{e^{o}}}A_{w})\bigg) \right)\\
	-\mu^{M,K}_t\left(\frac{\Tr}{M}\right)\otimes \mu^{M,K}_t\left(\frac{\Tr}{M}\right)
	\left(\partial_{X_{e^{o}}} (u)(A) \right)\bigg] 
=0,\end{multline*}
where $\partial_{X_{e^{o}}}$ (resp.  $D_{X_{e^o}}$) is the non-commutative (resp. cyclic) derivative with respect to $X_{e^o}$. As before, we write $s(w)$ for the starting point of the loop $w$.

Recall that if $\omega$ is a path starting at a vertex $v$,  then $\mytau(\omega)$ is a limit point of $\mu(v)^{-1} \mu^M_t\frac{ \Tr}{M}(A_\omega)$, seen as a
linear form   on $ Gr_{0}\mathcal{P}^{\Gamma}$.  
Using this we see that $\mytau$ satisfies the following
Schwinger--Dyson equation: for every path $u$ from $t(e)$ to $s(e)$,
\begin{multline}\label{SD1}
\mu(t(e)) \mytau \left(
	u \bigg( \mu(s(e)) \sigma(e) \ e -  
		\sum_i t_i \sum_w \mu(s(w))\sigma_{B_i}(w) D_{e^{o}}(w) \bigg) \right)
		\\
-{\mu(s(e))\mu(t(e))} \mytau\otimes\mytau \left(\partial_{e^{o}} u\right)=0
\end{multline} 
where 
$$\partial_{e^o} w=\sum_{w=w_1e^o w_2} w_1\otimes w_2\quad
D_{e^o} w=\sum_{w=w_1e^o w_2} 1_{s(w_1)=t(w_2)}  w_2w_1 \,.
\label{eq:SD-allWords}$$ 
Moreover,  note that as $w\in L$,  $D_{e^o} w$ is a linear combination of paths starting at $s(e)$ and finishing at $t(e)$. Finally, considering the trivial loop $1_v$ at a vertex
$v$, we have that $\mytau(1_v)=1$ by construction.

Dividing \eqref{SD1} by $\mu(s(e))\mu(t(e))$ and then by $\sigma(e)$, we obtain the following equivalent equation:

\begin{equation}\label{SD1divided}
 \mytau(ue) = \mytau \left(u \bigg(
		\sum_i t_i \sum_w \sigma(e)^{-1} \frac{\mu(s(w))}{\mu(s(e))}
			\sigma_{B_i}(w) D_{e^{o}}(w) \bigg)  \right)
+ \sigma(e)^{-1} \mytau\otimes\mytau \left(\partial_{e^{o}} u\right)
\end{equation} 

We can use $\mytau$ to define a collection of linear maps from 
$\mathcal{P}^\Gamma$ with values in $\mathcal{P}^\Gamma$, which we will for now denote by $\mytau_{i,j}$.  

By definition, if $P\in \mathcal{P}^\Gamma$ is the delta function on the loop $a_1,a_2,\dots,a_{i},e_1,\dots,e_{j},b_1,\dots,b_{r}$, then $$\mytau_{i,j}(P)=\delta_{s(b_1)=t(a_{i})} 
\mytau ({e_1}\dots {e_j}) Q$$where $Q$ is the delta function supported on the loop 
$a_1\dots a_{i} b_1\dots b_{r}$. 

Then the Schwinger--Dyson equation is
an equation on these linear maps. 
We will use the following graphical notation for the result of $\mytau_{i,j}$ applied to the delta function supported at the loop $a_1,\dots,a_i,e_1,\cdots,e_j, b_1,\dots,b_t,\in \mathcal{P}^\Gamma$ (we suppress $i,j$ since they can be read off from the numbers of the various strings):
\begin{equation}\label{def-of-mu}
\begin{array}{c}
\begin{tikzpicture} [ xxx/.style={
	rectangle,
		rounded corners,
		minimum size=12mm,
		line width = 1.5pt}]
\node (A) at (0,0) [xxx,draw] {$\qquad\qquad\quad A\quad\qquad\qquad$};
\node [below] at (node cs:name=A, angle=160) {$\scriptstyle a_1$};
\node [above] at (node cs:name=A, angle=155) {$\scriptstyle \cdots$};
\node [below] at (node cs:name=A,angle=150) {$\scriptstyle a_i$};
\node [below] at (node cs:name=A, angle=20) {$\scriptstyle b_t$};
\node [above] at (node cs:name=A, angle=25) {$\scriptstyle \cdots$};
\node [below] at (node cs:name=A,angle=30) {$\scriptstyle b_1$};
\node [below] at (node cs:name=A, angle=125) {$\scriptstyle e_1$};
\node [below] at (node cs:name=A, angle=100) {$\scriptstyle e_2$};
\node [above] at (node cs:name=A,angle=75) {$\scriptstyle \cdots $};
\node [below] at (node cs:name=A,angle=55) {$\scriptstyle e_j$};
\node (Tr) at (0,2) [xxx,draw]{$\mytau$};
\foreach \ang in {160,150,20,30} {
	\draw (node cs:name=A, angle=\ang) -- +(0,1.5);
}
\foreach \ang in {125, 100, 55} {
	\draw (node cs:name=A, angle=\ang) .. controls +(up:0.2) and +(-90:0.2) .. (node cs:name=Tr, angle=-\ang);
}
\end{tikzpicture}
\end{array}
\end{equation}

We have the following lemma.
 
\begin{lemma}\label{picture-SD}
The Schwinger--Dyson equation 
 \eqref{SD1}  is equivalent to the following diagrammatic equation, which is required to hold whenever some arbitrary element of ${\mathcal P}^\Gamma$ is inserted into the blank boxes (same element to each blank box) and the strings at the bottom of each of the three pictures are connected to an arbitrary element of ${\mathcal P}^\Gamma$ (same element for each picture):
\begin{gather}\label{SDP}
\begin{array}{c}
\begin{tikzpicture} [ xxx/.style={
	rectangle,
		rounded corners,
		minimum size=12mm,
		line width = 1.5pt},
		manyStrings/.style={
			line width=5pt}]
\node (A) at (0,0) [xxx,draw]{$\quad\ \ \quad$};
\node (Tr) at (0,2) [xxx,draw]{$\quad\  \mytau\ \quad$};
\draw [manyStrings] (Tr.south) -- (A.north);
\draw [manyStrings] (A.south) -- +(0,-0.5);
%
%
\draw [rounded corners] (node cs:name=Tr,angle=-50) 
-- +(0,-0.5)
-- +(0.75,-0.5)
-- +(0.75,-2.3)
-- +(0,-2.3)
-- +(0,-2.5);
\end{tikzpicture}
\end{array}
= 
\sum_{
	i\textrm{ odd}
} {}
\begin{array}{c}
\begin{tikzpicture}  [ xxx/.style={
	rectangle, draw,
		rounded corners,
		minimum size=12mm,
		line width = 1.5pt},
		manyStrings/.style={
			line width=5pt}]
\node (A) at (0,0) [xxx]{$\qquad\qquad\qquad\qquad\qquad$};
\node (Tr) at (-1.5,2) [xxx] {$ \mytau$};
\node  (Trr) at (1.5,2) [xxx]{$\mytau $};
\draw [manyStrings] (A.south) -- +(0,-0.5);
\draw [manyStrings] (Tr.south) .. controls +(down:0.3) and +(up:0.3) .. 
	(node cs:name=A, angle=150);
\draw [manyStrings] (Trr.south) .. controls +(down:0.3) and +(up:0.3) .. 
	(node cs:name=A, angle=30);
\draw[rounded corners] (A.north) -- +(0,2.1) -- +(2.7,2.1)
-- +(2.7,-1.4) -- +(0.3,-1.4) -- +(0.3,-1.75);
\node [below] at (A.north) {$\circled{i}$};
\end{tikzpicture}
\end{array}  
+ \sum_{i \textrm{ even}}
\begin{array}{c}
\begin{tikzpicture}  [ xxx/.style={
	rectangle, draw,
		rounded corners,
		minimum size=12mm,
		line width = 1.5pt},
		manyStrings/.style={
			line width=5pt}]
\node (Tr) at (0,2) [xxx] {$\qquad \mytau\qquad$};
\node (A) at (-1,0) [xxx]  {};
\node (W) at (1,0) [xxx] {${W}$};
\draw [manyStrings] (A.north) .. controls +(up:0.4) and +(down:0.4) .. (node cs:name=Tr, angle=-120);
\draw [manyStrings] (A.south) -- +(0,-0.7);
\draw [manyStrings] (node cs:name=W,angle=120) .. controls +(up:0.4) and +(down:0.4) .. (node cs:name=Tr, angle=-60);
\draw [manyStrings, rounded corners] (node cs:name=W,angle=60) -- +(0,0.25) -- +(1.0,0.25) -- +(1.0,-1.50) -- (0,-0.85) -- (Tr.south);
\draw [rounded corners] (W.north) -- +(0,0.5) -- +(1.75,0.5) -- +(1.75,-1.73) -- (-0.7,-1.1) -- (-0.7,-1.325);
\node [below] at (W.north) {$\circled{i}$};
\end{tikzpicture}
\end{array}
\end{gather}where $W=\sum_i t_i B_i = \sum_i \sum_{w} \sigma_{B_i}(w) w$\end{lemma}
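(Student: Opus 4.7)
The plan is to unpack each diagram in \eqref{SDP} into an analytic expression in terms of sums over loops, using the definition \eqref{def-of-mu} of the linear maps $\mytau_{i,j}$, and then compare term by term with \eqref{SD1} (or with its rescaled version \eqref{SD1divided}). Because both equations are linear in an ``arbitrary element inserted in the blank box'' and in the ``arbitrary element attached to the bottom strings,'' it suffices to check the identity when both of these are delta functions supported on single loops of $\mathcal{P}^\Gamma$; by linearity this then extends to all of $\mathcal{P}^\Gamma$.

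With this convention I would first read off the left-hand side of \eqref{SDP}. The thick multi-string between the blank box and the $\mytau$-box feeds all but one of the strings of $u$ into $\mytau$, while the extra string that exits the side of $\mytau$ and curls down to join the bottom region corresponds to an additional labelled edge $e$. Combined with the simplification rule \eqref{eq:simplificationRules} for closing a string, the diagram evaluates to a multiple of $\mytau(u\cdot e)$, with the proportionality constant being the factor $\sigma(e)^{-1}$ coming from the winding of the added string, which matches the corresponding factor in \eqref{SD1divided}.

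Next I would analyse the right-hand side. The sum over odd $i$ corresponds to choosing a position $i$ in $u$ at which the labelled edge can be $e^{o}$; cutting the loop there produces two pieces $u_{1}$ and $u_{2}$ that are then fed into two independent $\mytau$-boxes, which is exactly the diagrammatic realization of $\sigma(e)^{-1}\mytau\otimes\mytau(\partial_{e^{o}}u)$. The sum over even $i$ is analogous, but applied to the potential $W=\sum_{j} t_{j} B_{j}=\sum_{j,w} t_{j}\sigma_{B_j}(w)w$: the $i$-th position of the loop $w$ is marked, the strings of $W$ are reconnected around it, and the resulting combination with $u$ under a single $\mytau$ reproduces the cyclic derivative contribution $\mytau\bigl(u\cdot \sum_{j}t_{j}\sum_{w}\sigma(e)^{-1}\mu(s(w))/\mu(s(e))\,\sigma_{B_j}(w) D_{e^{o}}w\bigr)$, the cyclicity being built in automatically by the way the strings of $W$ close back against $u$. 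The parity split odd/even is a shading book-keeping device enforcing that the position at which the split occurs carries an edge compatible with the shading of the external edge $e$, and is dictated by the checkerboard colouring on which side of the diagram the piece sits.

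The main (and really only) difficulty is bookkeeping of the scalar weights, which come from two sources: the factors $\sigma(e)=\sqrt{\mu(t(e))/\mu(s(e))}$ produced each time a string is straightened out via \eqref{eq:simplificationRules}, and the factors $\mu(s(w))\sigma_{B_j}(w)$ built into the Gibbs potential and hence into the $W$-box of the diagram. Once these are tracked, the diagrammatic identity \eqref{SDP} collapses, for each choice of $u$, $e$ and each position $i$, to exactly one instance of the term appearing in \eqref{SD1divided}; summing over $i$ and all $e$ compatible with the shading then recovers the full Schwinger--Dyson equation, while the converse direction follows because every instance of \eqref{SD1} can be drawn as one of the diagrams above. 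Thus \eqref{SDP} and \eqref{SD1} are equivalent.
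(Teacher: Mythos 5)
Your overall plan — reduce by linearity to delta functions on single loops, unpack each diagram in \eqref{SDP} via \eqref{def-of-mu}, \eqref{eq:simplificationRules} and \eqref{comp-with-boxes}, and match the result with the rescaled equation \eqref{SD1divided} — is precisely the route the paper takes. However, you flag the $\sigma$- and $\mu$-bookkeeping as ``the main difficulty'' and then assert without argument that ``once these are tracked, the identity collapses''; that bookkeeping is the actual content of the lemma, and your outline never carries it out. The crucial step you skip is in the even-$i$ sum: when the strings $g_{i+1},\dots,g_s$ of the $W$-box are rotated around to the left, each acquires a winding factor $\sigma(g_j)^2$ from \eqref{eq:simplificationRules}, and the resulting product telescopes, because $g_1\cdots g_s$ is a loop, to $\sigma(g_{i+1})^2\cdots\sigma(g_s)^2 = \mu(s(g_1))/\mu(t(g_i))$. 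It is exactly this telescoping that produces the ratio $\mu(s(w))/\mu(s(e))$ multiplying $D_{e^o}(w)$ in \eqref{SD1divided}. Without establishing this identity the match with the cyclic-derivative term is not proven. A smaller imprecision: you attribute a factor $\sigma(e)^{-1}$ to the left-hand side diagram. In fact the common factor on all three diagrams is $R=\delta_{f=g^o}\sigma(f)\,a_1\cdots a_n$ from closing the string between $g$ and $f$; the $\sigma(e)^{-1}$ arises only on the right-hand side (from whichever string of the blank box or of $W$ carries the label $e^o$ and is bent round to meet $e$), so that after cancelling $R$ one obtains \eqref{SD1divided} exactly, with $\sigma(e)^{-1}$ attached to the two right-hand terms and not to $\mytau(ue)$.
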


Here we use the following conventions.  Thick lines indicate an arbitrary number of strings.  Also, given $ 
\begin{array}{c}
\begin{tikzpicture} [ xxx/.style={
	rectangle,
		rounded corners, draw,
		minimum size=4mm,
		line width = 1.5pt},
		manyStrings/.style={
			line width=5pt}]
\node[xxx] (A) at (0,0){$\quad x\quad $};
\draw[manyStrings] (A.north) -- + (0,0.3);
\node[right] at (A.165){\scriptsize$\raisebox{-1.5em}{*}$};
\end{tikzpicture}
\end{array} \in \mathcal{P}^\Gamma$ we'll set
$
\begin{array}{c}
\begin{tikzpicture} [ xxx/.style={
	rectangle,
		rounded corners, draw,
		minimum size=8mm,
		line width = 1.5pt},
		manyStrings/.style={rounded corners,
			line width=5pt}]
\node[xxx] (A) at (0,0){$\quad x\quad $};
\draw[manyStrings] (A.south) -- +(0,-0.3);
\draw[manyStrings] (A.north) -- + (0,0.3);
\end{tikzpicture}
\end{array} =
\begin{array}{c}
\begin{tikzpicture} [ xxx/.style={
	rectangle,
		rounded corners, draw,
		minimum size=8mm,
		line width = 1.5pt},
		manyStrings/.style={rounded corners,
			line width=5pt}]
\node[xxx] (A) at (0,0){$\quad x\quad $};
\draw[manyStrings] (A.130) -- ++(0,0.2) -- ++(-0.8,0) -- ++(0,-1.2) -- ++(1.1,0) -- ++(0,-0.2) ;
\draw[manyStrings] (A.north) -- + (0,0.3);
\draw[manyStrings] (A.50) -- ++(0,0.2) -- ++(0.8,0) -- ++(0,-1.2) -- ++(-1.1,0) -- ++(0,-0.2) ;
\node[below] at (A.145){\scriptsize$\raisebox{-0.8em}{\ \ *}$};
\end{tikzpicture}
\end{array} 
$, where by convention there is an arbitrary fixed number (same in all diagrams) of strings between the marked initial segment of $x$ and the last string that is bent around $x$ on the left.

\begin{proof} Consider  $e,f,g,e_1,\dots,e_n\in E(\Gamma)$ so that the paths $e_1,e_2,\dots,e_k,g$ and $e,f$ form loops.  Consider the following planar operation
applied to $P=e_1,\dots,e_k,g$ and $e,f$ (more precisely, $P$ is the element of $\mathcal{P}^\Gamma$ which is the delta function on the loop $e_1,\dots,e_k,g$, etc.):
\begin{equation}\label{comp-with-boxes}
\begin{array}{c}
\begin{tikzpicture}  [ xxx/.style={
	rectangle, draw,
		rounded corners,
		minimum size=12mm,
		line width = 1.5pt},
		manyStrings/.style={
			line width=5pt}]
\node (A) at (0,0) [xxx]  {$\qquad\qquad\qquad\qquad$};
\node (B) at (3,0) [xxx] {$\quad$};
\foreach \ang in {150,110,70,30} {
	\draw (node cs:name=A, angle=\ang) -- +(0,0.5);
}
	\node [below] at (node cs:name=A,angle=150) {$\scriptstyle e_1$};
	\node [below] at (node cs:name=A,angle=120) {$\scriptstyle e_2$};
	\node [below] at (node cs:name=A,angle=60) {$\scriptstyle e_{k-1}$};
	\node [below] at (node cs:name=A,angle=30) {$\scriptstyle e_k$};
	\node [above] at (node cs:name=A,angle=90) {$\scriptstyle \cdots$};

\draw (B.north) -- +(0,0.5);
\draw[rounded corners] (A.south) -- +(0,-0.25) -- +(3,-0.25) -- (B.south);
\node [above] at (A.south) {$g$};
\node [above] at (B.south) {$f$};
\node [below] at (B.north) {$e$};
\end{tikzpicture}
\end{array}
= \delta_{f=g^o} \sigma(f)
\begin{array}{c}
\begin{tikzpicture}  [ xxx/.style={
	rectangle, draw,
		rounded corners,
		minimum size=12mm,
		line width = 1.5pt},
		manyStrings/.style={
			line width=5pt}]
\node (A) at (0,0) [xxx]  {$\qquad\qquad\qquad\qquad$};
\foreach \ang in {150,110,70,30,22} {
	\draw (node cs:name=A, angle=\ang) -- +(0,0.5);
}
	\node [below] at (node cs:name=A,angle=150) {$\scriptstyle e_1$};
	\node [below] at (node cs:name=A,angle=120) {$\scriptstyle e_2$};
	\node [below] at (node cs:name=A,angle=60) {$\scriptstyle e_{k-1}$};
	\node [below] at (node cs:name=A,angle=30) {$\scriptstyle e_k$};
		\node [below] at (node cs:name=A,angle=22) {$\scriptstyle e$};
	\node [above] at (node cs:name=A,angle=90) {$\scriptstyle \cdots$};
	\node at (0,-0.7) {};
\end{tikzpicture}
\end{array}
\end{equation}
Let $e_1\dots e_k\ g\ a_n\dots a_1$ and $e\ f$ be two loops in $\Gamma$.  Consider the following equation:
\begin{gather}\label{SDef}
\begin{array}{c}
\begin{tikzpicture} [ xxx/.style={
	rectangle,
		rounded corners,
		minimum size=12mm,
		line width = 1.5pt},
		manyStrings/.style={
			line width=5pt}]
\node (A) at (0,0) [xxx,draw]{$\qquad\qquad\qquad$};
\node [below] at (A.north) {$\scriptstyle{e_1\cdots e_k}$};
\node [above] at (node cs:name=A, angle=-130) {$\scriptstyle{a_1\cdots a_n}$};
\node (Tr) at (0,2) [xxx,draw]{$\mytau$};
\draw [manyStrings] (Tr.south) -- (A.north);
\draw [manyStrings] (node cs:name=A, angle=-130) -- +(0,-0.5);
%
%
\node (B) at (2,0) [xxx,draw]{$$};
\node [below] at (B.north) {$\scriptstyle e$};
\node [above] at (B.south) {$\scriptstyle f$};
\node [above] at (A.south) {$\scriptstyle g$};
\draw (node cs:name=Tr,angle=-60) .. controls +(down:0.3) and +(up:0.3) .. (B.north);
\draw [rounded corners] (A.south) -- +(0,-0.25) -- +(2,-0.25) -- (B.south);
\end{tikzpicture}
\end{array}
= 
\sum_{
	i\textrm{ odd}
} {}
\begin{array}{c}
\begin{tikzpicture}  [ xxx/.style={
	rectangle, draw,
		rounded corners,
		minimum size=12mm,
		line width = 1.5pt},
		manyStrings/.style={
			line width=5pt}]
\node (A) at (0,0) [xxx]{$\qquad\qquad\qquad\qquad\qquad\qquad$};
\node (Tr) at (-1.5,2) [xxx] {$ \mytau$};
\node  (Trr) at (1.5,2) [xxx]{$\mytau $};
\node (B) at (0.5,-1.7) [xxx]{};
\node[below] at (B.north){$\scriptstyle e$};
\node[above] at (B.south){$\scriptstyle f$};
\node [above] at (node cs:name=A, angle=-160) {$\scriptstyle a_1\cdots a_n$};
\draw [manyStrings] (node cs:name=A, angle=-160) -- +(0,-1.5);
\draw [manyStrings] (Tr.south) .. controls +(down:0.3) and +(up:0.3) .. 
	(node cs:name=A, angle=150);
\node [below] at (node cs:name=A, angle=150) {$\scriptstyle e_1\cdots e_{i-1}$};
\draw [manyStrings] (Trr.south) .. controls +(down:0.3) and +(up:0.3) .. 
	(node cs:name=A, angle=30);
\node [below] at (node cs:name=A, angle=30) {$\scriptstyle e_{i+1}\cdots e_k$};
\draw[rounded corners] (A.north) -- +(0,2.1) -- +(2.7,2.1)
-- +(2.7,-1.4) -- +(0.5,-1.4) -- (B.north);
\node [below] at (A.north) {$\scriptstyle e_{i}$};
\node [above] at (node cs:name=A, angle=-130) {$\scriptstyle g$};
\draw [rounded corners] (node cs:name=A, angle=-130) -- +(0,-2) -- +(1,-2) -- (B.south);
\end{tikzpicture}
\end{array} \\ \nonumber
\qquad\qquad + \sum_{i \textrm{ even}}
\begin{array}{c}
\begin{tikzpicture}  [ xxx/.style={
	rectangle, draw,
		rounded corners,
		minimum size=12mm,
		line width = 1.5pt},
		manyStrings/.style={
			line width=5pt}]
\node (Tr) at (0,2) [xxx] {$\qquad \mytau\qquad$};
\node (A) at (-1.2,0) [xxx]  {$\qquad\qquad$};
\node (W) at (2,0) [xxx] {$\quad\qquad W\qquad\quad $};
\node (B) at (-0,-2) [xxx] {};
\node [below] at (A.north){$\scriptstyle e_1\cdots e_k$};
\draw [manyStrings] (A.north) .. controls +(up:0.4) and +(down:0.4) .. (node cs:name=Tr, angle=-120);
\draw [manyStrings] (node cs:name=A, angle=-120) -- +(0,-2.2);
\node [above] at (node cs:name=A, angle=-120){$\scriptstyle a_1\cdots a_n$};
\draw [manyStrings] (node cs:name=W,angle=150) .. controls +(up:0.4) and +(down:0.4) .. (node cs:name=Tr, angle=-60);
\draw [manyStrings, rounded corners] (node cs:name=W,angle=30) -- +(0,0.25) -- +(0.7,0.25) -- +(0.7,-1.50) -- (0,-0.85) -- (Tr.south);
\draw [rounded corners] (W.north) -- +(0,0.5) -- +(2,0.5) -- +(2,-1.73) -- (-0,-1.1) -- (B.north);
\node [below] at (W.north) {$\scriptstyle \circled{i}$};
\node [below] at (B.north) {$\scriptstyle e$};
\node [above] at (B.south) {$\scriptstyle f$};
\node [above] at (node cs:name=A, angle=-70) {$\scriptstyle g$};
\draw [rounded corners] (node cs:name=A,angle=-70) -- +(0,-2.2) -- +(0.95,-2.2) -- (B.south);
\end{tikzpicture}
\end{array}
\end{gather}

It is easily seen (by noticing that $g$ and $f$ are arbitrary subject to $t(f)=s(g)$)  that \eqref{SDef} is equivalent to \eqref{SDP}.

Next, note that both sides of \eqref{SDef} 
are zero unless the region containing the point at infinity 
 in all diagrams is labeled
by $v=t(e)$ (so in particular $t(a_1)=s(a_n)=v$).    
Consider now the three diagrams comprising equation \eqref{SDef}. 
Put $$R=\delta_{f=g^o}\sigma(f) a_1\dots a_n.$$
The diagram in the upper-left corner is exactly
$$Q_1=\mytau(e_1\dots e_k e)R.$$
Consider now the  diagram in the upper-right corner.  The term in the summation is zero unless $e_i = e^o$. 
  In particular, the region to the right of the string emanating from $e_i$ can be labeled by $s(e)$
and the region to the right of the string emanating from $e_{i-1}$ 
can be labeled $t(e)$.  By definition of $\mytau_{ij}$, we then get that this diagram is equal to
$$ Q_2=\mytau\otimes\mytau \left(\sum_{i \textrm{ odd}} e_1 \dots e_{i-1}\otimes e_{i+1}\dots e_k \right) \sigma(e)^{-1}R.$$
Note that we can actually replace the sum over odd $i$ by the sum over all $i$, since $\mytau(e_1\dots e_{i-1})=0$ unless $i$ is odd.

 Assume first that $W$ is the delta function supported on the
 loop $g_1 \dots g_s$.   Let us now consider for $i$ even the diagram
  $$ D_i=\begin{array}{c}
\begin{tikzpicture} [ xxx/.style={
	rectangle,draw,
		rounded corners,
		minimum size=12mm,
		line width = 1.5pt},
		manyStrings/.style={rounded corners,
			line width=5pt}]
\node (W) at (0,0)[xxx] {$\qquad\qquad\quad  W\quad \qquad \qquad$};
\draw[manyStrings](W.150) -- +(0,0.5);
\draw[manyStrings](W.30) -- ++(0,0.5) -- ++(1.8,0) -- ++(0, -2) -- ++(-5.8,0) -- ++(0,2);
\draw(W.north) -- +(0,0.5);
\node[below] at (W.150){\scriptsize ${ g_1,\dots,g_{i-1}}$};
\node[below] at (W.30){\scriptsize $g_{i+1},\dots,g_{s}$};
\node[below] at (W.north){\scriptsize  $ g_i$};
 \end{tikzpicture}
 \end{array}
$$
Since all strings emanating from $g_{i+1},\dots,g_s$ make a $360^o$ rotation in the drawing,
 according to \eqref{eq:simplificationRules} we have  $$D_i = \sigma(g_{i+1})^2 \cdots \sigma(g_s)^2 D'_i$$
where $$ 
D'_i= \begin{array}{c}
\begin{tikzpicture} [ xxx/.style={
	rectangle,draw,
		rounded corners,
		minimum size=12mm,
		line width = 1.5pt},
		manyStrings/.style={rounded corners,
			line width=5pt}]
\node (W) at (0,0)[xxx] {$\qquad\qquad\quad  \ \ \ \quad \qquad \qquad$};
\draw[manyStrings](W.150) -- +(0,0.5);
\draw[manyStrings](W.40) -- +(0,0.5);
\draw (W.20) -- +(0,0.5);
\node[below] at (W.150){\scriptsize ${ g_{i+1},\dots,g_{s}}$};
\node[below] at (W.40){\scriptsize $g_{1},\dots,g_{i-1}$};
\node[below] at (W.20){\scriptsize  $ g_i$};
 \end{tikzpicture}
 \end{array}
 $$ 
 To see this, we can compare the results of glueing all strings of $D_i$ to some diagrams $B_1,\dots,B_k$ versus the results of glueing the corresponding strings of $D'_i$ to the same diagrams.  As we apply  \eqref{eq:simplificationRules}  to remove strings, the strings starting from $D_i$ associated to $g_j$, $i+1\leq j\leq s$  contribute an extra factor of $\sigma(g_j)^2$ as compared to their contribution if they were to start from $D'_i$.
 
 Using this, the value of the bottom diagram in \eqref{SDef} is
 \begin{eqnarray*}
Q_3&=&
\sum_{i \textrm{ even}}  \sigma(g_{i+1})^2\dots \sigma(g_s)^2 
 \mytau(e_1 \dots e_k g_{i+1}\dots g_{s} g_{1}\dots {g_{i-1}})
 \delta_{e^o=g_i}\sigma(e)^{-1} R \\
&=&
\sum_{i \textrm{ even}}  \frac{\mu(s(g_1))}{\mu(t(g_i))}  \mytau(e_1 \dots e_k g_{i+1}\dots g_{s} g_{1}\dots {g_{i-1}})\delta_{e^o=g_i}\sigma(e)^{-1}R\\
& =&
 \frac{\mu(s(g_1))}{\mu(s(e))} 
 \mytau(e_1 \dots e_k  D_{e^o}W) \sigma(e)^{-1}R\\
\end{eqnarray*}
By linearity, the same equation holds for arbitrary $W$: 
\begin{equation*}
Q_3= \sum_i t_i \sum_w \sigma_{B_i}(w) 
 \frac{\mu(s(w))}{\mu(s(e))} 
 \mytau(e_1 \dots e_k  D_{e^o}w) \sigma(e)^{-1}R\\
\end{equation*}

Now, \eqref{SDef} is equivalent to saying that $$Q_1 = Q_2 + Q_3.$$
Since $e,f,a_1,\dots,a_n$ are arbitrary, we see that  \eqref{SDef} is equivalent to \eqref{SD1divided} and thus to \eqref{SD1}
\end{proof} 

We next prove the uniqueness of the solutions to the Schwinger-Dyson equation in a small parameters regime.

\begin{lemma}\label{uniq}
 For any $c>0$ and $K>\ell(c)$,
there exists $t(c,K)$ so that
when $|t_i|\le t(c,K)$ for all $i\in\{1,\ldots,k\}$,
there exists a unique solution  $\mytau$
to \eqref{SD1}  which satisfies
\begin{itemize}
\item $\mytau(xx^*)\ge 0$, 
\item $\mytau({1_v})=1$, 
where $1_v$ is the trivial path consisting of no edges starting and ending at $v\in \Gamma$,
\item   $\mytau((e^0e)^{p})
\le K^p$ for all $e\in E_+$ and $p\in\mathbb N$.
\end{itemize}
\end{lemma}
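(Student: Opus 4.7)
The plan is to set up a fixed-point argument showing that the Schwinger--Dyson equation \eqref{SD1divided} has a unique solution within the admissible class.

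First I would derive a uniform a priori bound $|\mytau(w)|\le C\, R^{|w|}$ for some constants $C$ and $R$ depending only on $K$ and the graph. This follows from the three hypotheses by applying repeatedly the Cauchy--Schwarz inequality associated to the pre-inner product $\langle x,y\rangle=\mytau(y^{*}x)$ (which is non-negative by positivity), together with the bound on $\mytau((e^{o}e)^{p})$: for any loop $w$ starting at $v$, the inequality $|\mytau(w)|^{2}\le \mytau(w^{*}w)$ reduces the estimate inductively to moments of the form $\mytau((e^{o}e)^{2^{j}})$, yielding the desired geometric growth with $R\sim\sqrt K$.

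Next, given two solutions $\mytau_{1},\mytau_{2}$ in the admissible class, I would subtract \eqref{SD1divided} applied to each, obtaining a linear equation for $\eta = \mytau_{1}-\mytau_{2}$ of the schematic form
$$\eta(ue) = \sigma(e)^{-1}\bigl[\eta\otimes\mytau_{1}+\mytau_{2}\otimes\eta\bigr](\partial_{e^{o}}u) + \sum_{i}t_{i}\sum_{w}\sigma(e)^{-1}\frac{\mu(s(w))}{\mu(s(e))}\sigma_{B_{i}}(w)\,\eta(u\, D_{e^{o}}w).$$
Note that $\eta(1_{v})=0$ by the normalization. On the space of such functionals equipped with a carefully weighted norm of the form $\|\eta\|_{\rho}=\sup_{w}|\eta(w)|/\rho^{|w|}$, with $\rho$ taken sufficiently large compared to $R$ and, if needed, an additional factor (such as $|w|!$) absorbing the combinatorial growth from enumerating decompositions, the right-hand side defines a bounded linear operator $L_{t}$ on $\eta$. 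The first term is controlled because $\partial_{e^{o}}u$ decomposes $u$ into strictly shorter pieces $u_{1}, u_{2}$ paired with the admissible (hence bounded) $\mytau_{j}$; the second term contributes a factor linear in $|t_{i}|$ multiplied by a finite sum (the $B_{i}$ are fixed). Choosing $t(c,K)$ so that $\|L_{t}\|<1$ forces $\eta=0$ by the contraction principle.

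The main obstacle will be the second term: $u\, D_{e^{o}}(w)$ is in general a longer word than $ue$, so a naive induction on word length fails. The weighted norm has to be calibrated so that the exponential decay rate $\rho^{-1}$ dominates both the polynomial combinatorial factors from enumerating decompositions $u=u_{1}e^{o}u_{2}$ and the potential length increase coming from $D_{e^{o}}(w)$; the latter is compensated only by taking the $|t_{i}|$ small relative to $\rho$ and to the a priori bound $R$. This follows the strategy of \cite[Theorem 3.5]{guionnet-edouard:combRM}, adapted here to the setting of rectangular matrices indexed by edges of $\Gamma$, where the factors $\mu(s(w))/\mu(s(e))$ and $\sigma(e)^{-1}$ remain uniformly bounded once the finite graph $\Gamma$ is fixed.
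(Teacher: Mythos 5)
Your proposal is essentially the same argument as the paper's, cast in slightly different language: you replace the paper's generating series $\Delta(\gamma)=\sum_{p}\gamma^{p}\sup_{|w|\le p}|\tau_{1}(w)-\tau_{2}(w)|$ by the weighted sup-norm $\|\eta\|_{\rho}=\sup_{w}|\eta(w)|/\rho^{|w|}$ with $\rho\sim1/\gamma$, but the two key mechanisms are identical — the a priori geometric bound on each $\tau_{j}$ coming from positivity together with the moment constraint $\mytau((e^{o}e)^{p})\le K^{p}$, and the trade-off in which the $\partial_{e^{o}}$-term shortens words (gaining a factor $\rho^{-2}$, resp.\ $\gamma^{2}/(1-\gamma K)$) while the $D_{e^{o}}W$-term lengthens them and is tamed only by the coefficient $|t_{i}|\rho^{D-2}$, resp.\ $A(t)\gamma^{2-D}$, being taken small. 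One small note: the factorial weight you suggest as a possible safeguard is not needed, since the combinatorial sum over splittings $u=u_{1}e^{o}u_{2}$ is already absorbed by the geometric series once $\rho$ (equivalently $1/\gamma$) exceeds $K$.
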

\begin{proof} The proof follows the arguments of \cite[Section 2.4]{guionnet-edouard:combRM}
that we however repeat for further uses.
Existence is already guaranteed. To prove uniqueness
we assume we have two solutions $\tau_1$ and $\tau_2$ and let
$$\Delta(n)=\sup_{w\in L, |w|\le n}|\tau_1(w)-\tau_2(w)|$$
with $|w|$ the number of letters
in the word $w$. 
Then equation \eqref{SD1}
implies that, if we let $m=\min \left(\mu({s(e)})\mu(t(e))\right)^{\frac{1}{2}}>0$,
\begin{equation}\label{SDu}
\Delta(n+1)\le 2 m^{-1} \sum_{p=1}^{n-1}\Delta(p) K^{n-1-p}
+A(t) \Delta(n+D-1)
\end{equation}
with $D$ the degree of $W$ and if $D_{e^0} W=\sum_it_i \sum_{j=1}^{k_i^e}
q_{ij}^e$ with some monomials $q_{ij}^e$ with degree at most $D-1$, then
$$A(t)=\max_e (\mu({s(e)})\mu(t(e)))^{-\frac{1}{2}}
\sum_i|t_i |{k_i^e}$$
For $\gamma<1/K$, the sum $\Delta(\gamma)=\sum_{p\ge 1}\gamma^p \Delta(p)$ is finite
and satisfies
$$\Delta(\gamma)\le \frac{\gamma^2}{1-\gamma K} \Delta(\gamma)+A(t)\gamma^{-D+2}\Delta(\gamma).$$
We then choose $t$ small enough
so
that $$ \frac{\gamma^2}{1-\gamma L} +A(t)\gamma^{-D+2}<1$$
for some $\gamma\in ]0,L^{-1}[$, which guarantees that $\Delta(\gamma)=0$
and therefore $\delta(n)=0$ for all $n\ge 0$.
\hfill\qed \end{proof}

In the next section we study  Schwinger--Dyson equations for laws
on $\mathcal{P}^\Gamma$ and $P$.

\subsubsection{Proof of Theorem \ref{beta0} in the finite graph case}
In the case $t=0$, we deduce Theorem \ref{beta0}. First, note that the previous lemma already entails the convergence of $\int M^{-1}\Tr(A_w)d\mu^M(A)$, $w\in L$,
to the unique solution of the Schwinger-Dyson equation $\tau$. Hence, we only need to identify this limit. It is enough to show that if we define
$(\Tr_0)_{ij}$   as $\tau_{ij}$ but with $\Tr_0$ instead of  $\tau$, 
it will satisfy the same Schwinger-Dyson equation.
Equation \eqref{SDP} implies, for $t=0$
$$
\begin{array}{c}
\begin{tikzpicture}  [xxx/.style={
	rectangle, draw,
		rounded corners,
		minimum size=10mm,
		line width = 1.5pt},
		manyStrings/.style={
			line width=5pt}]
\node at (0,0) (P) [xxx] {$P$};
\node at (0,1.5) (Tr) [xxx] {$\tau$};
\draw [manyStrings] (P.north) -- (Tr.south);
\draw [manyStrings] (P.south) -- +(0,-0.5);
\end{tikzpicture}
\end{array}
= \sum_{i \textrm{ odd}}
\begin{array}{c}
\begin{tikzpicture}  [ xxx/.style={
	rectangle, draw,
		rounded corners,
		minimum size=10mm,
		line width = 1.5pt},
		manyStrings/.style={
			line width=5pt}]
\node at (0,0) (P) [xxx] {$\qquad\qquad P\qquad\qquad$};
\node at (-1,1.6) (Tr) [xxx] {$\tau$};
\node at (1,1.6) (Trr) [xxx] {$\tau$};
\draw [manyStrings] (P.south) -- +(0,-0.5);
\draw [manyStrings] (node cs:name=P, angle=120) .. controls +(up:0.3) and +(down:0.3) .. (Tr.south);
\draw [manyStrings] (node cs:name=P, angle=60) .. controls +(up:0.3) and +(down:0.3) .. (Trr.south);
\draw [rounded corners] (P.north) -- +(0,1.75) -- +(2.25,1.75) -- +(2.25, -1.5) -- +(0.35, -1.5) -- (node cs:name=P, angle=-60);
\node [below] at (P.north) {$\scriptscriptstyle \circled{i}$};
\end{tikzpicture}
\end{array}
$$
This equation clearly has a unique solution, since it defines 
the maps $\tau_{i,j}$ recursively in terms of $\tau_{i,j'}$ with $j<j'$.  
We claim that in fact $\tau_{i,j}=(\Tr_0)_{i,j}$ where this notation means that we
replaced $\mytau$ by $\Tr_0$ in the definition of $\Tr'_{i,j}$.
Thus $(\Tr_0)_{i,j}$ is given by
$$
Tr_0 = 
\begin{array}{c}
\begin{tikzpicture}  [ xxx/.style={
	rectangle, draw,
		rounded corners,
		minimum size=8mm,
		line width = 1.5pt},
		manyStrings/.style={
			line width=5pt}]
\node at (0,0) (P) [xxx] {$P$};
\node at (0,1.5) (Tr) [xxx] {$\sum TL$};
\draw [manyStrings] (P.north) -- (Tr.south);
\draw [manyStrings] (P.south) -- +(0,-0.5);
\end{tikzpicture}
\end{array}
$$
where $\sum TL$ stands for the sum of all $\TL$ diagrams.  Here the $i+1$-st string of $P$ is the first one of the $j$ strings connected to $\sum \TL$.
If we follow the rightmost top string of $P$, it will be connected to one of the other 
vertical strings of $P$ (and, for parity reasons, it will be an odd string).  From this we see 
that $\tau_{i,j}$ satisfy the same recursive relation as $(\Tr_0)_{i,j}$.
\qed

\subsection{Proof of Proposition \ref{propfinite}}
Let us now consider the case  $t\neq 0$.  We have already shown part (b).  We have also shown that any limit point of $\left\{\mu_t^{M,K}(\tr(\cdot))\right\}_{M\to \infty}$ satisfies the Schwinger--Dyson equation; it follows from uniqueness of the solution that all such limit points are equal.  Thus the limit
$$ 
\mytau_t (X_w)(v) =\lim_{M\ra\infty} 
\mu^{M,K}_t(\tr(A_{w}))$$  
exists and we obtain part (a) of the Proposition.

To verify part (c), it is sufficient to show that $(\ourtrt)_{i,j}$ satisfies the Schwinger--Dyson equation:
\begin{equation}
\begin{array}{c}
\begin{tikzpicture} [ xxx/.style={
	rectangle,
		rounded corners,
		minimum size=8mm,
		line width = 1.5pt},
		manyStrings/.style={
			line width=5pt}]
\node (A) at (0,0) [xxx,draw]{$\qquad P \qquad$};
\node (Tr) at (0,2) [xxx,draw]{$\qquad \ourtrt \qquad$};
\draw [manyStrings] (Tr.south) -- (A.north);
\draw [manyStrings] (A.south) -- +(0,-0.5);
%
%
\draw [rounded corners] (node cs:name=Tr,angle=-40) 
-- +(0,-0.5)
-- +(1,-0.5)
-- +(1,-2.3)
-- +(0,-2.3)
-- (node cs:name=A, angle=-40);
\end{tikzpicture}
\end{array}
= 
\sum_{
	i\textrm{ odd}
} {}
\begin{array}{c}
\begin{tikzpicture}  [ xxx/.style={
	rectangle, draw,
		rounded corners,
		minimum size=12mm,
		line width = 1.5pt},
		manyStrings/.style={
			line width=5pt}]
\node (A) at (0,0) [xxx]{$\qquad\qquad P \qquad\qquad$};
\node (Tr) at (-1.5,2) [xxx] {$\ourtrt$};
\node  (Trr) at (1.5,2) [xxx]{$\ourtrt$};
\draw [manyStrings] (A.south) -- +(0,-0.5);
\draw [manyStrings] (Tr.south) .. controls +(down:0.3) and +(up:0.3) .. 
	(node cs:name=A, angle=150);
\draw [manyStrings] (Trr.south) .. controls +(down:0.3) and +(up:0.3) .. 
	(node cs:name=A, angle=30);
\draw[rounded corners] (A.north) -- +(0,2.1) -- +(2.7,2.1)
-- +(2.7,-1.4) -- +(0.3,-1.4) -- (node cs:name=A, angle=-65);
\node [below] at (A.north) {$\circled{i}$};
\end{tikzpicture}
\end{array} 
 + \sum_{i \textrm{ even}}
\begin{array}{c}
\begin{tikzpicture}  [ xxx/.style={
	rectangle, draw,
		rounded corners,
		minimum size=12mm,
		line width = 1.5pt},
		manyStrings/.style={
			line width=5pt}]
\node (Tr) at (0,2) [xxx] {$\qquad \ourtrt \qquad$};
\node (A) at (-1,0) [xxx]  {$P$};
\node (W) at (1,0) [xxx] {$W$};
\draw [manyStrings] (A.north) .. controls +(up:0.4) and +(down:0.4) .. (node cs:name=Tr, angle=-120);
\draw [manyStrings] (A.south) -- +(0,-0.6);
\draw [manyStrings] (node cs:name=W,angle=120) .. controls +(up:0.4) and +(down:0.4) .. (node cs:name=Tr, angle=-60);
\draw [manyStrings, rounded corners] (node cs:name=W,angle=60) -- +(0,0.25) -- +(1.0,0.25) -- +(1.0,-1.50) -- (0,-0.85) -- (Tr.south);
\draw [rounded corners] (W.north) -- +(0,0.5) -- +(1.75,0.5) -- +(1.75,-1.73) -- (-0.63,-1.1) -- (node cs:name=A, angle=-60);
\node [below] at (W.north) {$\circled{i}$};
\end{tikzpicture}
\end{array}
\end{equation} 
To see that this equation holds, we note that it simply expresses the fact that the rightmost top string of $P$ must either come back to $P$, or be connected to a vertex coming from a  copy of the
potential $W$.  This shows part (c).

Finally, for part (d), let $\mathcal{P}\subset\mathcal{P}^\Gamma$ be a subfactor planar algebra, and assume that $W\in \mathcal{P}$ (as an example, one could consider $\TL\subset \mathcal{P}^\Gamma$ via the embedding $B\mapsto w_B$ given by equation \eqref{defwT}).  

We note that if $Q=\sum_w \alpha_w w \in \mathcal{P}$ and  $v\in \Gamma$, then
$\tau_t(Q)(v) = \mytau_t(\sum_w \alpha_w A_w )(v)= \ourtrt(Q)(v)$ by linearity.  

The infinite power series defining $\ourtrt(Q)$ converges, and its finite partial sums involve values diagrams connecting $Q$ to several copies of $W$ and having no unconnected strings.  Since $W\in \mathcal{P}$ and $\mathcal{P}$ is a planar subalgebra of $\mathcal{P}^\Gamma$, the values of these diagrams are the same whether they are evaluated in $\mathcal{P}$ (yielding an element of $(\mathcal{P})_0$, since no unconnected strings are present) or $\mathcal{P}^\Gamma$ (yielding an element of $(\mathcal{P}^\Gamma)_0$).  Since $\mathcal{P}$ is a subfactor planar algebra, $ \mathbb{C}\cong (\mathcal{P})_0$ is the subset of $(\mathcal{P}^\Gamma)_0$ consisting of elements which are constant on all vertices.  It follows that the value of $\ourtrt(Q)(v)$ does not depend on $v$.
 \qed

\subsubsection{Free energy}
We recap here how to get from the convergence of
the tracial state that of the free energy
$$F^{M,K}_t=\frac{1}{M^2}\log \frac{Z^{M,K}_t}{Z^{M}_0} =\frac{1}{M^2}\log \int 1_{\|A\|_\infty\le K}
e^{M\sum_{i=1}^k t_i \sum_{v\in V}\mu(v)\sum_{w\in L_{B_i}(v)} \sigma_B(w) \Tr(A_w)}
\mu^{M}(d A)$$
To that end we observe that, by differentiating with respect to $\alpha$,
\begin{eqnarray*}
F^{M,K}_t&=&  F^{M,K}_0+
\sum_{i=1}^k t_i \sum_{v\in V}\mu(v)^2\sum_{w\in L_{B_i}(v)}
 \int_0^1  \tr(A_w) d\mu^{M,K}_{\alpha t}(A) d\alpha\\
\end{eqnarray*}
Note that $F^{M,K}_0$ goes to zero as $K$ has been chosen large enough. 
Thus, by Proposition \ref{propfinite}, for $|t_i|$ smaller than $t(c,L)$, 
the last integral converges uniformly for all $\alpha \in [0,1]$ 
and therefore by bounded convergence theorem, 
we get
\begin{eqnarray*}
\lim_{M\ra\infty} F^{M,K}_t&=&
\sum_{i=1}^k t_i \sum_v \mu(v)^2\sum_{n_i\ge 0}
\prod_j \frac{1}{n_j!} \int_0^1 \prod(\alpha t_j)^{n_j+1_{j=i}}
\delta^{\sharp loops}d\alpha ,\\
&=&\left(\sum_v \mu(v)^2\right)\sum_{\sum n_i\ge 1}
\prod_j \frac{1}{n_j!}t_j^{n_j}
\delta^{\sharp loops}. \\
\end{eqnarray*}
 \subsubsection{The case of $\mathbb{A}_\infty$ and $\TL$ for $\delta \geq 2$}\label{Ainfty}

The previous construction only allows a countable set of values
of $\delta$'s (which however contains all the possible  $\delta<2$
and  accumulates at $2$). To get all possible 
values of $\delta$, we need to consider infinite graphs. 
However, our construction below relies heavily on the fact that
the entries of the eigenvector $\mu$ go to infinity  exponentially fast
 with their distance to a distinguished vertex of the graph. 
We will therefore restrict ourselves to the graph  $\mathbb{A}_\infty$
for which we shall prove that this property holds. Since not all subfactor
planar algebras can be embedded into the graph planar algebra of $\mathbb{A}_\infty$,
 we shall restrict ourselves to Temperley--Lieb algebra $\TL$
in this section.
In this case, it is
enough to consider the infinite graph $\mathbb{A}_\infty$ since this graph possesses an eigenvalue $\delta$ for any possible $\delta\geq 2$
with an eigenvector with
positive entries. We let $\mu$ be the corresponding 
eigenvector and denote by $n\in \mathbb N$ the
vertices of $\mathbb{A}_\infty$. Noting that by definition for $n\ge 3$
$$\mu(n-1)+\mu(n+1)=\delta \mu(n)$$
we see that $\mu(n)\approx \delta^n$ as $n$ goes to infinity.
We let, as in the finite graph case, $\mu^M$ be the  law
of the independent $M_{s(e)}\times M_{t(e)}$
matrices with covariance $1/M\sqrt{\mu(s(e))\mu(t(e))}$
for $e\in E_+$ (which is now infinite).
Edges far from the origin will have variance decreasing exponentially
fast with the distance to the origin (recall that $\delta>1$).
To construct the law on the infinite graph, we let $\Sigma_n$ be the 
sigma-algebra generated by the $A_e$ for $e\in E_n$, the set
of edges with  distance less than
$n$ from the origin. We let $V_n$ be the vertices connected to the origin
by a path in $E_n$, and  $L(n)$ (resp. $L_B(n)$) the set of loops  (resp.
of $L_B$)
 with all edges in $E_n$.
The idea is to consider the Gibbs
measure indexed by infinite graphs as the
limit of the conditional expectation with respect to $\Sigma_n$.
More precisely,
we  define
$$d\mu^{M,K,n}_{t}(dA)=\frac{\prod_{e\in V_n}
1_{\|A_e\|_\infty\le K}}{Z^{M,K,n}_V}
\exp\left\{ M\sum_{i=1}^k t_i\sum_{v\in V_n}\mu(v) \sum_{w\in L_{B_i}(n)}\sigma_{B_i}(w) \Tr(A_w) \right\}d\mu^M(dA).$$
We shall prove

\begin{proposition}\label{propinfinite}
Let $K>2$. Then, there exists $t(K)>0$ so that whenever
$\max_{1\le i\le k}
|t_i|\le t(K)$, for any vertex $v$
 for any loop $w\in L(v)$ there exists
a limit
$$\mytau_t(X_w)(v)=\lim_{n\ra\infty}
\lim_{M\ra\infty} 
\mu^{M,K,n}_t(\tr(A_{w}))\,.$$
Moreover, $\mytau_t(P^v_B)(v)=\ourtrt(B)$ for any Temperley--Lieb tangle 
$B$ and any vertex $v\in V_+$ (resp. $V_-$) if the marked point of $B$ is in an
unshaded  (resp. shaded) region . Finally,
\begin{eqnarray*}
\lim_{n\ra\infty}
\lim_{M\ra\infty}\frac{1}{M^2\sum_{v\in V_n} \mu(v)^2}\log Z^{M,K, n}_t
&=&\sum_{\sum n_i\ge 1}
\prod \frac{1}{n_j!}t_j^{n_j}
\delta^{\sharp loops} \\
\end{eqnarray*}
\end{proposition}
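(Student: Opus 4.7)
The strategy is to treat each finite truncation with Proposition \ref{propfinite} and then exploit the exponential decay of the Perron--Frobenius eigenvector $\mu$ (equivalently, of the edge variances $(M\sqrt{\mu(s(e))\mu(t(e))})^{-1}$) to pass to the limit $n\to\infty$. For each fixed $n$, the measure $\mu_t^{M,K,n}$ is a Gibbs measure on finitely many matrices indexed by $E_n$, with potential $W_n=\sum_i t_i \sum_{v\in V_n}\mu(v)\sum_{w\in L_{B_i}(v)\cap L(n)} \sigma_{B_i}(w) X_w$. The first step is to check that the convexity, Brascamp--Lieb and concentration estimates of Proposition \ref{propfinite} go through with a threshold $t(K)$ that is independent of $n$: the lower bound $m$ on the Hessian of the quadratic part only sees nearest neighbours (the graph $\mathbb{A}_\infty$ has degree $\leq 2$), and the constant $A(t)$ appearing in Lemma \ref{uniq} is controlled by the maximal number of terms of $D_{e^o} W_n$ at a given edge, which is bounded uniformly in $n$ since each $B_i$ is a fixed $\TL$ element. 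This yields, for all sufficiently small $t$, a limit $\tau_t^{(n)}(X_w):=\lim_{M\to\infty}\mu_t^{M,K,n}(\tr(A_w))$ for every loop $w\in L(n)$, satisfying the Schwinger--Dyson equation on $E_n$ and the uniform bound $\tau_t^{(n)}((e^o e)^p)\leq K^p$.

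The decisive point is locality of the cyclic derivative. For any fixed edge $e$, $D_{e^o} W_n$ only receives contributions from loops $w\in L_{B_i}(v)$ that actually traverse $e^o$; because each $B_i$ has a fixed, bounded diameter $d_i$, this forces $v$ to lie within distance $d_i$ of $e$. Consequently, once $n$ exceeds a threshold depending only on the support of $e$ and on $\max_i d_i$, $D_{e^o}W_n$ stabilizes, i.e.\ becomes independent of $n$. Propagating this observation through the Schwinger--Dyson recursion \eqref{SD1divided}, for each fixed loop $w$ the equation determining $\tau_t^{(n)}(X_w)$ involves only edges and loops in a finite neighbourhood of $w$ and is identical for all sufficiently large $n$. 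Combined with the uniform $K$-bound and the uniqueness argument of Lemma \ref{uniq} (applied verbatim, since its combinatorial estimates depend only on the degree of $W$, on $K$ and on $m$), this gives convergence $\tau_t^{(n)}(X_w)\to \tau_t(X_w)$ for all $w\in L$ and characterizes $\tau_t$ as the unique solution, bounded by $K^p$ on $(e^o e)^p$, of a Schwinger--Dyson equation on the infinite graph.

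To identify $\tau_t$ with $\mathscr{T}_t$, I would repeat the proof of Proposition \ref{propfinite}(c)--(d): viewed through the embedding $\TL\hookrightarrow \mathcal{P}^{\mathbb{A}_\infty}$, the observable $\mathscr{T}_t$ satisfies the same Schwinger--Dyson equation (the diagrammatic surgery on the rightmost top string is planar and unchanged), so uniqueness yields $\mytau_t(P_B^v)(v)=\mathscr{T}_t(B)$ for any $\TL$ diagram $B$ and any admissible $v$. For the free energy, differentiation in the parameters $t_i$ gives
\begin{equation*}
F_t^{M,K,n}-F_0^{M,K,n}=\sum_{i=1}^k t_i\sum_{v\in V_n}\mu(v)\sum_{w\in L_{B_i}(v)\cap L(n)}\!\!\sigma_{B_i}(w)\int_0^1 \mu_{\alpha t}^{M,K,n}(\tr(A_w))\,d\alpha,
\end{equation*}
and the uniform convergence of the trace in $M$ (dominated convergence in $\alpha$) combined with the locality just established — each $v\in V_n$ sufficiently far from the boundary of $V_n$ contributes a quantity depending only on a neighbourhood of $v$, with total weight $M\mu(v)\cdot M\mu(v)=M^2\mu(v)^2$ coming from the factor $\mu(v)$ in $W_n$ times the matrix dimension $M_v=M\mu(v)$ — yields the stated expansion after dividing by $M^2\sum_{v\in V_n}\mu(v)^2$ and sending $M\to\infty$ then $n\to\infty$.

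The main obstacle is the locality/decoupling step: the weights $\mu(v)$ in $W_n$ grow exponentially, and one must carefully exploit the matching exponential decay of the matrix variances (hence of all Wick contractions that join distant edges) to see that, for any fixed observable, the contributions of edges far from its support truly decouple and the Schwinger--Dyson recursion stabilizes. Restricting to $\mathbb{A}_\infty$ is essential here, as it guarantees $\mu(k)\sim q^k$ with $q>1$ whenever $\delta\geq 2$; this exponential separation of scales is precisely what makes Lemma \ref{uniq}'s constants $n$-independent and justifies the iterated limit $\lim_{n\to\infty}\lim_{M\to\infty}$.
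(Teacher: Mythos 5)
Your first step (uniform-in-$n$ convexity, Brascamp--Lieb and concentration, giving $\lim_{M\to\infty}\mu_t^{M,K,n}(\tr A_w)=\tau_t^{(n)}(X_w)$ for each fixed $n$) matches the paper's opening moves. The gap lies in the passage $\tau_t^{(n)}\to\tau_t$ as $n\to\infty$.

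The claim that \emph{``for each fixed loop $w$ the equation determining $\tau_t^{(n)}(X_w)$ involves only edges and loops in a finite neighbourhood of $w$ and is identical for all sufficiently large $n$''} is not correct as a statement about the full Schwinger--Dyson hierarchy. One application of \eqref{SD1divided} expresses $\tau(ue)$ in terms of $\tau$ of words of degree $|w|+D-2$ (the $D_{e^o}W$ term does not decrease degree), and the support of those words grows at each stage of the recursion. So the closed system of equations determining $\tau_t^{(n)}(X_w)$ necessarily probes loops supported arbitrarily far from the origin, and for such loops $W_n$ and $W_m$ genuinely differ. One cannot then invoke Lemma~\ref{uniq} ``verbatim'': that lemma compares two solutions of the \emph{same} SD hierarchy, whereas here $\tau_t^{(n)}$ and $\tau_t^{(m)}$ solve \emph{different} hierarchies for $n\ne m$. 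What is needed is a quantitative comparison, not a stabilization statement.

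That quantitative step is exactly what the paper provides: it sets
$$\Delta(k,p):=\max_{|w|\le k,\ w\in L(p)}\bigl|\tau^n(w)-\tau^m(w)\bigr|,$$
shows that this satisfies the recursion \eqref{SDu} for $k+p\le n\wedge m$, bounds the discrepancy in the SD equations at higher degree uniformly by $A_{n\wedge m}(t)K^{D-2+k}$ with $A_n(t)\lesssim\max_{e\in E_n^c}(\mu(s(e))\mu(t(e)))^{-1/2}\sim\delta^{-n}$, and then runs the generating-function estimate on $\Delta(\gamma,p)$ for $p\approx (n\wedge m)/2$ to get $\Delta(\gamma,(n\wedge m)/2)\lesssim (\gamma K/\delta)^{(n\wedge m)/2}\to 0$ provided $\gamma K/\delta<1$. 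Your ``main obstacle'' paragraph correctly identifies the exponential growth of $\mu$ as the saving force, but you do not carry out this estimate; as written, the argument jumps from ``$D_{e^o}W_n$ is eventually constant'' to ``uniqueness applies,'' which does not follow. You would need to add the explicit $\Delta(k,p)$ comparison (or an equivalent quantitative bound on how the SD hierarchies for $\tau^n$ and $\tau^m$ diverge across the boundary of $E_{n\wedge m}$) to close the proof. The identification of the limit with $\mathscr{T}_t$ and the free-energy expansion are fine once this convergence is established.
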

\begin{proof}
In fact, due again to strict convexity,
the negative of   the Hessian   of the log-density 
of $\mu^{M,K,n}_t$ is bounded below independently of $n$
on the set $\|A\|_\infty\le K$.  Moreover, because $(\mu(s(e))\mu(t(e))^{\frac 12}$ is at least of
order $\delta^n$ if $s(e)\in E_n^c$, we can choose $t$ small
enough so that this Hessian is bounded below by the 
diagonal matrix which takes the value $C\delta^{n}$ for $v\in
E_{n+1}\backslash E_n$ with some positive constant $C$.
This is enough to guarantee Brascamp-Lieb inequalities and
concentration of measure which do not depend on the dimension 
and in particular on $n$. 
In particular 
$$\mu^{M,K,n}_{t}(\max_{e\in E_{k+1}\backslash E_k}
\|A_e\|_\infty\ge (\ell-\ell_0)\delta^{-k/2} )\le e^{-\eta M\ell} $$
for some $\ell_0 <\infty$, $\ell>\ell_0$  and $\eta>0$.
Moreover, we can apply concentration inequalities
and use the fact that $A_w$ is a Lipschitz function
of the entries with Lipschitz norm of order $M^{-\frac 12}$ 
with overwhelming 
probability because of the above control (since $\delta>1$ we obtain absolutely
converging sequences) to see that
\eqref{conc} still holds under $\mu^{M,K,n}_{t}$, with a constant
$C(w,K)$ independent of $n$. Therefore, by the same arguments,
we obtain the convergence of $\mu^{M,K,n}_{t}(\tr(A_w))$, 
for any loop $w\in L(n)$
as $M$ go to infinity. The  rescaled (by $\mu$) limit $\tau^n$
 satisfies a Schwinger--Dyson equation
which has a unique solution, exactly as in Lemma \ref{uniq}, again
because
the controls are uniform in $n$, the range of $t$ for which we
have uniqueness does
not depend on $n$. 
Note that this equation is the same for $\tau^n$ and $\tau^m$
provided we consider only loops in $L(n\wedge m -D)$ where $D$ is the
maximal number of boundary points in the potential. Define
$$\Delta(k,p):=\max_{|w|\le k\atop w \in L(p)}|\tau^n(w)-\tau^m(w)|$$ 
We see that
$\Delta(k,p)$ satisfies \eqref{SDu} for $k +p \le n\wedge m$.
For larger $k$ we just bound the additional term uniformly by
$A_{n\wedge m}(t)K^{D-2+k}$ with
$$ A_{n}(t)=\max_{e\in E_n^c}\sqrt{\mu(s(e))\mu(t(e))}^{-1}\max_i\sum |t_i|k_i\,.$$
 We then find that if $\Delta(\gamma,p)=\sum_{k\ge 0}
\gamma^k \Delta(k,p)$, for say $p=n\wedge m/2$,
$$\Delta(\gamma,p)\le \frac{\gamma^{2}}{m(1-K\gamma)} \Delta(\gamma)+A(t)\gamma^{2-D}\Delta(\gamma)
+\sum_{k\ge m\wedge n/2} A_{n\wedge m/2}(t)K^{D-2+k} \gamma^{k-1}$$
This yields, if $(C')^{-1}=1-\frac{\gamma^{2}}{m(1-K\gamma)} -A(t)\gamma^{2-D}$ is positive (which is always possible if $t$ is small enough
for some $\gamma\in (0,K^{-1})$),
$$\Delta(\gamma, \frac{n\wedge m}{2})\le C'\sum_{p\ge m\wedge n} A_{n\wedge m/2}(t)K^{D-2+k} \gamma^{k-1}
\approx C'' (\gamma K/\delta)^{n\wedge m/2}$$
which goes to zero as $n\wedge m$ goes to infinity for $K\gamma\delta^{-1}<1$. This
shows that $\mytau^n$ converges as $n$ goes to infinity.
The limit satisfies Schwinger--Dyson equation
with appropriate $\delta$ and therefore corresponds, when 
restricted to $P$, with $\ourtrt$. The statement
on the free energy is derived as in the previous subsection.
\end{proof}

\section{Combinatorics of a few models}\label{few}
In this section, we actually compute 
the generating functions of a few loop models  we
have just constructed.
\subsection{The cup matrix model}\label{cupmodsec}
We consider the case of
the matrix model with tangles  $B_n={\rm cup}^n$ obtained by
drawing $n$ non nested strings in a white
tangle with black inside.
With $k$ finite and fixed, we thus consider the law $\ourtrt$
corresponding to the tangles $(B_1,\ldots,B_k)$ and
the coefficients $(t_1,\ldots, t_k)$.
By Propositions \ref{propfinite} and \ref{propinfinite},
$B_n$ is represented in $\mathcal{P}^\Gamma$ by $(\sum_{e\in E_{+}}\sigma(e) X_e X_e^*)^n$,
e.g.
\[
B_2=\vcenter{\hbox{
\begin{tikzpicture}
\vertexhw{a}(0,0)
\node at ([xshift=-0.1cm,yshift=0.2cm]a.north west) {$\scriptstyle X_{(w,v)}$};
\node at ([xshift=-0.1cm,yshift=-0.25cm]a.south west) {$\scriptstyle X_{(v,u)}$};
\node at ([xshift=0.1cm,yshift=0.2cm]a.north east) {$\scriptstyle X_{(v,w)}$};
\node at ([xshift=0.1cm,yshift=-0.25cm]a.south east) {$\scriptstyle X_{(u,v)}$};
\node at (0,-0.25) {$\color{white}\scriptstyle u$};
\node at (0.25,0) {$\scriptstyle v$};
\node at (0,0.25) {$\color{white}\scriptstyle w$};
\end{tikzpicture}}},
\qquad
B_3=
\vcenter{\hbox{
\begin{tikzpicture}
\begin{scope}
\clip[draw,rounded corners] (3,0.6) -- ++(0.6,0) -- ++(0.3,0.5) -- ++(-0.3,0.5) -- ++(-0.6,0) -- ++(-0.3,-0.5) -- cycle;
\coordinate (b) at (3.3,1.1);
\draw[fill=gray] (3,0.6) .. controls (b) .. ++(0.6,0) ++(0.3,0.5) .. controls (b) .. ++(-0.3,0.5) ++(-0.6,0) .. controls (b) .. ++(-0.3,-0.5);
\end{scope}
\node at (3,1.3) {$\color{white}\scriptstyle u$};
\node at (3.6,1.3) {$\color{white}\scriptstyle w$};
\node at (3.3,0.75) {$\color{white}\scriptstyle x$};
\node at (3.3,1.4) {$\scriptstyle v$};
\node at (3.8,1.8) {$\scriptstyle X_{(w,v)}$};
\node at (2.8,1.8) {$\scriptstyle X_{(v,u)}$};
\node at (2.3,1.1) {$\scriptstyle X_{(u,v)}$};
\node at (2.8,0.35) {$\scriptstyle X_{(v,x)}$};
\node at (3.8,0.35) {$\scriptstyle X_{(x,v)}$};
\node at (4.3,1.1) {$\scriptstyle X_{(v,w)}$};
\end{tikzpicture}}},
\qquad
\text{etc}
\]
and the associated Gibbs 
measure is, by Example \ref{extangle},
$$d\mu^{M,K}_t(A)=
\frac{{\bf 1}_{\|A\|\le K}}{Z^{M,K}_t}
e^{M\sum_{i=1}^n t_i \sum_{v\in V}
\mu(v)  \sum_{w\in L_{B_i}(v)}\sigma(w)
\Tr((\sum_{e\in E_{+}}\sigma(e) A_e A_e^*)^i)}
d\mu^M(A).$$
Note that  the families $(A_e, s(e)=v)$, $v\in V_+$, 
are independent. Since we evaluate the above expression
at words in $Z_v=(\sum_{e:s(e)=v} \sigma(e) A_e A_e^*)$ for 
a given $v$ we may as well consider the law 
of $Z_v$ for a fixed $v$.
Recalling that $A_e$ has variance $\sqrt{M_{s(e)}M_{t(e)}}^{-1}$,
we find that   $Z_v= \sum_{e: s(e)=v} \sigma(e) X_e X_e^*= Y_vY_v^*$
with a $M_{v}\times (\sum_{e:s(e)=v} M_{t(e)})$ matrix 
$Y_v$ with i.i.d.\ centered Gaussian entries 
with covariance $M_v^{-1}$.
The law of the eigenvalues  $(\lambda_1,\cdots,\lambda_{\mu(v) M})$
of such a matrix is asymptotically equivalent
(since we can again by Brascamp--Lieb inequality
remove the cutoff on $A$ and transform it as a cutoff on
$Z_v$ for some $K'$ large enough)   to
$$
d\mu^{M,K'}_t(\lambda)=\frac{1}{Z^{M,K'}_t}
\prod_{i\neq j} |\lambda_i-\lambda_j|   \prod_i 
 \lambda_i^{\sum_{e:s(e)=v} M_{t(e)}}  e^{ M_v\sum_{i=1}^{M_v}
(\sum_{n=1}^{k}t_n 
\lambda_i^{n} -\lambda_i)}
\prod 1_{\lambda_i\in [0,K']}d\lambda_i.$$
Since $(\sum_{e:s(e)=v} M_{t(e)})/M_v$ 
converges as $M$ goes to infinity
to $\delta$, it is classical  \cite[Theorem 2.6.1]{AGZ} to
prove a large deviation principle
for the law of the empirical
measure of the $\lambda_i$ under $\mu^{M,K'}_t$
from which one easily derives the convergence of
the empirical measure. 
Therefore we deduce that
\begin{lemma}\label{cupmodlem} For all $K>2$, and $t=(t_1,\ldots,t_k)$ small
enough,  all  $n\ge 0$, all $v\in V_+$,
\begin{eqnarray}
\ourtrt(B_n)&=&\lim_{M\ra\infty} \mu^{M,K}_t(\tr(P^v_{B_k}))=\nu_t(x^n)\label{cocot}
\end{eqnarray}
with $\nu_t$ the only probability  measure on $[-K,K]$ which maximizes,
with $P(x)=\sum_{n=1}^k t_n x^n$,
$$I_t(\nu):=\Sigma(\nu)+\delta\int\log|x|d\nu(x)+ \int P(x) d\nu(x)-\int x d\nu(x)$$
where $\Sigma$ is the free  entropy
$$\Sigma(\mu)=\iint\log|x-y|d\nu(x)d\nu(y).$$
\end{lemma}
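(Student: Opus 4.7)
The first equality in \eqref{cocot} is immediate from Proposition~\ref{propfinite} (or Proposition~\ref{propinfinite} in the infinite-graph case), so the heart of the argument is to identify the limit $\lim_{M\to\infty}\mu^{M,K}_t(\tr(P^v_{B_n}))$ as $\int x^n\,d\nu_t(x)$. The plan is to reduce the multi-matrix model at the vertex $v$ to a single Wishart-type spectral model and then to apply standard large-deviation techniques.

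First, I would use independence across vertices: for $v\in V_+$ fixed, the potentials $W_{B_i}$ decouple into sums over vertices, and the family $\{A_e:s(e)=v\}$ is independent of $\{A_e:s(e)=v'\}$ for $v'\ne v$. Since $P^v_{B_n}=Z_v^n$ with $Z_v=\sum_{e:s(e)=v}\sigma(e)A_eA_e^*=Y_vY_v^*$ for a rectangular Gaussian matrix $Y_v$ of size $M_v\times\bigl(\sum_{e:s(e)=v}M_{t(e)}\bigr)$ with i.i.d.\ centered entries of variance $M_v^{-1}$, the relevant marginal is the law of the eigenvalues $(\lambda_1,\dots,\lambda_{M_v})$ of $Z_v$. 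Computing the Jacobian of the map $Y_v\mapsto(\text{eigenvalues of }Y_vY_v^*,\text{unitaries})$ in the standard way yields the Vandermonde $\prod_{i<j}(\lambda_i-\lambda_j)^2$, a Jacobian factor $\prod_i\lambda_i^{\sum_{e:s(e)=v}M_{t(e)}-M_v}$, and the Gaussian weight $\exp(-M_v\sum_i\lambda_i)$, multiplied by $\exp(M_v\sum_{i=1}^{M_v}P(\lambda_i))$ coming from the potential (and restricted to $\lambda_i\in[0,K^2]$ after translating the cutoff on $\|A_e\|_\infty$, using Brascamp--Lieb to justify the passage between the two cutoffs for $t$ small).

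Second, the ratio $\bigl(\sum_{e:s(e)=v}M_{t(e)}\bigr)/M_v$ converges to $\delta$ because $\mu$ is a Perron--Frobenius eigenvector of eigenvalue $\delta$ for the adjacency matrix. Writing the resulting joint density as
\[
\frac{1}{Z^{M,K'}_t}\exp\Bigl(-M_v^2\,J_M(\hat\nu_M)\Bigr),\qquad \hat\nu_M=\frac{1}{M_v}\sum_i\delta_{\lambda_i},
\]
with
\[
J_M(\nu)=\int\!\!\int_{x\ne y}\!\log|x-y|^{-1}\,d\nu(x)d\nu(y)-\delta\!\int\log|x|\,d\nu(x)-\!\int P(x)\,d\nu(x)+\!\int x\,d\nu(x)+o(1),
\]
I would invoke the standard large-deviation principle for log-gases with external potential on a compact set (see \cite[Theorem 2.6.1]{AGZ}, adapted to include the $\log|x|$ repulsion from the origin as in the Laguerre/Wishart case). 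The good rate function is exactly $-I_t(\nu)+\const$, so $\hat\nu_M$ concentrates exponentially fast on the set of maximizers of $I_t$.

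Third, I would establish uniqueness of the maximizer $\nu_t$. For $t=0$, $I_0$ is strictly concave on the space of probability measures with finite logarithmic energy thanks to strict concavity of the logarithmic energy $\nu\mapsto\Sigma(\nu)$; the additional terms $\delta\int\log|x|\,d\nu+\int P\,d\nu-\int x\,d\nu$ are linear in $\nu$, hence $I_t$ is strictly concave for every $t$ and therefore admits a unique maximizer $\nu_t$ supported in $[0,K']$ (or, after the substitution $x\mapsto x^2$, in $[-K,K]$). Moment convergence $\mu^{M,K}_t(\tr Z_v^n)\to\int x^n\,d\nu_t(x)$ then follows from the concentration of $\hat\nu_M$ together with the uniform cutoff on the support.

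The main technical obstacle is the interplay between the operator-norm cutoff $\|A\|_\infty\le K$ used to define the matrix model and the spectral cutoff $\lambda_i\in[0,K'^{\,2}]$ needed to apply the LDP on a compact set; making sure the two cutoffs give the same limit for $t$ small enough requires the Brascamp--Lieb and concentration estimates already developed in Section~\ref{sssec:SD}, specifically the fact that the eigenvalues of $Z_v$ stay in a fixed compact set with overwhelming probability under $\mu^{M,K}_t$ for small $t$, so that both cutoffs become inactive in the limit.
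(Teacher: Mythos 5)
Your proposal follows essentially the same route as the paper: reduce at the fixed vertex $v$ to the Wishart matrix $Z_v=Y_vY_v^*$, pass to the joint eigenvalue density, observe that $\bigl(\sum_{e:s(e)=v}M_{t(e)}\bigr)/M_v\to\delta$, invoke the large-deviation principle of \cite[Theorem 2.6.1]{AGZ}, and handle the cutoff via Brascamp--Lieb; you also usefully spell out the uniqueness of the maximizer by strict concavity, which the paper leaves implicit. One small internal wrinkle worth flagging: you (correctly, for the standard Laguerre Jacobian) write the power $\prod_i\lambda_i^{\sum_{e:s(e)=v}M_{t(e)}-M_v}$, yet in your rate function you keep the coefficient $\delta$ on $\int\log|x|\,d\nu$ rather than the $\delta-1$ that your own exponent would produce; the paper's displayed eigenvalue density instead carries $\prod_i\lambda_i^{\sum_{e:s(e)=v}M_{t(e)}}$, which is what makes the stated coefficient $\delta$ come out, so you should reconcile which convention you're using before asserting the final form of $I_t$.
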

We can obtain an equation for
$\nu_t$ by writing that $I_t(\nu_t)\ge I_t(\nu_t^\e)$
with $\nu_t^\e$ the law of $x+\e h(x)$ under $\nu_t$
and $h$ a smooth real-valued  function. Expressing that
the linear term in $\e$ must vanish and ultimately taking $h(x)=(z-x)^{-1}$
we deduce that $\nu_t$ is solution of the Schwinger--Dyson type
equation
$$\left(\int \frac{1}{z-x}d\nu_t(x)\right)^2
+\delta\int \frac{1}{x(z-x)}d\nu_t(x)+\int \frac{P'(x)-1}{z-x} d\nu_t(x)=0$$
which yields with $G_t(z)=\int \frac{1}{z-x}d\nu_t(x), Q(z)
=\int \frac{P'(x)-P'(z)}{z-x} d\nu_t(x)$, and $c=\delta
\int x^{-1}d\nu_t(x)$,
$$G_t(z)^2+ \left(\frac{\delta}{z}+P'(z)-1\right)G_t(z) +Q(z)+\frac{c}{z}=0$$
so that
$$G_t(z)=\frac{1}{2z}\left( z(1-P'(z))-
\delta-\sqrt{ ( z(P'(z)-1)+
\delta)^2 -4z(z Q(z)-c)}\right).$$
It can be shown that
for $t$ small enough, $\nu_t$ has connected 
support and deduce a formula for $G_t$,
see e.g.~\cite{brezin-itzykson-parisi-zuber:planarDiagrams}. Indeed, 
according to \cite[Lemma 2.6.2]{AGZ}, for small $t_i$'s,
$\nu_t$ is characterized by the fact that a certain strictly concave 
function is smaller than some constant outside of
its support and equal to it at the boundary of the support;
this is only possible if the support is connected.
Since $G_t$ must
be analytic outside the support $[a,b]$
of $\nu_t$, the formula for $G_t$ can be uniquely determined
by the fact that there exists a polynomial $R$ with degree $k-1$,
and $a<b$
so that
$$( z(P'(z)-1)+
\delta)^2 -4z(z Q(z)-c)=(z-a)(b-z)R(z)^2$$

\subsection{The \texorpdfstring{$O(n,m)$}{O(n,m)} model}\label{onmsec}  Consider the potential
$$V=
\begin{array}{c}
\begin{tikzpicture}
\draw[rounded corners](0,0) rectangle (2,1.3);
\draw (0.25,1.3) .. controls +(-90:3mm) and +(180:3mm) .. (0.75,0.65) 
	.. controls +(0:3mm) and +(-90:3mm) .. (1.25,1.3);
\draw[densely dotted] (0.75,1.3) .. controls +(-90:3mm) and +(180:3mm) .. (1.25,0.65) 
	.. controls +(0:3mm) and +(-90:3mm) .. (1.75,1.3);
\end{tikzpicture} 
\end{array}
$$ considered in the planar algebra of two stitched Temperley--Lieb algebras (see \SectMark\ref{subsec:coupledTL}) 
and recall the notation of \SectMark\ref{CoupledTLInside}.

In the finite-depth case, the random matrix model is given by considering
block matrices indexed by pairs $(e,v)$ where $e$ is an edge in $\Gamma_{r}$
and $v$ is a vertex in $\Gamma_{b}$ or $e$ is an edge in $\Gamma_{b}$
and $v$ is a vertex in $\Gamma_{r}$ with the potential of the form\begin{eqnarray*}
\sum_{\substack{e\in\Gamma_{r}\\ v\textrm{ black vertex}}}\sigma(e)X_{(e,v)}X_{(e,v)}^{*}+\sum_{\substack{e\in\Gamma_{b}\\ v\textrm{ red vertex}}}\sigma(e)X_{(e,v)}X_{(e,v)}^{*}\\
+\beta\sum_{e\in\Gamma_{r}}\sum_{f\in\Gamma_{b}}\sigma(e)\sigma(f)X_{(e,s(f))}X_{(f,t(e))}X_{(e,t(f))}^{*}X_{(f,s(e))}^{*}.\end{eqnarray*}

If $\delta_{r}=n$ and $\delta_{b}=m$, we can choose $\Gamma_{r}$
(resp., $\Gamma_{b}$) to be the graph with one odd vertex and $n$
(resp., $m$) even vertices, with exactly one edge between the odd
vertex and every even vertex. the resulting combinatorics is exactly
the same as of the $O(n,m)$ model with the potential\[
\beta\sum_{i=1}^{n}\sum_{j=1}^{m}X_{i}Y_{j}X_{i}^{\dagger}Y_{j}^{\dagger}\]
which was studied for instance in \cite{dif}, in connection
with the problem of enumerating meanders. 
%
%
%
%
\subsection{The double cup matrix model}\label{doublecupsec}
The potential is a linear combination of two tangles; 
the tangle with two cups
with black inside (with coefficient $A$) and the tangle
with two cups with  white inside (with coefficient $B$ respectively). We denote by $V$
the element of the planar algebra 
 associated to it, namely (see Example \ref{extangle}):
$$V(X_e, e\in E)=t_2\left(\sum_{e\in E_-}\sigma(e) X_eX_e^*\right)^2
+ t_1 \left(\sum_{e\in E_+}\sigma(e) X_eX_e^*\right)^2.
$$
Diagrammatically, this corresponds to tangles of the form
\begin{center}
\begin{tikzpicture}
\vertexvw{a}(0.2,0);
\node at ([xshift=-0.1cm,yshift=0.2cm]a.north west) {$\scriptstyle X_{(v,u)}$};
\node at ([xshift=-0.7cm,yshift=-0.25cm]a.south west) {$\scriptstyle X_{(u,v)}=X^*_{(v,u)}$};
\node at ([xshift=0.7cm,yshift=0.2cm]a.north east) {$\scriptstyle X_{(w,v)}=X^*_{(v,w)}$};
\node at ([xshift=0.1cm,yshift=-0.25cm]a.south east) {$\scriptstyle X_{(v,w)}$};
\node at (-0.05,0) {$\color{white}\scriptstyle u$};
\node at (0.2,0.2) {$\scriptstyle v$};
\node at (0.47,0) {$\color{white}\scriptstyle w$};
\vertexvb{b}(4.2,0);
\node at ([xshift=-0.1cm,yshift=0.2cm]b.north west) {$\scriptstyle X_{(v,u)}$};
\node at ([xshift=-0.7cm,yshift=-0.25cm]b.south west) {$\scriptstyle X_{(u,v)}=X^*_{(v,u)}$};
\node at ([xshift=0.7cm,yshift=0.2cm]b.north east) {$\scriptstyle X_{(w,v)}=X^*_{(v,w)}$};
\node at ([xshift=0.1cm,yshift=-0.25cm]b.south east) {$\scriptstyle X_{(v,w)}$};
\node at (3.95,0) {$\scriptstyle u$};
\node at (4.2,0.2) {$\color{white}\scriptstyle v$};
\node at (4.47,0) {$\scriptstyle w$};
\end{tikzpicture}
\end{center}
where the only difference between the two pictures is that on the left,
$v\in V_+$ and $u,w\in V_-$, and vice versa on the right.

The associated Gibbs measure $\mu^{M,K}_t(dA)$ is given by
$$\frac{{\bf 1}_{\|A\|\le K} }{Z^{M,K}_t}
e^{ M^2 \sum_{v\in V} (t_21_{v\in V_-}+t_1 1_{v\in V_+})\mu(v)\tr
\left(\sum_{e:s(e)=v}\sigma(e) A_{e}A_{e}^*\right)^2
}\mu^M(dA)$$
To analyze the asymptotics of this measure,
we shall first perform a Hubbard--Stratonovitch transformation
and then study the resulting Gibbs measure. 
We first relate this measure with our problem.
For the sake of simplicity, we restrict ourselves to 
$\delta$'s corresponding to finite graphs, and even further
to the graphs $\mathbb{A}_n$ with $n$ vertices and $n-1$ edges.
This is enough to characterize the generating functions
by analyticity (since the index $\delta$ corresponding to
these graphs has an accumulation point at $2$). In fact the restriction
to finite graphs allows us to avoid dealing with a Gibbs measure on infinitely
many matrices whereas the restriction to $\mathbb{A}_n$ ensures the uniqueness
of the minimizers of the entropy described in Proposition \ref{propldp},
and therefore their characterization.

\subsubsection{The partition function and an auxiliary matrix 
model}\label{pottsmatrixmodel}
Let us first consider the partition function
$$Z^{M,K}_t=
\int {\bf 1}_{\|X^M\|\le K} 
e^{ M^2 \sum_{v\in V} (t_21_{v\in V_-}+t_1 1_{v\in V_+})\mu(v)\tr
\left(\sum_{e:s(e)=v}\sigma(e) A_{e}A_{e}^*\right)^2
}\mu^M(dA)$$
and introduce  independent
 $M_v\times M_v$ matrices  $G_v$ from the GUE with covariance $1/M_v$.
Then, assuming $t A,t B$ positive 
and putting  $\alpha(v)=\sqrt{8t_2}$ if $v\in V_-$,
$\alpha(v)= \sqrt{8t_1}$ if $v\in V_+$, $\mu(v)'=\mu(v)\sqrt{M_v(\mu(v)M)^{-1}}$ (which approximately equals $\mu(v)$),
we get
$$Z^{M,K}_t=
\int {\bf 1}_{\|A\|\le K} 
e^{-\frac{M}{2} \sum_{v\in V} \alpha(v) \mu(v)' \Tr
\left(G_v \sum_{e:s(e)=v}\sigma(e) A_{e}A_{e}^*\right)
}\mu^M(dA, dG)
.$$
Note that again for $\alpha(v)$ small enough, the
integral has a strictly log-concave density and therefore Brascamp-Lieb inequalities show that the matrices $G_v$ are also
bounded with large probability and so there exists $K'$ large enough 
so that, we have
$$Z^{M,K}_t\sim
\int {\bf 1}_{\|G\|\le K'} 
e^{-\frac{M}{2} \sum_{v\in V}\alpha(v)\mu(v)' \Tr
\left(G_v \sum_{e:s(e)=v}\sigma(e) A_{e}A_{e}^*\right)
}\mu^M(dA, dG)
$$
where we used the standard notation $A_M\sim B_M$, for
two sequences $A_M, B_M$,  for a shorthand for
$A_MB_M^{-1}$ converges to one as $M$ goes to infinity.
Diagrammatically this corresponds to the following ``breaking'' of the tangles:
\begin{center}
\begin{tikzpicture}
\vertexvw{a}(0.2,0);
\node at ([xshift=-0.1cm,yshift=0.2cm]a.north west) {$\scriptstyle X_{(v,u)}$};
\node at ([xshift=-0.1cm,yshift=-0.25cm]a.south west) {$\scriptstyle X_{(u,v)}$};
\node at ([xshift=0.1cm,yshift=0.2cm]a.north east) {$\scriptstyle X_{(w,v)}$};
\node at ([xshift=0.1cm,yshift=-0.25cm]a.south east) {$\scriptstyle X_{(v,w)}$};
\draw[->] (1,0) -- (2,0);
\halfvertexwl{a1}(2.6,0)
\halfvertexwr{a2}(4.6,0)
\node at ([xshift=-0.1cm,yshift=0.2cm]a1.north west) {$\scriptstyle X_{(v,u)}$};
\node at ([xshift=-0.1cm,yshift=-0.25cm]a1.south west) {$\scriptstyle X_{(u,v)}$};
\node at ([xshift=0.1cm,yshift=0.2cm]a2.north east) {$\scriptstyle X_{(w,v)}$};
\node at ([xshift=0.1cm,yshift=-0.25cm]a2.south east) {$\scriptstyle X_{(v,w)}$};
\draw ([yshift=0.05cm]a1.east) -- ([yshift=0.05cm]a2.west) node[above=-2pt,pos=0.25] {$\scriptstyle G_v$} node[above=-2pt,pos=0.75] {$\scriptstyle G_v$};
\draw ([yshift=-0.05cm]a1.east) -- ([yshift=-0.05cm]a2.west);
\vertexvb{b}(7,0);
\node at ([xshift=-0.1cm,yshift=0.2cm]b.north west) {$\scriptstyle X_{(v,u)}$};
\node at ([xshift=-0.1cm,yshift=-0.25cm]b.south west) {$\scriptstyle X_{(u,v)}$};
\node at ([xshift=0.1cm,yshift=0.2cm]b.north east) {$\scriptstyle X_{(w,v)}$};
\node at ([xshift=0.1cm,yshift=-0.25cm]b.south east) {$\scriptstyle X_{(v,w)}$};
\draw[->] (7.8,0) -- (8.8,0);
\halfvertexbl{b1}(9.4,0)
\halfvertexbr{b2}(11.4,0)
\node at ([xshift=-0.1cm,yshift=0.2cm]b1.north west) {$\scriptstyle X_{(v,u)}$};
\node at ([xshift=-0.1cm,yshift=-0.25cm]b1.south west) {$\scriptstyle X_{(u,v)}$};
\node at ([xshift=0.1cm,yshift=0.2cm]b2.north east) {$\scriptstyle X_{(w,v)}$};
\node at ([xshift=0.1cm,yshift=-0.25cm]b2.south east) {$\scriptstyle X_{(v,w)}$};
\draw[braid,double=gray] (b1.east) -- (b2.west) node[above=-2pt,pos=0.25] {$\scriptstyle G_v$} node[above=-2pt,pos=0.75] {$\scriptstyle G_v$};
\end{tikzpicture}
\end{center}

We next integrate over the matrices 
$X^M$, recalling that the entries of $A_e$ have covariance $(M_{s(e)}M_{t(e)})^{-1/2}$. 
Up to a constant, this provides the term
$$\prod_{e\in E_+} e^{- \Tr\otimes
 \Tr(\log(I+\alpha(s(e))'I\otimes G_{s(e)}+
\alpha(t(e))' G_{t(e)}\otimes I))}$$
where we noticed that $\int e^{-\gamma x^2}dx=\sqrt{2\pi}
\gamma^{-\frac 1 2}$ and the matrices $X_e$ have complex 
entries (so that each term appears twice). Here $$\alpha(s(e))'\alpha(s(e))^{-1}=
\alpha(t(e))'\alpha(t(e))^{-1}=
 (\mu(t(e))'/ \mu(t(e)))^{1/2}$$ is approximately equal to one.
We can finally diagonalize the matrices $G_v$ to get
\begin{multline*}
Z^{M,K}_t\sim \int
 \prod_{v\in V} 1_{|\lambda^v|\le K'}
\Delta(\lambda^v) d\lambda^v  \exp\left[-\sum_{v\in V} \frac{M_v}{2} \sum_{i=1}^{M_v}( \lambda_i^v)^2\right]\\ \cdot
\prod_{e\in E_+} \exp\left[ -
\sum_{\substack{1\le i\le M_{s(e)}\\ 1\le j\le M_{t(e)}}} \log(1+\alpha(s(e)) \lambda_i^{s(e)}+
\alpha(t(e)) \lambda_j^{t(e)})\right] 
\end{multline*}
with $\lambda^v =(\lambda_1^v,\ldots,\lambda_{M_v}^v)$
the eigenvalues of $G_v$ and $\Delta(\lambda^v)=\prod_{1\le i\neq j\le M_v}
|\lambda_i^v-\lambda_j^v|$. The asymptotics of $\frac{1}{M^2}\log Z^{M,K}_t$
can be obtained via the 
 global asymptotics of the eigenvalues $(\lambda_v, v\in V)$,
that is the convergence
$$\lim_{M\ra \infty} E[\frac{1}{M_v}\sum_{i=1}^{M_v}
(\lambda_i^v)^p] =\int x^p d\nu_\nu(x)\qquad p\in\mathbb N,\quad v\in V_\pm.$$
under the associated Gibbs measure $P^{M,K}_t(d\lambda)$ given by
\begin{multline*}
\frac{ 1_{|\lambda^v|\le K'}}{Z^{M,K}_t}
 \prod_{v\in V}
\Delta(\lambda^v) d\lambda^v   \cdot \prod_{ e\in E_+} \exp\Bigg[ -
\sum_{i=1}^{M_{s(e)}}\sum_{j=1}^{M_{t(e)}}  \log\Big(1+\alpha(s(e)) \lambda_i^{s(e)}\\ +
\alpha(t(e)) \lambda_j^{t(e)}\Big)\Bigg] \cdot
\exp\left[-\sum_{v\in V}\frac{ M_v}{2} \sum_{i=1}^{M_v}( \lambda_i^v)^2\right].\end{multline*}

In the sequel, we shall prove not only
this convergence but the existence of two
probability measures $\nu_-$ and $\nu_+$
so that

\begin{equation}\label{convsm}
\int x^p d\nu_v(x)=\int x^p d\nu_\pm(x)\qquad p\in\mathbb N,\quad v\in V_\pm\,.
\end{equation}

Before attacking this question, let us summarize 
what information the auxiliary 
probability measure $P^{M,K}_t$ 
and \eqref{convsm}
tells us about our original
question. Recall that $\ourtrt$ is the
tracial states constructed 
with the two tangles with two strings and opposite
shading. We claim that $\nu_\pm$ gives the law
of a cup under $\ourtrt$ in the following sense.
\begin{proposition}\label{prop:conv}
Assume \eqref{convsm} and recall that $\alpha=\sqrt{8t_2}$.
Let $B_n$ be the tangle with $n$ non nested strings
and black shading inside and put for small $\gamma$
$$C(\gamma,A,B)=\sum_{n\ge 0}\gamma^n\ourtrt(B_n)$$
and $M(z)=\int \sum_{n\ge 0} z^n x^n d\nu_+(x)$.
Then,  $\gamma(z)=\frac{\sqrt{8 t_2} z}{1-z^2 M(z)}$
is invertible from a neighborhood of
the origin into a neighborhood of the
origin, with inverse $z(\gamma)$ and
$$C(\gamma,A,B)=\frac{ \alpha z(\gamma)}{\gamma} M( z(\gamma))=\frac{ \alpha }{\gamma z(\gamma)}\left( 1-\frac{\alpha z(\gamma)}{\gamma}\right)
.$$
\end{proposition}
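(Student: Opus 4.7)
The strategy is to reduce the generating function $C(\gamma,A,B)$ to a resolvent of the matrix $Z_v=Y_vY_v^*$ under the Gibbs measure $\mu_t^{M,K}$, apply the Hubbard--Stratonovich decoupling from the preceding paragraphs to obtain a conditional Gaussian description of $Y_v$, and then compute the large-$M$ limit of the conditional resolvent by a subordination argument. As a first step, note that by Section~\ref{cupmodsec}, the element $B_n$ is represented in $\mathcal{P}^\Gamma$ by $(\sum_{e:s(e)=v}\sigma(e)X_eX_e^*)^n=Z_v^n$ at any $v\in V_+$, so Propositions~\ref{propfinite}--\ref{propinfinite} yield $\ourtrt(B_n)=\lim_{M\to\infty}\mathbb{E}_{\mu_t^{M,K}}[\tfrac{1}{M_v}\Tr(Z_v^n)]$. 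On the cutoff set $\{\|A\|_\infty\le K\}$ we have $\|Z_v\|\le\delta K^2$ uniformly in $M$, so for $|\gamma|$ small enough the geometric series can be summed uniformly to give
\begin{equation*}
C(\gamma,A,B)=\lim_{M\to\infty}\mathbb{E}_{\mu_t^{M,K}}\!\left[\tfrac{1}{M_v}\Tr\!\bigl((I-\gamma Z_v)^{-1}\bigr)\right].
\end{equation*}

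Next I would invoke the HS transformation already carried out above: introducing auxiliary GUE matrices $(G_v)_{v\in V}$ linearly coupled to the $Z_v$'s via $-\tfrac{M}{2}\alpha(v)\mu(v)'\Tr(G_v Z_v)$, the measure $\mu_t^{M,K}$ becomes, conditional on $(G_v)$, a Gaussian in the $A_e$'s whose entries in the joint eigenbasis of $G_{s(e)},G_{t(e)}$ are independent with variances proportional to $(1+\alpha(s(e))\lambda_i^{s(e)}+\alpha(t(e))\lambda_j^{t(e)})^{-1}$ -- exactly the structure producing the effective measure $P_t^{M,K}$ derived above on the eigenvalues of the $G_v$'s. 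The conditional resolvent $r(\gamma\mid G)=\tfrac{1}{M_v}\Tr((I-\gamma Z_v)^{-1})$ is then to be computed by a Gaussian integration-by-parts (Dyson--Schwinger) argument on the $A_e$'s, yielding a self-consistent equation that, in the large-$M$ limit and after using~\eqref{convsm} to replace the empirical spectra of the $G_v,G_w$ by $\nu_+,\nu_-$, reduces to a scalar subordination identity.

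The claim is that this subordination identity is exactly $C(\gamma,A,B)=\alpha z M(z)/\gamma$ together with $\gamma=\alpha z/(1-z^2M(z))$, with $\alpha=\sqrt{8t_2}$ the HS constant associated to the $V_-$ side. Combinatorially, each loop returning to $v\in V_+$ decomposes into alternating outbound steps into $V_-$ neighbors (each contributing $\alpha z$ from the linear HS coupling) and ``round trips'' through the auxiliary $\nu_+$ spectrum (each contributing $z^2M(z)$), producing the geometric series $\gamma=\alpha z/(1-z^2M(z))$ for the external edge count and $\alpha z M(z)/\gamma$ for the cup-generating function. Local invertibility of $\gamma(z)$ at the origin follows from $\gamma'(0)=\alpha\neq 0$, giving an analytic inverse $z(\gamma)$ and absolute convergence of the series defining $C(\gamma,A,B)$ for small $\gamma$; the equivalent expression $C(\gamma,A,B)=(\alpha/(\gamma z))(1-\alpha z/\gamma)$ then follows by substituting $z^2M(z)=1-\alpha z/\gamma$ (from the definition of $\gamma(z)$) into $\alpha zM(z)/\gamma=\alpha/(\gamma z)-\alpha^2/\gamma^2$.

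The main obstacle is the actual derivation of the scalar subordination identity from the Dyson--Schwinger equation for $r(\gamma\mid G)$. Because the conditional variance profile $(1+\alpha(v)\lambda_i^v+\alpha(w)\lambda_j^w)^{-1}$ of $Y_v$ is not separable, a direct asymptotic analysis produces an equation in which both $\nu_+$ and $\nu_-$ appear; reducing it to an equation involving only $M(z)=\int(1-zx)^{-1}d\nu_+(x)$ requires the Euler--Lagrange equations of the variational characterization of $(\nu_+,\nu_-)$ stated in the theorem above (which express $\nu_-$ as a subordinate transform of $\nu_+$), together with the Perron--Frobenius identity $\sum_{w\sim v}\mu(w)=\delta\mu(v)$ to collapse the edge sum at $v$ into a single scalar integral. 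Tracking the constants $\alpha$, $\mu(v)$, $\sigma(e)$ through this bookkeeping is the technical heart of the argument.
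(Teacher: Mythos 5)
There is a genuine gap: the key functional identity $\gamma(z)=\alpha z/(1-z^2M(z))$ together with $C(\gamma,A,B)=\alpha z M(z)/\gamma$ is only \emph{stated} (``The claim is that this subordination identity is exactly\dots''), with a heuristic last-return decomposition sketched in one sentence, and you explicitly flag the derivation as ``the main obstacle'' and ``the technical heart of the argument'' without carrying it out. The closing algebra (invertibility of $\gamma(z)$ from $\gamma'(0)=\alpha\neq 0$, and the rewriting $\alpha z M(z)/\gamma = (\alpha/(\gamma z))(1-\alpha z/\gamma)$ via $z^2M(z)=1-\alpha z/\gamma$) is correct, but it presupposes the relation you never establish. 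As a proof, the proposal is therefore incomplete at exactly the step that matters.

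Your proposed route --- conditioning on the Hubbard--Stratonovich fields, writing a Dyson--Schwinger equation for the conditional resolvent $\tfrac{1}{M_v}\Tr((I-\gamma Z_v)^{-1})$, and trying to reduce it by subordination to a scalar relation in $\nu_+$ --- is plausible in principle but would be quite delicate: as you note, the conditional variance profile $(1+\alpha(s(e))\lambda_i+\alpha(t(e))\lambda_j)^{-1}$ is not separable, so the equation you would obtain involves both $\nu_+$ and $\nu_-$ coupled nonlinearly, and disentangling it requires the full Euler--Lagrange/Schwinger--Dyson equations of the auxiliary model (which are Lemma~\ref{propmin}, proved only in the following subsection). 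The paper instead bypasses the analytic machinery at this juncture and argues diagrammatically: it extends the tracial state to a planar algebra generated by both ``strings'' and auxiliary ``strips'' (the strips encoding the HS fields $G_v$), introduces the enumeration $C(p,n,\ell,k)$ of configurations built on a tangle $B_{n,p}$ with $n$ non-nested black cups and $p$ white strips using $\ell$ (white) and $k$ (black) one-string-one-strip vertices, derives an explicit recursion for $C(p,n,\ell,k)$ by gluing the first strip, passes to the four-variable exponential generating function $C(z,\gamma,\alpha,\beta)$, and then uses the analyticity of $C$ in $(z,\gamma)$ --- a kernel-method step: if the denominator $\gamma-\alpha z-\gamma z^2 C(z,0,\alpha,\beta)$ vanishes, so must the numerator --- to read off the stated relation. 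Your ``last return'' intuition is exactly what this recursion makes precise, but the careful bookkeeping of how a glued strip either closes onto another strip of the tangle (yielding the $z^2 C(z,0,\cdot)C(z,\gamma,\cdot)$ term, i.e.\ $z^2 M(z)$ in the special case) or onto a vertex with a string of the correct shading (yielding the $(\alpha z/\gamma)$ term) is what you would need to supply, and is what the paper does.

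A secondary point worth being careful about: you repeatedly use ``$C(\gamma,A,B)=\lim_M \mathbb E[\tfrac1{M_v}\Tr((I-\gamma Z_v)^{-1})]$'' as if the generating function in $\gamma$ directly is a resolvent trace before the limit is taken. This is fine under the cutoff, but you should be explicit that one first establishes uniform-in-$M$ geometric bounds on $\ourtrt(B_n)$ (from $\|Z_v\|\le \delta K^2$) to justify the $\sum_n\gamma^n$/$\lim_M$ interchange; the paper handles this implicitly through convergence of the extended tracial state $\mytau_t$ on the string-and-strip planar algebra in the small-parameter regime.
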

\begin{proof}
Observe first that $P^{M,K}_t$ is the law of the eigenvalues
of $(G_v^M,v\in V)$ under
$$\nu^{M,K}_t(dA,dG)=\frac{{\bf 1}_{\|G\|\le K'}}{Z^{M,K'}_t} 
\exp\Bigg\{-\frac{M^2}{2}  \sum_{v\in V}\alpha(v)  \tr
\Big(G_v \sum_{s(e)=v}\sigma(e) A_{vt}A_{vt}^*\Big)
\Bigg\} \mu^M(dA,dG).$$
Adapting the previous considerations we see that 
$G_v \sum_{e:s(e)=v}\sigma(e) X_{e}X_{e}^*$ correspond
to an element of a planar
algebra  with one string and one strip, the strip being in the
white shading iff $v\in V_-$, both being independent in the sense that
they can not be glued together, and the strips
requiring to be coupled with another strip
corresponding to the same vertex, the weight being one. We can also
show the convergence (in a small parameters 
regime) of the law $\nu^{M,K}_t$ restricted to
the planar algebra generated by elements with non-crossing strings and
strips. We denote
the limit by $\mytau_t$. $\ourtrt$ corresponds to the
case where we restrict ourselves to
elements  with only strings (since
then expectation over the strips, that is the Gaussian variables 
can be taken) whereas $\nu_\pm$ corresponds to
restricting ourselves to element
with   strips only (inside a white or a black shading).
To relate both states let us consider the
expectation of an element $B_{n,p}$ with $n$ non nested cups
with black shading inside, followed by $p$ strips in the white region.
Let $C(p,n,\ell,k)$ be the number of possible configurations
build above this tangle with $\ell$ (resp. $k$) tangles 
with one string and one
strip in the white (resp. black) shaded region.
We get an induction relation by gluing the
first strip which yields for $ p\ge 1$,
\begin{eqnarray*}
C(p,n,\ell,k)&=&\sum_{p_1=0}^{p-2} \sum_{\ell_1\le \ell}\sum_{k_1\le
k}C^{k_1}_kC^{\ell_1}_\ell C(p_1,0,\ell_1,k_1)C(p-p_1-2,n,\ell-\ell_1,k-k_1)\\
&&\qquad\qquad +\ell C(p-1,n+1,\ell-1,k).\\
\end{eqnarray*}
\begin{center}
\begin{tikzpicture}
\draw[rounded corners] (0,-1) rectangle (6.6,0);
\draw[fill=gray] (0.4,0) .. controls (0.4,-0.6) and (1.2,-0.6) .. (1.2,0);
\draw[fill=gray] (1.6,0) .. controls (1.6,-0.6) and (2.4,-0.6) .. (2.4,0);
\draw[fill=gray] (2.8,0) .. controls (2.8,-0.6) and (3.6,-0.6) .. (3.6,0);
\draw[decorate,decoration=brace] (3.6,-0.5) -- (0.4,-0.5) node[pos=0.5,auto=left] {$n$};
\draw[decorate,decoration=brace] (6.2,-0.5) -- (4.2,-0.5) node[pos=0.5,auto=left] {$p$};
\draw[decorate,decoration=brace] (0.8,2.2) -- (3.4,2.2) node[pos=0.5,auto=left] {$\ell$};
\draw[decorate,decoration=brace] (-0.6,0.6) -- (-0.6,1.4) node[pos=0.5,auto=left] {$k$};
\halfvertexwl{a}(1,1.6)
\halfvertexwr{b}(2,1.6)
\halfvertexwl{c}(3.2,1.6)
\vertexvb{d}(0,1)
\draw[braid] (4.2,-0.4) -- (4.2,0) .. controls ++(0,0.5) and ++(0.5,0) .. (c.east);
\draw[braid] (4.7,-0.4) -- (4.7,0) .. controls ++(0,0.3) and ++(0,0.3) .. (5.2,0) -- (5.2,-0.4);
\draw[braid] (5.7,-0.4) -- (5.7,0) .. controls ++(0,0.3) and ++(0,0.3) .. (6.2,0) -- (6.2,-0.4);
\draw ([yshift=0.05cm]a.east) -- ([yshift=0.05cm]b.west);
\draw ([yshift=-0.05cm]a.east) -- ([yshift=-0.05cm]b.west);
\draw[connect] (3.6,0) .. controls ++(0,0.5) and ++(-0.3,-0.3) .. (c.south west) .. controls (c.east) .. (c.north west) [bend right] to (b.north east) .. controls (b.west) .. (b.south east) .. controls ++(0.3,-0.3) and ++(0,0.5) .. (1.6,0) -- (2.4,0) .. controls ++(0,0.3) and ++(0,0.3) .. (2.8,0);
\draw[connect] (0.4,0) .. controls ++(0,0.5) and ++(-0.3,-0.3) .. (d.south west) .. controls (d) .. (d.north west) .. controls ++(-0.3,0.3) and ++(-0.3,0.3) .. (a.north west) .. controls (a.east) .. (a.south west) .. controls ++(-0.2,-0.2) and ++(0.2,0.2) .. (d.north east) .. controls (d) .. (d.south east).. controls ++(0.3,-0.3) and ++(0,0.5) .. (1.2,0);
\end{tikzpicture}
\end{center}
The first term appears when the strip is glued 
with another strip of the tangle whereas the second one shows up
when it is glued with a strip of an element with a strip
and a string with the right shading.

We let
$$C(z,\gamma,\alpha,\beta)=\sum \frac{z^p \gamma^n \alpha^\ell\beta^k}
{\ell! k!} C(p,n,\ell,k)$$
and conclude from the induction relation that
$$C(z,\gamma,\alpha,\beta)-C(0,\gamma,\alpha,\beta)
=z^2 C(z,0,\alpha,\beta)C(z,\gamma,\alpha,\beta)
+\frac{\alpha z}{\gamma}(C(z,\gamma,\alpha,\beta) -C(z,0,\alpha,\beta))$$
which gives:
$$C(z,\gamma,\alpha,\beta)=\frac{\gamma C(0,\gamma,\alpha,\beta)-\alpha 
zC(z,0,\alpha,\beta)}{\gamma-\alpha z-\gamma z^2 C(z,0,\alpha,\beta)}$$
Since $C(z,\gamma,\alpha,\beta)$ is analytic in $z,\gamma$ small enough,
we deduce that if the denominator vanishes so that 
$$\gamma=(1-z^2 C(z,0,\alpha,\beta))^{-1}\alpha z=:\gamma(z)$$
then the numerator must vanish too. Therefore,
 with $z(\gamma)$ the inverse of $\gamma(z)$ 
(which exists by the implicit function theorem in a neighborhood
of the origin) we deduce 
$$C(0,\gamma,\alpha,\beta)=\frac{\alpha z(\gamma)}{\gamma}C(z(\gamma), 0,\alpha,\beta).$$
Since if we choose $\alpha=\sqrt{8t_2},\beta=\sqrt{8t_1}$,
we have $C(0,\gamma,\alpha,\beta)=C(\gamma,A,B)$ and
$C(z,0,\alpha,\beta)=M(z)$, we have proved the proposition.
\end{proof} 
\subsection{Solving the auxiliary matrix model}\label{subsec:auxmm}
In this section we study the law $P^{M,K}_t$
and prove \eqref{convsm}.
\subsubsection{Large deviation estimates and limit points}

Using standard large deviation theory (see \cite[Section 2.6]{AGZ}), and putting
$$\Sigma(\mu,\nu):=\iint \log|x-y|d\mu(x)d\nu(y),\quad
\Sigma(\mu):=\Sigma(\mu,\mu)\,,$$
 we deduce that

\begin{proposition}\label{propldp} Set $\alpha(v)=\sqrt{8t_2}$ (resp. $=\sqrt{8t_1}$) if  $v\in V_-$ (resp. $v\in V_+$).

 and $\beta=\sqrt{8t_1}$.
The law of the spectral measures $$\left(\frac{1}{M_v}\sum_{i=1}^{M_v}\delta_{\lambda_i^v}, v\in V\right)$$ of the matrices $G_v, v\in V$ under $P^{M,K}_t$
 satisfies
a large deviation principle in the scale $M^2$ with good
rate function 
\begin{multline*}
I_t(\nu_v, v\in V):=\frac{1}{2} \sum_{v\in V} \mu(v)^2 \int x^2 d\nu_v(x)
-
 \sum_{v\in V}\mu(v)^2 \Sigma(\nu_v)
\\
 + \sum_{e\in E_+}
\mu(t(e))\mu(s(e)) \int \log(1+\alpha(s(e)) x +\alpha(t(e)) y)d\nu_{s(e)}
(x)d\nu_{t(e)}(y)
.\end{multline*}
Take $\Gamma=\mathbb{A}_n, n\ge 1$.
Then, $I_t$ achieves its minimal value at a unique
point so that $\nu_v=\nu_\pm$ if $v\in V_\pm$ with $(\nu_+,\nu_-)$
the unique minimizer of 
$$S(\nu_+,\nu_-)=
\sum_{\e=\pm}\left(\frac{1}{2}\int x^2d\nu_\e(x)-\Sigma(\nu_\e)\right)
+\delta \iint \log|1+\alpha x+\beta
y|d\nu_+(x)d\nu_-(y).$$ 
In particular, for any $p\ge0$, we have
$$\lim_{M\ra\infty}E\left[\frac{1}{M_v}\sum_{i=1}^{M_v}({\lambda_i^v})^p\right]
=\int x^p d\nu_\pm(x) \quad \textrm{ if } v\in V_\pm$$
\end{proposition}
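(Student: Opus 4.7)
The plan is to prove the stated LDP by standard log-gas techniques, then to identify the unique minimizer of $I_t$ via a symmetry ansatz that exploits the Perron--Frobenius eigenvalue identity, and finally to upgrade exponential concentration to convergence of moments.

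For the LDP, I would write the density of $P^{M,K}_t$ in exponential form and check that each ingredient of $I_t$ appears naturally after normalization by $M^2$: the quadratic potential gives $\tfrac{1}{2}\sum_v \mu(v)^2 \int x^2 d\nu_v$ (using $M_v/M \to \mu(v)$), the Vandermonde $\Delta(\lambda^v)$ gives $-\sum_v \mu(v)^2 \Sigma(\nu_v)$, and the Gaussian-integration factor $\exp(-\sum_{e,i,j}\log(1+\alpha(s(e))\lambda_i^{s(e)}+\alpha(t(e))\lambda_j^{t(e)}))$ gives the third, edge-sum term. The hard cutoff $\|G\|\le K'$ provides exponential tightness, and for $t$ small the argument of the logarithm stays strictly positive on $[-K',K']^2$, so the logarithm is bounded and Lipschitz. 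The LDP then follows from the product log-gas LDP \cite[Section 2.6]{AGZ} combined with a bounded continuous coupling.

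To identify the minimizer I would compute the Euler--Lagrange equation of $I_t$ at vertex $v$: varying $\nu_v$ under the mass constraint and dividing through by $\mu(v)^2$ yields, on the support of $\nu_v$,
\begin{equation*}
\frac{x^2}{2} - 2\int \log|x-y| d\nu_v(y) + \sum_{v'\sim v} \frac{\mu(v')}{\mu(v)} \int \log|1+\alpha(v)x+\alpha(v')y| d\nu_{v'}(y) = C_v.
\end{equation*}
For $\Gamma=\mathbb{A}_n$ the bipartite structure forces every neighbor $v'$ of $v\in V_\pm$ to lie in $V_\mp$, and $\alpha(v')$ depends only on this parity. Crucially, the Perron--Frobenius identity $\sum_{v'\sim v}\mu(v')=\delta\mu(v)$ holds at \emph{every} vertex, including the two boundary ones (where it is literally the eigenvalue equation $\mu(2)=\delta\mu(1)$ and $\mu(n-1)=\delta\mu(n)$). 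Under the ansatz $\nu_v=\nu_\pm$ for $v\in V_\pm$, the coefficient $\sum_{v'\sim v}\mu(v')/\mu(v)$ collapses to $\delta$ and the $v$-dependence disappears entirely; the EL system reduces to the single pair of equations characterizing the minimizer of $S(\nu_+,\nu_-)$. Thus any minimizer of $S$ lifts, via the ansatz, to a critical point of $I_t$.

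Uniqueness comes from strict convexity: the free-entropy part $\tfrac{1}{2}\int x^2 - \Sigma$ is strictly convex on probability measures with finite second moment, and for $|t|$ small the bilinear log-interaction (bounded and smooth on $[-K',K']^2$) is a small perturbation that does not destroy strict convexity. Hence both $S$ and $I_t$ admit unique minimizers, and the minimizer of $I_t$ must coincide with the lift of $(\nu_+,\nu_-)$. Combined with the LDP, exponential concentration at this unique minimizer yields weak convergence of the empirical spectral measures and, because the support is bounded uniformly, convergence of all moments, giving the last statement. The main obstacle I anticipate is the quantitative strict-convexity estimate for the log-interaction: $\iint\log|1+\alpha x+\beta y|d\mu(x)d\nu(y)$ is bilinear and not manifestly convex, so one must expand the logarithm to second order and absorb the $O(t)$ perturbation against the strictly convex background $\sum_v \mu(v)^2(\tfrac12\int x^2 - \Sigma(\nu_v))$, uniformly on the relevant compact set of measures. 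This is precisely the same strict log-concavity input already used to establish Brascamp--Lieb estimates elsewhere in the paper.
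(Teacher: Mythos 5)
Your three-step plan matches the paper's structure, and the first two steps (the log-gas LDP via \cite[Section~2.6]{AGZ} and the reduction of the Euler--Lagrange system via the Perron--Frobenius identity $\sum_{v'\sim v}\mu(v')=\delta\mu(v)$ to the two-component system defining $S(\nu_+,\nu_-)$) track the paper's argument closely, including the observation that the identity holds at the boundary vertices of $\mathbb{A}_n$.

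The uniqueness step, however, is where your proposal diverges and where the gap you yourself flag is real. You treat the bilinear log-interaction as a ``small perturbation'' of the strictly convex part $\sum_v \mu(v)^2(\tfrac12\int x^2-\Sigma(\nu_v))$. The difficulty is that the Hessian of $-\Sigma$ is positive but not bounded below in any of the natural strong topologies on signed measures (its spectrum accumulates at $0$ at high frequency), so ``$O(t)$ perturbation of a strictly convex functional'' is not a self-contained argument; one would have to run a frequency-by-frequency comparison between the singular logarithmic kernel and the smooth interaction kernel, and you do not supply that estimate. The paper avoids this entirely by an exact algebraic observation: under the affine change of variables $\tilde x=-\alpha x$ (on $V_+$), $\tilde y=1+\beta y$ (on $V_-$), the interaction kernel becomes $\log|\tilde y-\tilde x|$, \emph{the same} logarithmic kernel appearing in $\Sigma$. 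The functional $H$ then collapses into a single quadratic form in the $\Sigma$-kernel weighted by the graph quadratic form
\[
Q(x)=\sum_{v} x_v^2-\sum_{e\in E_+} x_{s(e)}x_{t(e)}
=\tfrac12\sum_{e}(x_{s(e)}-x_{t(e)})^2+\sum_v x_v^2\Bigl(1-\tfrac{d(v)}{2}\Bigr),
\]
which is positive definite for $\mathbb{A}_n$ because $d(v)\le 2$ with strict inequality at the endpoints. Diagonalizing $Q$ writes $H=-\sum_i \gamma_i\,\Sigma(\sum_u v_i(u)\mu(u)p_u)$ with $\gamma_i>0$, and strict concavity of $\Sigma$ \cite{BAG97} on each linear combination gives strict convexity of $H$ exactly, with no smallness hypothesis. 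This is not only cleaner but also identifies why $\Gamma=\mathbb{A}_n$ matters (it is precisely the degree bound $d(v)\le2$), a structural fact your perturbative route would not have surfaced. To repair your approach you would need either to carry out the frequency-matching estimate you gesture at, or better, to notice the affine substitution that turns the interaction into a $\Sigma$-term in the first place.
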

\begin{remark} \label{rem1}By large deviation
techniques as in \cite{BAG97}, it is easy to
check that $(\nu_+,\nu_-)$
is also the limit of the spectral measures
of the two Hermitian $M\times M$ random matrices $G_+,G_-$
with joint law given by
$$ \frac{1_{\|G_\pm\|\le K'}}{Z^{M,\delta}}\exp\left[-\delta  \Tr\otimes\Tr
\log(I +\alpha I\otimes G_+ +\beta G_-\otimes I)\right] \exp\left[-\frac{M}{2}
\Tr(G_+^2+G_-^2)\right]dG_+ dG_-$$
when $K'$ is large enough.
This last formula is easily obtained for integer values of $\delta$
starting from the graphical rules of the double cup matrix model, 
and usually in the physics literature
such expressions are then analytically continued to $\delta$ non integer.

\end{remark}
{\bf Proof of Proposition \ref{propldp}.}
The first point is to show the uniqueness of the minimizers
of $I_t$. We  let  $\tilde\nu_v$ be the law of 
$-\alpha x$ (resp. $1+\beta y$)
under $\nu_v$ for $v\in V_+$ (resp. $v\in V_-$).
Then, we have to minimize
$$ I_t( \nu_v,v\in V):= H(\tilde \nu_v,v\in V)+
L(\tilde \nu_v,v\in V)$$
with, for probability measures $p_v, v\in V$ on the real line, 
 $$H(p_v, v\in V):=
 \sum_{e\in
 E_+}
\mu(t(e))\mu(s(e)) \Sigma( p_{s(e)}, p_{t(e)})
-\sum_{v\in V} \mu(v)^2\Sigma(
 p_v)
$$
and 
$$L(p_v,v\in V):= \frac{1}{2}\sum_{v\in E_-} ( \frac{\mu(v)}{\alpha})^2 \int x^2 dp_v(x)+\sum_{v\in E_+} 
 (\frac{\mu(v)}{\beta})^2 \int \left(1- x\right)^2  dp_v(x).$$
Since  $I_t$ is a good rate
function, it  has compact level sets (see the case $V=\{0\}$
in \cite{BAG97}) and therefore $I_t$ achieves its maximal value.
We next prove that its maximizer is unique.
Note that $L$ is linear in the measures. We shall
prove that $H$  is strictly convex. Indeed, put
$$d(v)= \sharp\{e\in E_+:s(e)=v\}+
 \sharp\{e\in E_+:t(e)=v\}$$
and observe that when $\Gamma\subset \mathbb{A}_\infty$
the degree $d(v)$ of each vertex is bounded by  one (for the boundary points)
or by two. Therefore, the quadratic form
$$Q(x)=\sum_{v\in V} x_v^2-\sum_{ e\in E_+}
 x_{s(e)}x_{t(e)}=\frac{1}{2}\sum_{ e\in E_+}
 (x_{s(e)}-x_{t(e)})^2
+\sum_{v\in V} x_v^2 (1-\frac{d(v)}{2})
$$
is positive definite. We let $(\gamma_i, v_i)$ be the eigenvalues
and eigenvectors of the corresponding matrix  in $\mathbb R^{+*}\times \mathbb R^{|V|}$ and write

$$H(p_v,v\in V)=-\sum_{i}\gamma_i \Sigma(\sum_{u\in V}
v_i(u)\mu(u) p_u)
.$$
Finally $ \Sigma $
is strictly log-concave on the set of real valued  measures with fixed
mass, as was proved  in \cite{BAG97}.  
Applying it to the
measure  $p_i=\sum_{u\in V} \mu(u) v_i(u)p_u$ with given mass $
\sum_{u\in V} v_i(u)\mu(u)$ and using that
the $\gamma_i$'s are positive,  we deduce 
that $H$ is strictly convex.
Therefore, $I_t$ (and by the
same argument  $S$) achieves its minimal value 
at a unique point $(\nu_v,v\in V)$ (resp. $(\nu_+,\nu_-)$).
We next show that it has to be $\nu_v=\nu_\pm$ if $v\in V_\pm$. 
In fact,  the infimum of $I_t$ is characterized by 
the fact that  for all $v\in V ^+$, the function $f_v(x)$ given by
$$2 \mu(v) \int \log|x-y| d\nu_v(y)- \sum_{e: s(e)=v}
\mu(t(e)) \int \log(1+\alpha x +\beta y)d\nu_{t(e)}(y)
- \frac{\mu(v) }{2} x^2 $$
is constant on the support of $\nu_v$ and non positive outside.
We have the same equation for $v\in E_-$ with $\alpha$ and
$\beta$ exchanged. The same characterization holds for 
$\nu_+$ which is  such that 
the function
$f_+$ given  by

$$-\delta
\int \log(1+\alpha x +\beta y)d\nu_{-}( y)
+2  \int \log|x-y| d\nu_{+}(y)
-  \frac{ x^2 }{2} $$
is constant on the support of $\nu_+$ and non positive outside
(and again a similar equation for $\nu_-$ with $\alpha$ and $\beta$ exchanged).
Putting  $\nu_v=\nu_\pm$ for $v\in V_\pm$, we find that
$f_v=\mu(v) f_{\pm}$
as  $\sum_{e:s(e)=v}
 \mu(t)=\delta\mu(v)$, and therefore
 is  indeed constant on the support of $\nu_v$ and non positive outside.
Thus $\nu_v=\nu_\pm$ for $v\in V_\pm$ is a solution, and by the previous argument
the unique solution.\qed

\subsubsection{Properties of the 
minimizers of the rate function}
We can  give  the following 
characterization of the  minimizer $(\nu_+,\nu_-)$ of $S$. 
\begin{lemma} \label{propmin} Let $\alpha=\sqrt{8t A}$ and $\beta=\sqrt{8t B}
$.  There exists $t_0>0$ so that
for $|t|\le t_0$,
\begin{itemize}
\item $\nu_\pm$ has a connected support $S_\pm$
included in $[-2-A(t),2+A(t)]$ with $A(t)$ going
to zero as $t$ goes to zero.
\item There exist functions $(f_\pm, g_\pm)$
which are analytic in a neighborhood
of $S_\pm$ and so that for $z\in \mathbb C$
\begin{equation}\label{devan}
G_\pm(z)=\int\frac{1}{z-x} d\nu_\pm(x)=f_\pm(z)-\sqrt{g_\pm(z)}
\end{equation}
with the branch of the square root chosen on $\mathbb R^-$.
Moreover $g_\pm$ is real on the real line and $S_\pm=\{ x: g_\pm(x)<0\}$.
We  define $G_\pm(z+i0)$ and $G_\pm(z-i0)$ as the limit of $G_\pm$ when $z$ goes to an element 
of $ S_\pm$ from above or from below.
\item For all $x\in S_\pm$ we have with $\alpha_+=\alpha,\alpha_-=\beta$,
\begin{equation}\label{SDl}
\delta\frac{\alpha_\pm}{\alpha_\mp} G_\mp\left(\frac{1-\alpha_\pm x}{\alpha_{\mp}}\right)
+x=G_\pm(x+i0) +G_\pm(x-i0).\end{equation}
\item There exists at most one  solution to \eqref{SDl} so that
$G_{\pm}$ are the Cauchy--Stieltjes transform of probability measures $\nu_\pm$
supported by $[-3,3]$.
\end{itemize}
\end{lemma}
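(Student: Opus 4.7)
\medskip

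\noindent\textbf{Proof proposal for Lemma \ref{propmin}.}

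The plan is to derive \eqref{SDl} as the Euler--Lagrange equation of the variational problem in Proposition \ref{propldp}, then to perform a perturbation analysis around $t=0$ (where the problem decouples and reduces to the Wigner semicircle law) to obtain the other conclusions. First I would use the characterization of the minimizer recalled in the proof of Proposition \ref{propldp}: on $\supp(\nu_\pm)$ the potential
\[
U_\pm(x):=\frac{x^2}{2}-2\int\log|x-y|\,d\nu_\pm(y)+\delta\int\log|1+\alpha_\pm x+\alpha_\mp y|\,d\nu_\mp(y)
\]
is constant, and $\geq$ that constant off the support. Differentiating $U_\pm$ inside $\supp(\nu_\pm)$ and using $2\,\mathrm{PV}\!\int\frac{d\nu_\pm(y)}{x-y}=G_\pm(x+i0)+G_\pm(x-i0)$, together with the identity $\int\frac{d\nu_\mp(y)}{1+\alpha_\pm x+\alpha_\mp y}=\frac{1}{\alpha_\mp}G_\mp\!\left(\frac{1-\alpha_\pm x}{\alpha_\mp}\right)$ (valid because for $t$ small the argument $(1-\alpha_\pm x)/\alpha_\mp$ is far from $\supp(\nu_\mp)$, so $G_\mp$ is a genuine Cauchy--Stieltjes integral there) yields \eqref{SDl}.

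Next I would establish the analytic structure \eqref{devan} by a standard one-cut argument. Write the equation as
\[
G_\pm(z+i0)+G_\pm(z-i0)=2F_\pm(z),\qquad F_\pm(z):=\tfrac12 z+\tfrac{\delta}{2}\tfrac{\alpha_\pm}{\alpha_\mp}G_\mp\!\left(\tfrac{1-\alpha_\pm z}{\alpha_\mp}\right),
\]
and observe that for small $t$ the map $z\mapsto(1-\alpha_\pm z)/\alpha_\mp$ sends a complex neighborhood of the (perturbed) support $S_\pm$ into a region where $G_\mp$ is analytic, so $F_\pm$ is analytic in a neighborhood of $S_\pm$. Then $H_\pm(z):=G_\pm(z)-F_\pm(z)$ is analytic off $S_\pm$ with $H_\pm(z+i0)+H_\pm(z-i0)=0$ on $S_\pm$; the standard argument (square to remove the jump discontinuity) gives $H_\pm(z)^2=g_\pm(z)$ with $g_\pm$ analytic across $S_\pm$, and the sign/branch of the square root is fixed by the normalization $G_\pm(z)\sim 1/z$ at infinity, producing \eqref{devan} with $f_\pm:=F_\pm$.

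The perturbation step establishes the first bullet. At $t=0$ the functional $S$ decouples and both minimizers are the semicircle law on $[-2,2]$. The coupling term $\delta\iint\log|1+\alpha x+\beta y|\,d\nu_+d\nu_-$ is smooth and small with respect to $(\alpha,\beta)$, so a standard implicit function / continuity argument (as in Lemma~2.6.2 of \cite{AGZ}, cited in \SectMark\ref{cupmodsec}) shows that the minimizer moves continuously in $t$ and its support remains connected and inside $[-2-A(t),2+A(t)]$ with $A(t)\to0$; equivalently, the effective one-variable potential $U_\pm$ obtained after substituting the current $\nu_\mp$ stays close to $x^2/2-2\int\log|x-y|d\nu_\pm(y)$, which has a strictly convex off-support excess, and this convexity survives the perturbation, ruling out extra cuts.

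Finally, uniqueness (fourth bullet) is proved along the lines of Lemma~\ref{uniq}. Given two Cauchy--Stieltjes solutions $G_\pm,\tilde G_\pm$ of \eqref{SDl} supported in $[-3,3]$, expand both in moments $m_n^\pm=\int x^n d\nu_\pm$, $\tilde m_n^\pm$, and set $\Delta(n):=\max_\pm |m_n^\pm-\tilde m_n^\pm|$. The equation \eqref{SDl}, expanded in $1/z$ at infinity and combined with the expansion of $G_\mp((1-\alpha_\pm x)/\alpha_\mp)$ in powers of $x$ (convergent since $(1-\alpha_\pm x)/\alpha_\mp$ is large), produces an inductive relation of the same form as \eqref{SDu}; for $t$ small the corresponding generating-function inequality forces $\Delta(n)\equiv0$, yielding the uniqueness claim. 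The main obstacle is the connected-support statement in the first bullet: controlling the global shape of the minimizer (rather than just the local perturbation of moments) requires a quantitative off-support inequality for $U_\pm$ uniform in small~$t$, which is the only place the analysis goes beyond a routine implicit function argument.
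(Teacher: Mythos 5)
Your proposal follows the same high-level strategy as the paper—derive the saddle-point equation from the variational characterization, exploit $t$-smallness to show $G_\mp$ at the shifted argument stays analytic, and then read off the one-cut structure—but there are three places where the route diverges.

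For \eqref{devan}, the paper does not use the generic ``subtract an analytic $F_\pm$, square off the jump'' argument. Instead it picks a specific simultaneous pair of test functions $h_+(x)=-\frac{\beta}{\alpha}\bigl(z+\frac{1+\alpha x}{\beta}\bigr)^{-1}$, $h_-(x)=(z-x)^{-1}$ in the saddle-point identity, which directly produces a \emph{quadratic equation} $G_-(z)^2+b(z)G_-(z)+a(z)=0$ in which $a,b$ are explicit (involving only $m(z)=G_+(-(1+\beta z)/\alpha)$, analytic near $S_-$ for small~$t$). This gives $G_-=\tfrac12\bigl(b-\sqrt{b^2-4a}\bigr)$ by solving the quadratic; the branch choice is then automatic. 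Your one-cut argument does work, but it secretly uses extra information—boundedness of $H_\pm$ at the edges of $S_\pm$ (needed to remove the singularities of $H_\pm^2$)—which the paper bypasses by having the square root handed to it by the quadratic formula. You would have to say a word about edge regularity.

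For uniqueness, you expand in moments and run the same induction as Lemma~\ref{uniq}. The paper instead works in the Stieltjes variable: it reads the equation as $G_\pm(z)=\tfrac12\bigl(z-\sqrt{z^2-4\varepsilon_\pm(z)}\bigr)$ with $\varepsilon_\pm(z)$ small, and shows directly a contraction $\sup_{|z|\geq M(t)}|G_\pm-\tilde G_\pm|\le\tfrac12\sup_{|z|\geq M(t)}|G_\mp-\tilde G_\mp|$, then concludes by analyticity. Both approaches are correct; the paper's is shorter because it avoids moment bookkeeping.

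Finally, for the first bullet you correctly identify the connected-support claim as the one step that does not reduce to a ``routine implicit function argument.'' The paper does not attempt a bare-hands argument there: it invokes Remark~\ref{rem1} together with Theorems~4.2 and 4.4 of \cite{guionnet-shlyakht:convexPotentials} (connectedness of the spectrum for strictly locally convex potentials), and offers as a quick alternative the observation that $f_+$ is strictly concave off the support while being constant on it, which forbids disconnection. Your outline gestures at exactly this concavity argument, so the proposal is not wrong—but be aware that the paper treats this bullet as a citation to prior work rather than as new analysis, and a self-contained version would need the uniform-in-$t$ off-support strict concavity to be proved, not merely asserted.
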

\begin{proof}
The fact that the support of $\nu_-$ and $\nu_+$ is
connected is a consequence of Remark \ref{rem1} and \cite[Theorem 4.4 and Theorem 4.2]{guionnet-shlyakht:convexPotentials} which asserts that the
limiting measures $\nu_\pm$ have connected supports 
 when  the potential 
is strictly locally convex, which is the case when $t$ is small
enough (note here that
the potential is not a polynomial, however
it expands in absolutely converging power series when $t$ is small
enough and $x,y$ are bounded by $K'$
so that we can apply the techniques  from \cite{guionnet-shlyakht:convexPotentials}
to represent the measures as the law of an element of
the $C^*$ algebra generated by the free Brownian motion).
Another way to see this is to notice that the function $f_+$
is strictly concave outside of the support
and continuous on the support where it takes only one constant value;
hence the support can not be disconnected.  The fact that
the support is bounded   is  a  direct consequence
of Brascamp-Lieb inequalities.

To deduce the second point, we obtain an equation on $(\nu_+,\nu_-)$
by writing $$S(\nu_+^\zeta, \nu_-^\zeta)\ge S(\nu_+,\nu_-)$$
with  $\nu^\zeta_{\pm}$ the law of $x+\zeta h_\pm(x)$  for  bounded continuous functions $h_\pm$ on $\mathbb R$. 
Writing 
 that the linear term in $\zeta$ must vanish results 
with the equation
\begin{equation}\label{sadpoint}
\sum_{\e=\pm}\left(\int  x h_\e(x) d\nu_\e(x)- \iint\frac{h_\e(x)-h_\e(y)}{x-y}d\nu_\e(x)d\nu_\e(y)
\right)
\end{equation}
$$\qquad =-\delta \int \frac{\alpha h_+(x)+\beta h_-(y)}{1+\alpha x+\beta y}
d\nu_+(x)d\nu_-(y).$$
Taking $h_+(x)=-\frac{\beta}{\alpha} (z+\frac{1+\alpha x}{\beta}
)^{-1}$ and $h_-(x)=(z-x)^{-1}$
we get that, with $m(z)=G_+(-(1+\beta z)/\alpha)$,

$$\frac{\beta^2}{\alpha^2}m(z)^2+G_-(z)^2+\delta \frac{\beta}{\alpha}
m(z) G_-(z) -zG_-(z)+1+\frac{\beta^2}{\alpha^2}(1-(1+\frac{\beta z}{\alpha})m(z))=0$$
which gives
$$G_-(z)=\frac{1}{2}( b(z)-\sqrt{b(z)^2-4a(z)})$$
with the cut of the square root on $\mathbb R^-$ and 
$$b(z)=z-\delta \frac{\beta}{\alpha}
m(z),\qquad a(z)=\frac{\beta^2}{\alpha^2}m(z)^2+
1+\frac{\beta^2}{\alpha^2}(1-(1+\frac{\beta z}{\alpha})m(z))
.$$
But, for small $t$,
when $z$ is in the neighborhood of $S_-$,
$-(1+\beta z)/\alpha$ is in the neighborhood of $-1/\alpha$ 
which is far from the support $S_+$.
Hence, $a(z)$ and $b(z)$ are analytic in the
neighborhood of $S_-$ and also take
real values. This completes the proof of the
second point.

For the third point, it is enough to take $h_+=0$ or $h_-=0$
in \eqref{sadpoint}
with the remark that
$$\int \int\frac{h(x)-h(y)}{x-y} d\nu_\pm(x)d\nu_\pm(y)
=\int h(x) [ G_\pm(x+i0)+G_\pm(x-i0)] d\nu_\pm(x)$$
and use the continuity of $G_\pm$ above and below the
cut due to the previous point to obtain the desired equations
almost surely and then everywhere.

For the last point, since \eqref{SDl} is equivalent to
\eqref{sadpoint}, we show the uniqueness
of the solution by  taking $h_+(x)=(z-x)^{-1}$ to deduce
that
$$-1+zG_\pm(z)-G_{\pm}(z)^2=\delta\alpha_\pm \int \frac{1}{z-x}
\alpha_\mp G_\mp(-\frac{1+\alpha_\pm x}{\alpha_\mp} ) d\nu_\pm(x)=:\e_\pm(z)$$
so that for $z$ sufficiently large
$$G_{\pm}(z)=\frac{1}{2}(z-\sqrt{z^2 -4\e_\pm(z)})$$
where we have taken the usual determination of
the square root because $\e_\pm(z)$ is small, since $\alpha_\mp G_\mp(-\frac{1+\alpha_\pm x}{\alpha_\mp} )$ is uniformly close to one for $x\in S_\pm$ and $t$
small.  Therefore, if we have
two solutions $G$ and $\tilde G$,
 we  find that there exists $M(t)$ finite
so that for  $t$ small,
$$\sup_{\substack{z\in\mathbb R\\ |z|\ge M(t)}}
|G_{\pm}(z)-\tilde G_{\pm}(z)|\le \frac{1}{2}\sup_{\substack{z\in\mathbb R\\ |z|\ge M(t)}}|G_{\mp}(z)-\tilde G_{\mp}(z)|$$
which results with $G_\mp(z)=\tilde G_\mp(z)$ for $z$ large enough
and real, and then for all $z$ in the complement of $S_\pm$ by analyticity.
\end{proof}

\subsubsection{Characterization of the 
minimizers of the rate function}

In this section we completely characterize the
measures $(\nu_+,\nu_-)$ by their Cauchy--Stieltjes transform.
To simplify the notations, we let $\tilde\nu_+$ and
$\tilde\nu_-$ be the
law of $-\alpha  x$ and $1+\beta x$
under $\nu_+$ and $\nu_-$ respectively.
By 
 Lemma \ref{propmin}, for $t$ small enough,
$\tilde\nu_+$ and $\tilde\nu_-$ have
disjoint connected supports $[a_1,a_2]$ and $[b_1,b_2]$ 
around the origin and the unity
respectively. Our study will be restricted 
to this case, which therefore include 
small $t$'s but eventually a larger
class of parameters. We will first
proceed by a reparametrization
of the  Cauchy--Stieltjes transforms of $\tilde\nu_+$ and $\tilde\nu_-$ ,
which will allow to obtain very simple equations,
and then solve these equations.

$\bullet$ {\bf Parameterization}.
\renewcommand\Re{\mathop{\rm Re}\nolimits}
\renewcommand\Im{\mathop{\rm Im}\nolimits}
Consider
the standard parameterization of
the elliptic curve $y^2=(z-a_1)(z-a_2)(z-b_1)(z-b_2)$:
\begin{equation}\label{param}
u(z)=\frac{i}{2}\sqrt{(b_1-a_1)(b_2-a_2)} 
\int_{b_2}^z \frac{dz'}{\sqrt{(z'-a_1)(z'-a_2)(z'-b_1)(z'-b_2)}}
\end{equation}
where the path of integration avoids the segment
$[a_1,b_2]$ (for $z\in[a_2,b_1]$, we choose to come from the upper half plane, though
this choice is irrelevant, see the comment below on $2K$-periodicity), 
and the square root in the denominator is defined as having cuts
$[a_1,a_2]$ and $[b_1,b_2]$ and such that at infinity it behaves like $z^2$.

The image of $z\in\mathbb{C}\cup\{\infty\}-([a_1,a_2]\cup[b_1,b_2])\mapsto u(z)=(Re\ u(z),\Im u(z))$ 
is a rectangle of the form $[-K,K[\times ]0,iK'[$, where $K$ and $K'$ are usually called
quarter-periods (though they will be half-periods in what follows). Now,
note that crossing the 
line $[a_2,b_1]$ corresponds to $u\to u+2K$. Since all the functions of $z$
we shall consider are smooth when one crosses this line, they can be made into periodic functions
on the strip $0<\Im u<K'$.
The map $z\mapsto u(z)$ is then an analytic isomorphism from
the Riemann sphere $\mathbb{C}\cup\{\infty\}$ minus the two segments $[a_1,a_2]$ and $[b_1,b_2]$
to $\mathbb{R}/2K\mathbb{Z}\times ]0,iK'[$. 
If one extends the map to the cuts $[a_1,a_2]$ and $[b_1,b_2]$, then the result
depends on whether one approaches them from the upper or lower half-plane, and they get sent to
$\Im u=K'$ et $\Im u=0$ as described on the figure.
More precisely, if $a\in]a_1,a_2[$, then $u(a+i0)+u(a-i0)=2iK'$ (the two images of $a$ are symmetric
w.r.t.\ $iK'$ on the line $\Im u=K'$), and similarly if $b\in ]b_1,b_2[$, then $u(b+i0)+u(b-i0)=0$.

\begin{center}
\begin{tikzpicture}[scale=0.9,x=0.9cm,y=0.9cm]
\draw[dotted] (0,-2) rectangle (6,2);
\node[circle,draw] at (5.2,1.2) {$z$};
\coordinate[circle,fill,inner sep=1pt,label=above:{$a_1$}] (a1) at (1,0);
\coordinate[circle,fill,inner sep=1pt,label=above:{$a_2$}] (a2) at (2,0);
\coordinate[circle,fill,inner sep=1pt,label=above:{$b_1$}] (b1) at (4,0);
\coordinate[circle,fill,inner sep=1pt,label=above:{$b_2$}] (b2) at (5,0);
\draw (a1) -- (a2);
\draw (b1) -- (b2);
\draw[decorate,decoration=zigzag] (a2) to (b1);
\draw[red,decoration={markings,mark = at position 0.5 with { \arrow{>} }},postaction={decorate},bend right] (a1) to (a2);
\draw[red,decoration={markings,mark = at position 0.5 with { \arrow{>>} }},postaction={decorate},bend right] (a2) to (a1);
\draw[green,decoration={markings,mark = at position 0.5 with { \arrow{>} }},postaction={decorate},bend right] (b1) to (b2);
\draw[green,decoration={markings,mark = at position 0.5 with { \arrow{>>} }},postaction={decorate},bend right] (b2) to (b1);
\draw[->] (7,0) -- (8,0);
%
\node[circle,draw] at (14.2,1.2) {$u$};
\coordinate[circle,fill,inner sep=1pt,label=left:{$-K$}] (u1-b1) at (9,-2);
\coordinate[circle,fill,inner sep=1pt,label=right:{$+K$}] (u2-b1) at (15,-2);
\coordinate[circle,fill,inner sep=1pt,label=left:{$iK'-K$}] (u1-a2) at (9,2);
\coordinate[circle,fill,inner sep=1pt,label=right:{$iK'+K$}] (u2-a2) at (15,2);
\coordinate[circle,fill,inner sep=1pt,label=above:{$iK'$}] (u-a1) at (12,2);
\coordinate[circle,fill,inner sep=1pt,label=below:{$0$}]  (u-b2) at (12,-2);
\coordinate[circle,fill,inner sep=1pt,label=below:{$u_\infty$}]  (u-inf) at (12,-0.5);
\draw[decorate,decoration=zigzag] (u1-b1) to (u1-a2);
\draw[decorate,decoration=zigzag] (u2-b1) to (u2-a2);
\draw[red,decoration={markings,mark = at position 0.5 with { \arrow{>} }},postaction={decorate}] (u-a1) to (u2-a2);
\draw[red,decoration={markings,mark = at position 0.5 with { \arrow{>>} }},postaction={decorate}] (u1-a2) to (u-a1);
\draw[green,decoration={markings,mark = at position 0.5 with { \arrow{>>} }},postaction={decorate}] (u-b2) to (u1-b1);
\draw[green,decoration={markings,mark = at position 0.5 with { \arrow{>} }},postaction={decorate}] (u2-b1) to (u-b2);
\end{tikzpicture}
\end{center}

The inverse map $z(u)$ can be expressed
in terms of Jacobi's elliptic function $\sn$, and can be deduced from the following identity:
\[
\sn^2(u,k^2)=\frac{a_1-b_1}{b_2-b_1} \frac{b_2-z}{a_1-z}
\]
with $k^2=\frac{(b_2-b_1)(a_2-a_1)}{(b_2-a_2)(b_1-a_1)}$.
Note finally that
$$\lim_{z\ra\infty} u(z)=\frac{i}{2}
\int_{b_2}^{+\infty} \frac{\sqrt{(b_1-a_1)(b_2-a_2)}  dx}{\sqrt{(x-a_1)(x-a_2)(x-b_1)(x-b_2)}}=:
u_\infty\in i\mathbb R$$
is a pole of $z(u)$. It is a simple pole as one easily sees that for $z$
large $u(z)\sim u_\infty +\frac{z_{-1}}{z}+o(1/z)$ with
a non vanishing constant $z_{-1}$. Moreover,
 $z(u)$ is analytic everywhere else in $[-K;K]\times[0,iK']$
by the implicit function theorem, and,
once analytically continued to the whole complex plane, is even and
{\em elliptic}\/ (doubly periodic), with poles at 
$\pm u_\infty\pmod {2K, 2iK'}$.

$\bullet$ {\bf Resolvents}.
We let $\omega_\pm$ be the reparametrization of
the Cauchy--Stieltjes transform of $\tilde\nu_+$ and
$\tilde \nu_-$ respectively;
$$
\omega_\pm(u)=\int \frac{1}{z(u)-x} d\tilde\nu_\pm(x)
$$
The functions $\omega_\pm(u)$
are analytic in the strip $0<\Im u<iK'$, and according
to the second point \eqref{devan} of Lemma \ref{propmin}, 
they can in fact be analytically continued to some neighborhood of the closed strip $0\le\Im u\le iK'$.
Indeed, if $0<|\Re u(z)|<K$ the mapping $z\mapsto u(z)$
is invertible around $z$ and the extension
in the $z$ variable translates directly into the $u$ variable.
If $z=a_1,a_2$ (resp.\ $b_1,b_2$ for $G_-$), 
one has according to \eqref{devan} (with
a more detailed study of $g_\pm$ which shows that
for $t$ small enough $g_\pm'$ does not vanish in a neighborhood of $S_\pm$),
 $G_\pm(z)\propto  
(z-z')^{1/2}$ as $z'\to z$; but this matches the behavior of $u(z)$
in \eqref{param},
so once again $\omega_\pm(u)$ is a well-defined analytic function 
in the neighborhood of $0,iK',\pm K,iK'\pm K$.

By \eqref{SDl},  we have 
\begin{align}
\omega_+(u)+\omega_+(2iK'-u)-\delta\omega_-(u)&=P_+(u)&&\Im u=K' \label{firsteq}\\
\omega_-(u)+\omega_-(-u)- \delta\omega_+(u)&=P_-(u)&&\Im u=0
\end{align}
where $P_+(u)=z(u)/\alpha$ and  $P_-(u)=(z(u)-1)/\beta$. $P_\pm$ 
are even elliptic functions (with periods $2K$, $2iK'$).

We also have the following additional conditions:
\begin{align}
\omega_+(u)&=\omega_+(-u)&&\Im u=0\\
\omega_-(u)&=\omega_-(2iK'-u)&&\Im u=K'
\end{align}
expressing the fact that the Cauchy--Stieltjes transform of
$\tilde\nu_+$ is analytic in a neighborhood
of  $[b_1,b_2]$,
so its values at $z\pm i0$, $z\in [b_1,b_2]$ should be equal; and similarly for $\tilde\nu_-$. 

Now these equations can be repeatedly used to extend $\omega_\pm$
to the whole complex plane: for example $u\to 2iK'-u$ maps the strip $0\le \Im u\le K'$ to
the strip $K'\le \Im u\le 2K'$, so we can use Eq.~\eqref{firsteq} as a definition of $\omega_+$ in this new strip
and equation \eqref{firsteq} precisely ensure 
that the two definitions coincide at their common
boundary $\Im u=K'$; and so on. This way we obtain meromorphic functions $\omega_\pm(u)$
defined on the whole complex plane,
and by uniqueness of analytic functions that coincide on a set
with accumulation points, we deduce that the above equations are true
for all $u$:
\begin{eqnarray}
\omega_+(u+2K)&=&\omega_+(u)\label{firstpty}\\
\omega_-(u+2K)&=&\omega_-(u)\label{per2}\\
\omega_+(u)&=&\omega_+(-u)\label{pio1}\\
\omega_-(u)&=&\omega_-(2iK'-u)\\
\omega_+(u)+\omega_+(2iK'-u)-\delta\omega_-(u)&=&P_+(u)\label{cot1}\\
\omega_-(u)+\omega_-(-u)-\delta \omega_+(u)&=&P_-(u)
\label{lastpty}
\end{eqnarray}
Furthermore, these functions must possess a zero at 
$u_\infty:=u(z=\infty)$ and a prescribed
derivative at $u_\infty$.

$\bullet$ {\bf General solution of the saddle point equations}.
Now we have a well-posed analytic problem, which can be solved explicitly.
Set $\delta=q+q^{-1}$. If $|\delta|<2$, $q$ is not real and has modulus one,
e.g.~if $\delta=2\cos(\pi/n)$, $q=e^{i\pi /n}$.
If $\delta>2$, $q$ is real and can be chosen in $]0,1[$. 
Set
\[
\phi_\pm(u) = q^{\pm 1}\omega_+(u) - \omega_-(u)-R_\pm(u)
\]
where $R_\pm(u)=\frac{1}{1-q^{\pm 2}}(q^{\pm 1} P_+(u)+q^{\pm 2}P_-(u))$.
Then the equations can be recombined into:
\begin{align}
\phi_\pm(u+2K)&=\phi_\pm(u)\label{periodic}\\
\phi_\pm(u+2iK')&=q^{\pm 2} \phi_\pm(u)\label{twist}
\end{align}
where the first point is a direct consequence of \eqref{firstpty} and \eqref{per2},
whereas the second is obtained by multiplying \eqref{cot1}
by $q^\pm$ and \eqref{lastpty}  by $q^{\pm 2}$ and adding the two corresponding
equations. Moreover,  \eqref{pio1} and \eqref{lastpty} 
implies that 
 \begin{equation}\label{sym}
\phi_\pm(-u)=-\phi_\mp(u).\end{equation}
Thus we may consider $\phi_+$ only.
Furthermore we know that the only poles of $\phi_\pm$ in the fundamental domain
$[-K;K]\times [-iK';iK']$ are at $\pm u_\infty$ i.e.\ $z\to\infty$;
they appear because of the inhomogeneous
terms $R_\pm$, which have such poles. We can therefore express
$\phi_+(u)$ in terms
of $\theta$ functions.
Define $\Theta$ to be the rescaled $\theta_1$ function, or explicitly
\begin{equation}\label{theta}
\Theta(u)=2\sum_{n=0}^\infty e^{-(n+1/2)^2\pi K'/K} \sin (2n+1)\frac{\pi u}{2K}
\end{equation}
which satisfies
\begin{align*}
\Theta(u+2K)&=-\Theta(u)\\
\Theta(u+2iK')&=-e^{\pi (K'-i u)/K} \Theta(u)
\end{align*}
and with a unique simple zero at $u=0\pmod{2K,2iK'}$.

Then we have 
\begin{proposition}\label{ratiotheta}
Write $q=e^{i\pi\nu}$ with $\nu$ real if $\delta<2$ and
purely
 imaginary if $\delta>2$. Then,
\begin{equation}\label{generalsol}
\phi_+(u)=c_+ \frac{\Theta(u-u_\infty-2\nu K)}{\Theta(u-u_\infty)}
+c_- \frac{\Theta(u+u_\infty-2\nu K)}{\Theta(u+u_\infty)}
\end{equation}
where
\[
c_\pm =\mp z_{-1}\frac{\Theta'(0)}{\Theta(2\nu K)}
\frac{1}{q-1/q}(\alpha^{-1}+q^{\pm 1}\beta^{-1})
\]
if $z(u)=z_{-1}/(u-u_\infty)+O(1)$ as $u$ goes to $u_\infty$. 
\end{proposition}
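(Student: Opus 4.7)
The plan is to identify $\phi_+$ as the unique element of an explicit two-dimensional space of meromorphic functions on $\mathbb{C}$ with prescribed quasi-periodicity and pole data, and then to extract $c_\pm$ by residue matching. First I would pin down the singularities of $\phi_+$ in a period parallelogram: the $\omega_\pm$ summands are regular at $\pm u_\infty$ (the Cauchy--Stieltjes transforms $G_{\tilde\nu_\pm}(z)$ vanish at $z=\infty$, and $z(u)\to\infty$ precisely at $\pm u_\infty$), so all singular contributions to $\phi_+$ come from $R_+$, inheriting the simple poles of $z$. Combined with the transformation laws \eqref{periodic}--\eqref{twist} already established, $\phi_+$ is a meromorphic section of a line bundle on the torus $\mathbb{C}/(2K\mathbb{Z}+2iK'\mathbb{Z})$ with multiplier system $(1,q^2)$, and the space of such sections with at most simple poles at $\pm u_\infty$ is two-dimensional by a standard Riemann--Roch count.

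Next I would exhibit two linearly independent sections built from $\Theta$. The ratio $F_+(u):=\Theta(u-u_\infty-2\nu K)/\Theta(u-u_\infty)$ is $2K$-periodic (the sign from the $\Theta$-antiperiodicity cancels in the ratio) and, using $\Theta(w+2iK')=-e^{\pi(K'-iw)/K}\Theta(w)$, transforms under $u\mapsto u+2iK'$ by exactly $e^{2i\pi\nu}=q^2$: the shift $-2\nu K$ in the numerator is tuned precisely to this end, and its only pole modulo the lattice is simple, at $u_\infty$. Similarly $F_-(u):=\Theta(u+u_\infty-2\nu K)/\Theta(u+u_\infty)$ has the same transformation law and a simple pole at $-u_\infty$. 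Hence $\phi_+=c_+F_++c_-F_-$. The coefficient $c_+$ is then determined by matching the residue at $u_\infty$: from $\mathrm{Res}_{u_\infty}\phi_+=-\mathrm{Res}_{u_\infty}R_+=\frac{z_{-1}}{q-q^{-1}}(\alpha^{-1}+q\beta^{-1})$, and from the ansatz $\mathrm{Res}_{u_\infty}(c_+F_+)=-c_+\Theta(2\nu K)/\Theta'(0)$ (using $\Theta(-x)=-\Theta(x)$), one reads off the stated value of $c_+$.

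For $c_-$ the cleanest route is via the symmetry $\phi_+(-u)=-\phi_-(u)$ of \eqref{sym}. Applying the analogous ansatz to $\phi_-$, obtained by the formal interchange $q\leftrightarrow q^{-1}$ (equivalently $\nu\leftrightarrow -\nu$), its leading coefficient is determined by the same residue argument at $u_\infty$; substituting into $\phi_+(-u)=-\phi_-(u)$, using $\Theta(-x)=-\Theta(x)$, and matching the coefficient of $F_-$ yields the stated $c_-$, with the $q^{-1}$ in $\alpha^{-1}+q^{-1}\beta^{-1}$ arising naturally from the $q\leftrightarrow q^{-1}$ interchange. The main obstacle is the correct treatment of the pole at $-u_\infty$: a naive direct residue of $R_+$ there (using $\mathrm{Res}_{-u_\infty}z=-z_{-1}$ from the evenness of $z$) would produce $\alpha^{-1}+q\beta^{-1}$ in place of the correct $\alpha^{-1}+q^{-1}\beta^{-1}$, reflecting the fact that after analytic continuation past $\Im u=K'$ the individual $\omega_\pm$ cease to be ordinary Cauchy--Stieltjes transforms and acquire compensating singularities that the naive calculation overlooks; routing the determination of $c_-$ through the symmetry sidesteps this subtlety entirely.
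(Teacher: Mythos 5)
Your argument follows the same outline as the paper's: identify the $\Theta$-quotients $F_\pm$ as the right basis for the space of quasi-periodic meromorphic functions with the transformation law \eqref{periodic}--\eqref{twist} and simple poles at $\pm u_\infty$, then fix $c_+$ by residue matching at $u_\infty$ and obtain $c_-$ from the symmetry $\phi_+(-u)=-\phi_-(u)$; the paper does exactly this, including the route through the symmetry for $c_-$. Your observation that a naive residue computation of $R_+$ at $-u_\infty$ would give the wrong factor $\alpha^{-1}+q\beta^{-1}$ is right and is the correct reason to go through $\phi_-$ instead. On the uniqueness step you invoke Riemann--Roch where the paper, which explicitly flags Riemann--Roch as an option, instead gives an elementary argument: setting $f=\phi_+-\tilde\phi_+$ (holomorphic since the residues cancel), observing $g=f'/f$ is an elliptic function with nonnegative integer residues summing to zero, hence $g$ constant, hence $f=e^{\gamma u}$, and the transformation laws then force the excluded degenerate case. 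Either route works.

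The genuine gap is that you have not addressed the non-degeneracy required to make the construction meaningful: one must show $\Theta(2\nu K)\neq 0$, equivalently that $q^2\neq e^{-2\pi n K'/K}$ for any integer $n$. Without this, the formulas for $c_\pm$ are undefined, and worse, $F_\pm$ cease to have poles at $\pm u_\infty$ and collapse to constants, so they no longer span the two-dimensional space your Riemann--Roch count produces (the count itself is unaffected — deg $2$ line bundles on an elliptic curve always have $h^0=2$ — but the proposed basis degenerates). The paper rules this out first: if $q^2=e^{-2\pi n K'/K}$ then $e^{-n i\pi u/K}\phi_+(u)$ is elliptic, so its residues sum to zero, but the residues at $\pm u_\infty$ compute to $e^{-ni\pi u_\infty/K}(\alpha^{-1}+q\beta^{-1})+e^{ni\pi u_\infty/K}(\alpha^{-1}+q^{-1}\beta^{-1})$, which cannot vanish; contradiction. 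You should supply this (or an equivalent) exclusion before invoking the $F_\pm$ basis.
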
cannot
\begin{proof} This can be viewed as a consequence of the Riemann--Roch
theorem, but we give here an elementary proof.

If $\delta>2$, let us first rule out the possibility
that $q^2=e^{-2\pi n K'/K}$ for some integer number $n$. Indeed then 
$e^{-ni\pi u/K} \phi_+(u)$ is also elliptic and therefore the sum
of its residues vanishes. But the latter is given by
$e^{-ni\pi u_\infty/K}(\alpha^{-1}+q\beta^{-1}) +e^{ni\pi u_\infty/K}(\alpha^{-1}+q^{-1}\beta^{-1})$ which cannot vanish. Thus, $q^2\neq e^{-2\pi n K'/K}$
and therefore $\Theta(2\nu K)\neq 0$ (which allows in particular to
define $c_\pm$ above). 

Next, let $\tilde \phi_+$ denote the right hand side of
\eqref{generalsol}  and observe that by the properties of the
functions $\Theta$, $\tilde\phi_+$ satisfies \eqref{twist} and
\eqref{periodic}. 

The formula for $c_\pm$ is obtained by requiring that
$\tilde \phi_+$ have the same
residues at $u\sim \pm u_\infty$ as our solution $\phi_+$. 
Indeed, $\Theta(u)\sim \Theta'(0) u$ as $u$ goes
to zero so that \eqref{generalsol} shows that
$$\tilde\phi_+(u)\sim_{u\ra u_\infty} c_+\frac{\Theta(2\nu K)}{ \Theta'(0) (u-u_\infty)} +O(1)$$
whereas both $\omega_+$ and $\omega_-$ go to zero and
$$R_+(u)\sim \frac{1}{1-q^2}(\alpha^{-1}q+\beta^{-1}q^2) z(u) \approx \frac{1}{z_{-1}(1-q^2)(u-u_\infty)}.$$
The formula $\phi_+=q\omega_+-\omega_--R_+$ allows to conclude. The
same reasoning works as $u\to -u_\infty$ by using $\phi_+(-u)=-\phi_-(u)$.

Let us  finally show
that  $f:=\phi_+-\tilde\phi_+$
must vanish. Indeed  $f$ is holomorphic
and therefore $g:=f'/f$ is holomorphic
except where $f$ vanishes, where
it has  only simple poles, with non-negative residues.
But since $f$ satisfies \eqref{twist} and \eqref{periodic}, $g$ is elliptic
and therefore the sum of its residues vanishes. Hence, the residues
of $g$ vanish, and therefore by Liouville's Theorem,  $g$ is constant, resulting with $f(u)=e^{\gamma u}$ for some constant $\gamma$. But then \eqref{twist} implies that
$\gamma=i\pi n/K$ and $q^2=e^{-2\pi n K'/K}$, which we excluded earlier.

\end{proof}

Finally, we  need to fix the parameters $a_1$, $a_2$, $b_1$, $b_2$.

The first way is to notice that $G_\pm(z)$ is an analytic function
in $\alpha,\beta,z$ in a neighborhood of the origin as, by Remark \ref{rem1},
it is the Stieltjes function of the limiting spectral measure of a matrix
model with strictly log-concave density which, even though
not polynomial, expands as a power series, see \cite{guionnet-shlyakht:convexPotentials}. The coefficients of these series can be computed recursively 
by the Schwinger--Dyson equation. Finally, by \eqref{devan}, the boundary of
the support $[a_1,a_2]$ are determined
by $g_+(a_i)=0, i=1,2$  which shows that there is a
polynomial $P$ so that $P(a_i,G_+(a_i),\alpha,\beta)=0$.
The implicit function theorem then implies that $a_i$ is
an analytic function of $\alpha,\beta$ for $i=1,2$ 
whose expansion can be deduced from the expansion
of $G_+$. The same applies for $b_1,b_2$. 

The second way to determine these boundary points uses
the explicit formula in terms of $\theta$ functions
and the reparametrization of the problem in terms 
of  $p:=\exp(-\pi K'/K)$ and of 
\[
\kappa:=p e^{-2 i \pi u_\infty/K}
\]
Note that because $(a_1,a_2,b_1,b_2)$ expand analytically in $\alpha,\beta$,
so do $(K,K',u_\infty)$ with
\begin{gather*}u_\infty=\sum_{n+m\ge -1}u_{n,m} \beta^n\alpha^m,\\
K=u(b_1)=\sum_{n+m\ge -1}K_{n,m} \beta^n\alpha^m,\textrm{ and }K'=-iu(a_1)=
\sum_{n+m\ge 0}K'_{n,m} \beta^n\alpha^m.\end{gather*}
As a consequence, $(p,\kappa)$ also expand in terms 
of $(\alpha,\beta)$, with $\kappa\sim \sqrt{\alpha/\beta}$ and $p\sim \sqrt{\alpha\beta}$ when $\alpha,\beta$ are small but $\alpha/\beta$ of
order one.  Again by the implicit function theorem, we
can invert this expansion and obtain $\alpha,\beta$ as 
a power series in $(p,\kappa)$, and therefore also $(a_1,a_2,b_1,b_2)$,
 $z_{-1}$ and $z(u)$.  We can then identify the expansion of 
$(q-q^{-1})\omega_+$ and $\phi_++R_+-\phi_-R_-$ around $u_\infty$
to compute $\omega_+$ recursively. In appendix A, we provide the first few orders
of the power series expansion of some quantities as a function of
$p$ and $\kappa$, and their diagrammatic meaning.

\begin{proposition}\label{prop:alg}
If $\delta=2\cos(\pi/n)$, $n\ge 3$, $G_\pm$ satisfy an algebraic equation.
\end{proposition}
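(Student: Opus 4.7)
The plan is to exploit the fact that when $\delta=2\cos(\pi/n)$, the monodromy exponent $q=e^{i\pi/n}$ satisfies $q^{2n}=1$, so that the quasi-periodicity \eqref{twist} becomes an honest periodicity after raising to the $n$-th power.

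First, I would observe that from the explicit formula \eqref{generalsol} together with \eqref{periodic}--\eqref{twist}, the function $\phi_+$ is meromorphic on $\mathbb C$ and satisfies
\[
\phi_+(u+2K)=\phi_+(u),\qquad \phi_+(u+2iK')=q^{2}\phi_+(u).
\]
With $q=e^{i\pi/n}$ we have $q^{2n}=1$, so the function $\Phi(u):=\phi_+(u)^{n}$ is doubly periodic with lattice $\Lambda:=2K\mathbb Z+2iK'\mathbb Z$. Hence $\Phi$ is an elliptic function on the torus $\mathbb C/\Lambda$. By the classical theorem that the field of elliptic functions for $\Lambda$ is generated by $\wp_\Lambda$ and $\wp_\Lambda'$, which are themselves connected by the Weierstrass relation $(\wp')^2=4\wp^3-g_2\wp-g_3$, the function $\Phi$ is a rational function of $\wp_\Lambda(u)$ and $\wp_\Lambda'(u)$; in particular $\Phi(u)$ satisfies an algebraic polynomial equation $F(\Phi(u),\wp_\Lambda(u))=0$.

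Second, I would note that the parametrization $z(u)$ introduced in \eqref{param} is itself an elliptic function for the same lattice $\Lambda$ (it has simple poles at $\pm u_\infty\pmod\Lambda$), hence is a rational function of $\wp_\Lambda(u)$ and $\wp_\Lambda'(u)$, and in particular algebraic over $\mathbb C(\wp_\Lambda)$. The same is true for $P_+(u)=z(u)/\alpha$ and $P_-(u)=(z(u)-1)/\beta$. Using the definitions $\phi_\pm(u)=q^{\pm1}\omega_+(u)-\omega_-(u)-R_\pm(u)$ and the symmetry \eqref{sym}, one recovers
\[
(q-q^{-1})\,\omega_+(u)=\phi_+(u)+\phi_-(-u)+R_+(u)-R_-(-u),
\]
which expresses $\omega_+$ as an algebraic combination of $\phi_+(u)$, $\phi_+(-u)$ (via \eqref{sym}), and rational functions of $z(u)$. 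Consequently $\omega_+$ lies in a finite algebraic extension of $\mathbb C(z(u))$; equivalently, $\omega_+(u)$ and $z(u)$ satisfy a bivariate polynomial relation $P(\omega_+(u),z(u))=0$.

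Third, since $G_+(z)=\omega_+(u(z))$ for $z$ outside the cuts, the identity $P(G_+(z),z)=0$ persists, giving the desired algebraic equation for $G_+$; the same argument, with the roles of $+$ and $-$ interchanged (using that $\phi_-$ also has twist $q^{-2}$ which still satisfies $(q^{-2})^n=1$), yields one for $G_-$.

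The main obstacle, and where genuine work is hidden, is ensuring that the algebraic equation produced is nontrivial and of controlled degree: one must track how the poles of $\phi_\pm^n$, $z(u)$, and $R_\pm$ line up on the torus to guarantee that the elimination of $u$ between the two elliptic functions does not collapse, and to estimate the resulting degree. For $\nu=1/n$, the shift $2\nu K$ in \eqref{generalsol} is a torsion point of order $n$ on the torus, and this is precisely the reason the $n$-th power becomes elliptic; a cleaner way to package the argument is to remark that $\phi_+$ lives in a degree-$n$ algebraic extension of the field of elliptic functions on $\Lambda$ (generated by an $n$-th root of an elliptic function with prescribed divisor supported at $\pm u_\infty$), which is itself an algebraic curve, and $z$, $\omega_+$ are regular functions on this curve.
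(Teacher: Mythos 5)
Your route is conceptually the same as the paper's: both exploit $q^{2n}=1$ to promote the quasi-periodicity \eqref{twist} into an honest periodicity, and then invoke the classical algebraic dependence of functions on an elliptic curve. The paper observes directly that $\phi_\pm$ (hence $\omega_\pm$) become elliptic on the enlarged lattice $(2K,\, 2niK')$ and cites the Whittaker--Watson theorem that two elliptic functions with common periods satisfy a polynomial relation; you take the $n$-th power $\phi_+^n$ to land back on the original lattice $(2K,\, 2iK')$ and then pass through a degree-$n$ field extension of $\mathbb C(\wp_\Lambda,\wp'_\Lambda)$. These are two packagings of the same observation, and each yields the degree bounds the paper records.

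There is, however, an algebraic error in the step extracting $\omega_+$ from $\phi_\pm$. You write
$(q-q^{-1})\omega_+(u)=\phi_+(u)+\phi_-(-u)+R_+(u)-R_-(-u)$,
but \eqref{sym} gives $\phi_-(-u)=-\phi_+(u)$, so the two $\phi$-terms cancel identically and the right-hand side collapses to the rational function $R_+(u)-R_-(-u)$ of $z(u)$ alone — that would make $\omega_+$ already rational in $z$, which is false and also severs the connection to $\phi_\pm$ on which the rest of your algebraicity argument rests. Subtracting the two defining relations $\phi_\pm = q^{\pm 1}\omega_+ - \omega_- - R_\pm$ gives the correct identity
$(q-q^{-1})\omega_+(u)=\phi_+(u)-\phi_-(u)+R_+(u)-R_-(u)$,
equivalently $\phi_+(u)+\phi_+(-u)+R_+(u)-R_-(u)$ by \eqref{sym}. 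With this fixed, $\omega_+$ does lie in the $n$-th root extension you describe (note $\phi_-^n$ is elliptic for the same reason $\phi_+^n$ is, since $(q^{-2})^n=1$), and the remainder of your argument goes through.
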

\begin{proof} Observe that since $q= e^{i\pi/n}$,
by equations (\ref{periodic},\ref{twist}), the functions 
$\phi_\pm$ are elliptic with periods $(2K, 2n iK')$, and therefore so are
$\omega_\pm$.
The function $z(u)$ is also elliptic with these periods.
But a fundamental theorem of elliptic functions \cite[section 20.54]{WW96} 
states that two elliptic functions with the same periods are
related by an algebraic equation:  there exist two
polynomials $P_\pm$ so that
$$P_\pm(\omega_\pm(u), z(u))=0\,.$$
Composing with $u(z)$ shows the existence of
an algebraic relation. 

We can in fact determine the degree of $P_\pm$ by a slightly more explicit
construction of these polynomials;
we find it is at most $2n-2$ in $z$ and $2n$ in $\omega$.
\end{proof}

This result is to be compared with Theorem 15 of \cite{BBM09}. Indeed, their
generating series $M(q,\nu,t,w,z;x,y)$ is closely related to our 
$G_\pm(z;\alpha,\beta;\delta)$. The correspondence of
parameters goes as follows:
$q=\delta^2$, $\nu=1+\delta\, \alpha/\beta$; among the three
parameters $t,w,z$, one is redundant and if $z$ is set to $1$
one has $t=\beta$, but $w$ is fixed to be $w=1/\delta$ ($w$ and $z$ are
parameters weighing in our language white and black regions between tangles;
note that introducing an extra parameter in our matrix model to let
$w$ vary is possible and would make no difference in the exact solution
presented so far, so that Prop.~\ref{prop:alg} would still hold).
$x,y$ are ``boundary'' parameters similar to our parameter $z$, in the sense
that they give a weight to a particular edge (or vertex, or face) of the
planar map. However they are not exactly the same, and therefore
direct identification of our generating series is not possible; only
specific identities can be written, such as
\[
\frac{1}{\alpha}\int x\, d\tilde \nu_+(x)=M(\delta^2,1+\delta\,\alpha/\beta,
\beta,1/\delta,1;1,1)
\]
where the left hand side is the $1/z^2$ term in the $z\to\infty$ expansion
of $G_+(z)$, and the right hand side is the generating series of
\cite{BBM09} with certain specializations of its parameters.

\appendix
\section{Analyticity of \texorpdfstring{$\mathscr{T}$}{tr} as a function of fugacity}\label{app:analytic}
 Let $S_1,\ldots,S_k$ and $S$ be 
elements of the Temperley--Lieb planar algebra  with fugacity $\delta$ and
set   for  complex parameters $t_1,\ldots,t_k$,
\begin{equation}\label{trbeta2}
\mathscr{T}_{t,\delta} (S)=\sum_{n_1,\ldots,n_k=0}^\infty
\prod_{i=1}^{n_k}\frac{t_i^{n_i}}{n_i!} \sum_{P\in
P(n_1,\ldots,n_k,S)}
\delta^{\#\textrm{ loops in } P}
\end{equation}
where we sum over all admissible planar maps 
built on $S_1,\ldots,S_k,S$. Then we state that if $b_i$ is the number of boundary points of $S_i$, then
\begin{lemma}\label{lemana} There exists a positive constant $B$ so that
for all $t_1,\ldots,t_k,\delta\in\mathbb C^{k+1}$ so
that $\max_{1\le i\le k} |\delta|^{\frac{b_i}{2}} | t_i|<B$, $\mathscr{T}_{t,\delta} (S)$
is a well defined absolutely converging series,
and therefore
$t,\delta\ra \mathscr{T}_{t,\delta} (S)$ is analytic on this set.
\end{lemma}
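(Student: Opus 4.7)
The plan is to show absolute convergence of the series defining $\mathscr{T}_{t,\delta}(S)$ by a crude term-by-term estimate, separating the combinatorial count of configurations from the weight $\delta^{\#\text{loops}}$. Let $r = n_1+\cdots+n_k$ and $N = b_S + \sum_i n_i b_i$ denote the total number of boundary points. Every configuration $P \in P(n_1,\ldots,n_k,S)$ is a planar tangle whose strings provide a non-crossing pairing of these $N$ boundary points, and in particular contains at most $N/2$ closed loops once the inner disk boundaries are removed.

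First I would establish the combinatorial estimate
\[
|P(n_1,\ldots,n_k,S)| \leq A^N \cdot r!
\]
for some constant $A$ depending only on the shapes $S_1,\ldots,S_k,S$. The factor $r!$ accounts for the labels on the inner boxes, while the $A^N$ factor bounds the number of planar arrangements of $r$ boxes together with their non-crossing string matchings: after fixing a linear order on the boxes (giving the $r!$), what remains is a non-crossing pairing of $N$ points on a disk, of which there are at most the Catalan number $C_{N/2}\le 4^{N/2}$. This is the main technical step; it can also be deduced from the matrix-model / Wick calculus representation, where these are exactly the combinatorial coefficients appearing in a convergent Gaussian expansion.

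Next, since the number of loops in any configuration is at most $N/2$, we have $|\delta|^{\#\text{loops}} \leq (1+|\delta|)^{N/2}$, and this factor splits multiplicatively as $(1+|\delta|)^{b_S/2} \prod_i (1+|\delta|)^{n_i b_i/2}$. Using the multinomial bound $r!/\prod_i n_i! \le k^r$, I would combine these estimates to get
\[
|\mathscr{T}_{t,\delta}(S)| \;\leq\; A^{b_S}(1+|\delta|)^{b_S/2} \prod_{i=1}^k \sum_{n_i \geq 0} \bigl(\, k\, A^{b_i}(1+|\delta|)^{b_i/2}\,|t_i|\,\bigr)^{n_i},
\]
a product of geometric series. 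This converges provided $(1+|\delta|)^{b_i/2}|t_i| < (kA^{b_i})^{-1}$ for every $i$, which (after absorbing the elementary comparison between $(1+|\delta|)^{b_i/2}$ and $|\delta|^{b_i/2}$ into the constant) gives the stated condition $|\delta|^{b_i/2}|t_i| < B$ for a suitable $B > 0$.

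Finally, absolute and uniform convergence on the specified polydisk-like domain implies that the sum is an analytic function of $(t,\delta)$ there, since each partial sum is a polynomial in $(t,\delta)$. The main obstacle is the combinatorial bound in the first step; the rest is a standard termwise geometric-series estimate.
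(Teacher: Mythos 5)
Your approach matches the paper's essentially step for step: both bound the number of loops by $N/2$ (half the total boundary points), bound the number of labeled planar configurations by a factorial times an exponential in $N$, and dominate the series by a product of geometric series. The surface differences (your $r!\,A^N$ followed by the multinomial inequality $r!/\prod_i n_i!\le k^r$, versus the paper's direct $\prod_i n_i!\,A^{\sum n_i}$; your $(1+|\delta|)^{N/2}$ versus the paper's $|\delta|^{N/2}$) are bookkeeping variants of the same estimate.

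The one place where your write-up falls short of a proof is precisely the step you flag as "the main technical step." The justification ``fix a linear order on the $r$ boxes, then what remains is a non-crossing pairing of $N$ linearly-ordered points, at most $C_{N/2}$ of them'' is a heuristic, not an argument: a planar tangle with $r$ marked boxes embedded in a disk carries more information than a linear order plus a one-dimensional non-crossing matching (the arcs can route around the boxes in inequivalent ways, and the boxes sit in a two-dimensional arrangement), and it is not established that the passage from isotopy classes of tangles to (order, matching)-pairs is well-defined and injective. The bound itself is correct — it is exactly the standard fact that the number of rooted labeled planar maps with prescribed vertex degrees grows at most like $(\textrm{labels})!\cdot A^{\#\textrm{edges}}$, which is what the paper imports from \cite[p.~255]{guionnet-edouard:combRM} — but proving it requires a genuine planar-map enumeration (spanning-tree encoding or a generating-function argument), not merely Catalan numbers. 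Your alternative route through the convergence of the Gaussian matrix integral is legitimate and would close the gap; the Catalan heuristic by itself does not.
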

Indeed, the number of loops
is bounded by one half of the total number of boundary points, i.e., $\frac{1}{2}(b + \sum n_i b_i)$, whereas before $b_i$ is the number of boundary points of $S_i$ and $b$ is the number of boundary points of $S$.
Therefore, the coefficients of the series are simply bounded 
by $$C(n_1,\ldots,n_k):=|\delta|^{ \frac{1}{2} b}
\prod_{i=1}^{k}\frac{(|t_i||\delta|^{\frac{b_i}{2}}) ^{n_i }}{n_i!}$$
and the sum can be enlarged to all planar maps
that can be built over $n_i$ (resp. one)
 vertices with degree given by the number
of boundary points of $S_i$, $1\le i\le k$ (resp. $S$).
It is well known, see e.g. \cite[p. 255]{guionnet-edouard:combRM},
that the number of such maps grows as 
$\prod_{i=1}^k n_i! A^{ n_1+\cdots+n_k}$ for 
some finite constant $A$.  Hence, $ \mathscr{T}_{t,\delta} (S)$
is an absolutely converging series on $\max_{1\le i\le
k}|t_i||\delta|^{\frac{b_i}{2}}A<1$, domain on which it is analytic.

Note that it is expected that as one increases the 
$t_i$, one should eventually
reach a hypersurface of singularities which signals the boundary 
of the analyticity region in the variables $t=(t_1,\ldots,t_k)$.
This singularity is usually present in matrix models and is explained by
the proliferation of planar maps: typically the number of planar maps grows
exponentially with its number of vertices and this produces a finite radius
of convergence of the corresponding generating series.
This is certainly what happens in the cases studied in section \ref{few}.
The model of \ref{cupmodsec} is closely related to the so-called 
one-matrix-model, whose possible ``critical behaviors'' (i.e., types of
singularities) are well-known \cite{DFGZJ}.
No exact solution is known for the model of \ref{onmsec}, but 
a conjecture on its critical exponent is proposed in \cite{dif}.
Finally, it is expected that the model of \ref{doublecupsec} has a critical
behavior of the type of the $O(n)$ model on the line $A=B$, and that of pure
gravity on other lines of constant ratio $A/B\ne 1$.
It would be interesting to find an interpretation of these critical
behaviors in the present context, i.e.\ in terms of properties of $\ourtrt$.

\newpage
\section{First few diagrams of the double cup matrix model}
The parameters $a_1,a_2,b_1,b_2$ of the auxiliary model of 
sect.~\ref{subsec:auxmm} can, as already mentioned,
be fixed by appropriate expansion of $\omega_+(u)$ around $\pm u_\infty$.
In practice, the resulting equations reduce to first degree equations on the 
condition that one solves them parametrically in terms of
the elliptic nome $p:=\exp(-\pi K'/K)$ and of $u_\infty$.
All other quantities can then be obtained in the same parameterization.

As a check, we shall here write the first few orders of the expansion
for small $\alpha$, $\beta$.
The correct scaling as $\alpha,\beta\to 0$
at fixed ratio is to keep the quantity
\[
\kappa:=p e^{-2 i \pi u_\infty/K}
\]
fixed while sending $p$ to zero, so that $\alpha/\beta\sim\kappa^2$ and $\alpha\beta\sim p^2$. 
We find the following expansions:
\begin{align*}
\alpha&=\kappa\Big[
p- (\kappa(3 \delta +2)+ \kappa^{-1}( 2 \delta 
 +6))p^2\\
&+
\left((8 \delta^2 +5\delta+3) \kappa^2
   +(8 \delta^2 +45\delta+24) +(5 \delta^2 +12\delta+17)\kappa^{-2}\right)p^3
+O(p^4)
\Big]
\\
\beta&=\kappa^{-1}\Big[
p- (\kappa^{-1}(3 \delta +2)+ \kappa( 2 \delta 
 +6))p^2\\
&+
\left((8 \delta^2 +5\delta+3) \kappa^{-2}
   +(8 \delta^2 +45\delta+24) +(5 \delta^2 +12\delta+17)\kappa^2\right)p^3
+O(p^4)
\Big]
\\
a_1&=\kappa\left[-2p^{1/2}+\delta p+2((3+\delta)\kappa^{-2}+(1+\delta)\kappa^2)p^{3/2}+O(p^2)\right]
\\
a_2&=\kappa\left[2p^{1/2}+\delta p-2((3+\delta)\kappa^{-2}+(1+\delta)\kappa^2)p^{3/2}+O(p^2)\right]
\\
b_1&=1-\kappa^{-1}\left[2p^{1/2}+\delta p-2((1+\delta)\kappa^{-2}+(3+\delta)\kappa^2)p^{3/2}+O(p^2)\right]
\\
b_2&=1-\kappa^{-1}\left[-2p^{1/2}+\delta p+2((1+\delta)\kappa^{-2}+(3+\delta)\kappa^2)p^{3/2}+O(p^2)\right]
\\
\int x\,& d\tilde\nu_+(x)
=\kappa\Big[
\delta p- \delta  (\kappa(2 \delta+1) +\kappa^{-1}(\delta +5))p^2\\
&+
\delta  \left(
\kappa^2(4\delta^2+1)
+3\delta^2+24\delta+1
+\kappa^{-2}(2\delta^2+4\delta+11)
\right)
p^3
+O(p^4)\Big]
\\
\int x^2\,& d\tilde\nu_+(x)
=\kappa\Big[
p+(\kappa(\delta^2-2\delta-2)-2\kappa^{-1}(3+\delta))p^2\\
&+
(
\kappa^2(-4 \delta ^3+3 \delta ^2+3 \delta+3)
+(-2\delta ^3-4 \delta ^2+36 \delta +24)\\
&+\kappa^{-2}(5 \delta ^2+12 \delta +17)
)p^3
+O(p^4)\Big]
\end{align*}
(the expansions of $a_1,a_2,b_1,b_2$ are only given up to order $p^{3/2}$
because of issues of space).

Inverting the first two expansions and inserting the result in the last two yields
\begin{multline*}
\frac{1}{\alpha}\int x\, d\tilde\nu_+(x)
=\delta+\delta(1+\delta)(\alpha+\beta) \\
+\delta((2+5\delta+2\delta^2)\alpha^2+(6+8\delta+4\delta^2)\alpha\beta+(2+5\delta+2\delta^2)\beta^2)+\cdots
\end{multline*}

Similarly,
$$
\frac{1}{\alpha^2}\Big(\int x^2\,d\tilde\nu_+(x)-\alpha\Big)
=\delta(1+\delta)+\delta(2+5\delta+2\delta^2)\alpha +\delta(3+4\delta+2\delta^2)\beta+\cdots
$$
According to Prop.~\ref{prop:conv},
$\int x\, d\tilde\nu_+(x)/\alpha$ corresponds diagrammatically to:
\begin{align*}
&\frac{1}{\alpha}\int x\,d\tilde\nu_+(x)=\delta
\begin{tikzpicture}[baseline=0]
\cupb{cup}(0,0)
\fixbb
\path[connect] (cup.north west) .. controls ++(0,0.5) and ++(0,0.5) .. (cup.north east);
\end{tikzpicture}
+
\alpha\delta
\begin{tikzpicture}[baseline=0]
\cupb{cup}(0,0)
\vertexvw{vert}(0,1.2)
\fixbb
\path[connect] (cup.north west) .. controls ++(-0.2,0.1) and ++(-1.1,-0.3) .. (vert.north west) .. controls (vert) .. (vert.south west) .. controls ++(-0.3,-0.3) and ++(0.3,-0.3) .. (vert.south east) .. controls (vert) .. (vert.north east) .. controls ++(1.1,-0.3) and ++(0.2,0.1) .. (cup.north east);
\end{tikzpicture}
+
\beta\delta
\begin{tikzpicture}[baseline=0]
\cupb{cup}(0,0)
\vertexvb{vert}(0,1.2)
\fixbb
\path[connect,bend left] (cup.north west) to (vert.south west) .. controls (vert) .. (vert.north west) .. controls ++(-0.3,0.3) and ++(0.3,0.3) .. (vert.north east) .. controls (vert) .. (vert.south east) to (cup.north east);
\end{tikzpicture}
\\ & +
\alpha\delta^2
\begin{tikzpicture}[baseline=0]
\cupb{cup}(0,0)
\vertexhw{vert}(0,1.2)
\fixbb
\path[connect,bend left] (cup.north west) to (vert.south west) .. controls (vert) .. (vert.south east) to (cup.north east);
\path[connect] (vert.north west) .. controls ++(-0.3,0.3) and ++(0.3,0.3) .. (vert.north east) .. controls (vert) .. (vert.north west);
\end{tikzpicture}
+
\beta\delta^2
\begin{tikzpicture}[baseline=0]
\cupb{cup}(0,0)
\vertexhb{vert}(0,1.2)
\fixbb
\path[connect] (cup.north west) .. controls ++(-0.2,0.1) and ++(-1.1,-0.3) .. (vert.north west) .. controls (vert) .. (vert.north east) .. controls ++(1.1,-0.3) and ++(0.2,0.1) .. (cup.north east) (vert.south west) .. controls ++(-0.3,-0.3) and ++(0.3,-0.3) .. (vert.south east) .. controls (vert) .. (vert.south west);
\end{tikzpicture}
\\
&+\alpha^2\delta^2\left(
\begin{tikzpicture}[baseline=1.1cm]
\cupb{cup}(0,0)
\vertexhw{a}(-0.6,1.2)
\vertexhw{b}(0.6,1.2)
\fixbb
\path[connect] (cup.north west) .. controls ++(0,0.3) and ++(-0.3,-0.3) .. (a.south west) .. controls (a) .. (a.south east) to [bend right] (b.south west) .. controls (b) .. (b.south east) .. controls ++(0.3,-0.3) and ++(0,0.3) .. (cup.north east);
\path[connect] (a.north west) .. controls ++(-0.3,0.5) and ++(0.3,0.5) .. (b.north east) .. controls (b) .. (b.north west) to[bend right] (a.north east) .. controls (a) .. (a.north west);
\end{tikzpicture}
+
\begin{tikzpicture}[baseline=1.1cm]
\cupb{cup}(0,0)
\vertexhw{a}(-0.6,1.4)
\vertexvw{b}(0.6,1.4)
\fixbb
\path[connect] (cup.north west) .. controls ++(0,0.3) and ++(-0.3,-0.3) .. (a.south west) .. controls (a) .. (a.south east) .. controls ++(0.3,-0.3) and ++(-0.3,0.3) .. (b.north west) .. controls (b) .. (b.south west) .. controls ++(-0.3,-0.3) and ++(0.3,-0.3) .. (b.south east) .. controls (b) .. (b.north east) .. controls ++(0.4,0.2) and ++(1.5,0) .. (cup.north east);
\path[connect] (a.north west) .. controls ++(-0.3,0.3) and ++(0.3,0.3) .. (a.north east) .. controls (a) .. (a.north west);
\end{tikzpicture}
+
\begin{tikzpicture}[baseline=1.1cm]
\cupb{cup}(0,0)
\vertexvw{a}(-0.6,1.4)
\vertexhw{b}(0.6,1.4)
\fixbb
\path[connect] (cup.north east) .. controls ++(0,0.3) and ++(0.3,-0.3) .. (b.south east) .. controls (b) .. (b.south west) .. controls ++(-0.3,-0.3) and ++(0.3,0.3) .. (a.north east) .. controls (a) .. (a.south east) .. controls ++(0.3,-0.3) and ++(-0.3,-0.3) .. (a.south west) .. controls (a) .. (a.north west) .. controls ++(-0.4,0.2) and ++(-1.5,0) .. (cup.north west);
\path[connect] (b.north east) .. controls ++(0.3,0.3) and ++(-0.3,0.3) .. (b.north west) .. controls (b) .. (b.north east);
\end{tikzpicture}\right. \\
&\left. +
\begin{tikzpicture}[baseline=1.1cm]
\cupb{cup}(0,0)
\vertexvw{a}(-0.6,1.4)
\vertexvw{b}(0.6,1.4)
\fixbb
\path[connect] (cup.north west) .. controls ++(-1.5,0.1) and ++(-0.4,0.2) .. (a.north west) .. controls (a) .. (a.south west) .. controls ++(-0.3,-0.3) and ++(0.3,-0.3) .. (b.south east) .. controls (b) .. (b.north east) .. controls ++(0.4,0.2) and ++(1.5,0.1) .. (cup.north east);
\path[connect,bend left] (b.south west) to (a.south east) .. controls (a) .. (a.north east) to (b.north west) .. controls (b) .. (b.south west);
\end{tikzpicture}
+
\begin{tikzpicture}[baseline=1.1cm]
\cupb{cup}(0,0)
\vertexhw{a}(0,1.2)
\vertexvw{b}(0,2.4)
\fixbb
\path[connect,bend left] (cup.north west) to (a.south west) .. controls (a) .. (a.south east) to (cup.north east);
\path[connect] (a.north west) .. controls ++(-0.2,0.1) and ++(-1.1,-0.3) .. (b.north west) .. controls (b) .. (b.south west) .. controls ++(-0.3,-0.3) and ++(0.3,-0.3) .. (b.south east) .. controls (b) .. (b.north east) .. controls ++(1.1,-0.3) and ++(0.2,0.1) .. (a.north east) .. controls (a) .. (a.north west);
\end{tikzpicture}
\right)
+\alpha^2\delta\left(
\begin{tikzpicture}[baseline=1.1cm]
\cupb{cup}(0,0)
\vertexvw{a}(-0.6,1.4)
\vertexvw{b}(0.6,1.4)
\fixbb
\path[connect] (cup.north west) .. controls ++(-1.5,0.1) and ++(-0.4,0.2) .. (a.north west) .. controls (a) .. (a.south west) .. controls ++(-0.3,-0.3) and ++(0.3,-0.3) .. (a.south east) .. controls (a) .. (a.north east) to[bend left] (b.north west) .. controls (b) .. (b.south west) .. controls ++(-0.3,-0.3) and ++(0.3,-0.3) .. (b.south east) .. controls (b) .. (b.north east) .. controls ++(0.4,0.2) and ++(1.5,0.1) .. (cup.north east);
\end{tikzpicture}
+
\begin{tikzpicture}[baseline=1.1cm]
\cupb{cup}(0,0)
\vertexvw{a}(0,1.2)
\vertexvw{b}(0,2.4)
\fixbb
\path[connect] (cup.north west) .. controls ++(-0.2,0.1) and ++(-1.1,-0.3) .. (b.north west) .. controls (b) .. (b.south west) to[bend right] (a.north west) .. controls (a) .. (a.south west) .. controls ++(-0.3,-0.3) and ++(0.3,-0.3) .. (a.south east) .. controls (a) .. (a.north east) to[bend right] (b.south east) .. controls (b) .. (b.north east) .. controls ++(1.1,-0.3) and ++(0.2,0.1) .. (cup.north east);
\end{tikzpicture}
\right)\\&
+\alpha^2\delta^3\left(
\begin{tikzpicture}[baseline=1.1cm]
\cupb{cup}(0,0)
\vertexhw{a}(-0.6,1.2)
\vertexhw{b}(0.6,1.2)
\fixbb
\path[connect] (cup.north west) .. controls ++(0,0.3) and ++(-0.3,-0.3) .. (a.south west) .. controls (a) .. (a.south east) to[bend right] (b.south west) .. controls (b) .. (b.south east) .. controls ++(0.3,-0.3) and ++(0,0.3) .. (cup.north east);
\path[connect] (a.north west) .. controls ++(-0.3,0.3) and ++(0.3,0.3) .. (a.north east) .. controls (a) .. (a.north west);
\path[connect] (b.north west) .. controls ++(-0.3,0.3) and ++(0.3,0.3) .. (b.north east) .. controls (b) .. (b.north west);
\end{tikzpicture}
+
\begin{tikzpicture}[baseline=1.1cm]
\cupb{cup}(0,0)
\vertexhw{a}(0,1.2)
\vertexhw{b}(0,2.4)
\fixbb
\path[connect,bend left] (cup.north west) to (a.south west) .. controls (a) .. (a.south east) to (cup.north east);
\path[connect,bend left] (a.north west) to (b.south west) .. controls (b) .. (b.south east) to (a.north east) .. controls (a) .. (a.north west);
\path[connect] (b.north west) .. controls ++(-0.3,0.3) and ++(0.3,0.3) .. (b.north east) .. controls (b) .. (b.north west);
\end{tikzpicture}
\right)
+\cdots
\end{align*}
\vfill\eject
Similarly, we have:
\begin{align*}
&\frac{1}{\alpha^2}\Big(\int x^2 d\tilde\nu_+(x)-\alpha\Big)
=\delta
\begin{tikzpicture}[baseline=0]
\cupcupb{cup}(0,0)
\path[connect] (cupa.north west) .. controls ++(0,0.5) and ++(0,0.5) .. (cupa.north east);
\path[connect] (cupb.north west) .. controls ++(0,0.5) and ++(0,0.5) .. (cupb.north east);
\fixbb
\end{tikzpicture}
+\delta^2
\begin{tikzpicture}[baseline=0]
\cupcupb{cup}(0,0)
\fixbb
\path[connect] (cupa.north west) .. controls ++(0,1) and ++(0,1) .. (cupb.north east) -- (cupb.north west) .. controls ++(0,0.4) and ++(0,0.4) .. (cupa.north east);
\end{tikzpicture}
\\& +\alpha\delta\Bigg(
\begin{tikzpicture}[baseline=0]
\cupcupb{cup}(0,0)
\vertexvw{vert}(0,1.4)
\fixbb
\path[connect] (cupa.north west) .. controls ++(0,1) and ++(-0.3,0.3) .. (vert.north west) .. controls (vert) .. (vert.south west) .. controls ++(-0.3,-0.3) and ++(0.3,-0.3) .. (vert.south east) .. controls (vert) .. (vert.north east) .. controls ++(0.3,0.3) and ++(0,1) .. (cupb.north east) -- (cupb.north west) .. controls ++(0,0.2) and ++(0,0.2) .. (cupa.north east);
\end{tikzpicture}
+
\begin{tikzpicture}[baseline=0]
\cupcupb{cup}(0,0)
\vertexvw{vert}(0,1.2)
\fixbb
\path[connect] (cupa.north west) .. controls ++(0,2.5) and ++(0,2.5) .. (cupb.north east) -- (cupb.north west) .. controls ++(0,0.1) and ++(0.1,-0.1) .. (vert.south east) .. controls (vert) .. (vert.north east) .. controls ++(0.3,0.3) and ++ (-0.3,0.3) .. (vert.north west) .. controls (vert) .. (vert.south west) .. controls ++(-0.1,-0.1) and ++(0,0.1) .. (cupa.north east);
\end{tikzpicture}
\Bigg)
\\
&+\alpha\delta^2\Bigg(
\begin{tikzpicture}[baseline=0]
\cupcupb{cup}(0,0)
\vertexvw{vert}(0,1.2)
\fixbb
\path[connect] (cupa.north west) .. controls ++(0,0.5) and ++(-0.3,0.3) .. (vert.north west) .. controls (vert) .. (vert.south west) .. controls ++(-0.1,-0.1) and ++(0,0.1) .. (cupa.north east);
\path[connect] (cupb.north east) .. controls ++(0,0.5) and ++(0.3,0.3) .. (vert.north east) .. controls (vert) .. (vert.south east) .. controls ++(0.1,-0.1) and ++(0,0.1) .. (cupb.north west);
\end{tikzpicture}
+
\begin{tikzpicture}[baseline=0]
\cupcupb{cup}(0,0)
\vertexhw{vert}(0,1.2)
\fixbb
\path[connect] (cupa.north west) .. controls ++(0,0.2) and ++(-0.3,-0.3) .. (vert.south west) .. controls (vert) .. (vert.south east) .. controls ++(0.3,-0.3) and ++(0,0.2) .. (cupb.north east) -- (cupb.north west) .. controls ++(0,0.2) and ++(0,0.2) .. (cupa.north east);
\path[connect] (vert.north west) .. controls ++(-0.3,0.3) and ++(0.3,0.3) .. (vert.north east) .. controls (vert) .. (vert.north west);
\end{tikzpicture}
+
\begin{tikzpicture}[baseline=0]
\cupcupb{cup}(0,0)
\vertexhw{vert}(0,1.4)
\fixbb
\path[connect] (cupa.north east) .. controls ++(-1,1) and ++(-0.2,0) .. (vert.north west) .. controls (vert) .. (vert.north east) .. controls ++(0.2,0) and ++(1,1) .. (cupb.north west) -- (cupb.north east) .. controls ++(0.6,2.7) and ++(-0.6,2.7) .. (cupa.north west);
\path[connect] (vert.south west) .. controls ++(-0.3,-0.3) and ++(0.3,-0.3) .. (vert.south east) .. controls (vert) .. (vert.south west);
\end{tikzpicture}
 \\ &+
\begin{tikzpicture}[baseline=0]
\cupcupb{cup}(0,0)
\vertexvw{vert}(0.6,1.2)
\fixbb
\path[connect] (cupa.north west) .. controls ++(0,0.5) and ++(0,0.5) .. (cupa.north east);
\path[connect] (cupb.north west) .. controls ++(-0.2,0.1) and ++(-1.1,-0.3) .. (vert.north west) .. controls (vert) .. (vert.south west) .. controls ++(-0.3,-0.3) and ++(0.3,-0.3) .. (vert.south east) .. controls (vert) .. (vert.north east) .. controls ++(1.1,-0.3) and ++(0.2,0.1) .. (cupb.north east);
\end{tikzpicture}
+
\begin{tikzpicture}[baseline=0]
\cupcupb{cup}(0,0)
\vertexvw{vert}(-0.6,1.2)
\fixbb
\path[connect] (cupb.north west) .. controls ++(0,0.5) and ++(0,0.5) .. (cupb.north east);
\path[connect] (cupa.north west) .. controls ++(-0.2,0.1) and ++(-1.1,-0.3) .. (vert.north west) .. controls (vert) .. (vert.south west) .. controls ++(-0.3,-0.3) and ++(0.3,-0.3) .. (vert.south east) .. controls (vert) .. (vert.north east) .. controls ++(1.1,-0.3) and ++(0.2,0.1) .. (cupa.north east);
\end{tikzpicture}
\Bigg) \\ &
+\alpha\delta^3\Bigg(
\begin{tikzpicture}[baseline=0]
\cupcupb{cup}(0,0)
\vertexhw{vert}(0.6,1.2)
\fixbb
\path[connect] (cupa.north west) .. controls ++(0,0.5) and ++(0,0.5) .. (cupa.north east);
\path[connect,bend left] (cupb.north west) to (vert.south west) .. controls (vert) .. (vert.south east) to (cupb.north east);
\path[connect] (vert.north west) .. controls ++(-0.3,0.3) and ++(0.3,0.3) .. (vert.north east) .. controls (vert) .. (vert.north west);
\end{tikzpicture}
+
\begin{tikzpicture}[baseline=0]
\cupcupb{cup}(0,0)
\vertexhw{vert}(-0.6,1.2)
\fixbb
\path[connect] (cupb.north west) .. controls ++(0,0.5) and ++(0,0.5) .. (cupb.north east);
\path[connect,bend left] (cupa.north west) to (vert.south west) .. controls (vert) .. (vert.south east) to (cupa.north east);
\path[connect] (vert.north west) .. controls ++(-0.3,0.3) and ++(0.3,0.3) .. (vert.north east) .. controls (vert) .. (vert.north west);
\end{tikzpicture}
\Bigg)
\\
&+\beta\delta\Bigg(
\begin{tikzpicture}[baseline=0]
\cupcupb{cup}(0,0)
\vertexhb{vert}(0,1.2)
\fixbb
\path[connect] (cupa.north west) .. controls ++(0,0.5) and ++(-0.3,0.3) .. (vert.north west) .. controls (vert) .. (vert.north east) .. controls ++(0.3,0.3) and ++(0,0.5) .. (cupb.north east) -- (cupb.north west) .. controls ++(0,0.1) and ++(0.1,-0.1) .. (vert.south east) .. controls (vert) .. (vert.south west) .. controls ++(-0.1,-0.1) and ++(0,0.1) .. (cupa.north east);
\end{tikzpicture}
+
\begin{tikzpicture}[baseline=0]
\cupcupb{cup}(0,0)
\vertexvb{vert}(0,1.2)
\fixbb
\path[connect] (cupa.north west) .. controls ++(0,0.2) and ++(-0.3,-0.3) .. (vert.south west) .. controls (vert) .. (vert.north west) .. controls ++(-0.3,0.3) and ++(0.3,0.3) .. (vert.north east) .. controls (vert) .. (vert.south east) .. controls ++(0.3,-0.3) and ++(0,0.2) .. (cupb.north east) -- (cupb.north west) .. controls ++(0,0.2) and ++(0,0.2) .. (cupa.north east);
\end{tikzpicture}
+
\begin{tikzpicture}[baseline=0]
\cupcupb{cup}(0,0)
\vertexvb{vert}(0,1.4)
\fixbb
\path[connect] (cupa.north east) .. controls ++(-1,1) and ++(-0.2,0) .. (vert.north west) .. controls (vert) .. (vert.south west) .. controls ++(-0.3,-0.3) and ++(0.3,-0.3) .. (vert.south east) .. controls (vert) .. (vert.north east) .. controls ++(0.2,0) and ++(1,1) .. (cupb.north west) -- (cupb.north east) .. controls ++(0.6,2.7) and ++(-0.6,2.7) .. (cupa.north west);
\end{tikzpicture}
\Bigg) \\
&+\beta\delta^2\Bigg(
\begin{tikzpicture}[baseline=0]
\cupcupb{cup}(0,0)
\vertexhb{vert}(0,1.4)
\fixbb
\path[connect] (cupa.north west) .. controls ++(0,1) and ++(-0.3,0.3) .. (vert.north west) .. controls (vert) .. (vert.north east) .. controls ++(0.3,0.3) and ++(0,1) .. (cupb.north east) -- (cupb.north west) .. controls ++(0,0.2) and ++(0,0.2) .. (cupa.north east) (vert.south west) .. controls ++(-0.3,-0.3) and ++(0.3,-0.3) .. (vert.south east) .. controls (vert) .. (vert.south west);
\end{tikzpicture}
+
\begin{tikzpicture}[baseline=0]
\cupcupb{cup}(0,0)
\vertexhb{vert}(0,1.2)
\fixbb
\path[connect] (cupa.north west) .. controls ++(0,2.5) and ++(0,2.5) .. (cupb.north east) -- (cupb.north west) .. controls ++(0,0.1) and ++(0.1,-0.1) .. (vert.south east) .. controls (vert) .. (vert.south west) .. controls ++(-0.1,-0.1) and ++(0,0.1) .. (cupa.north east) (vert.north east) .. controls ++(0.3,0.3) and ++ (-0.3,0.3) .. (vert.north west) .. controls (vert) .. (vert.north east);
\end{tikzpicture}
 +
\begin{tikzpicture}[baseline=0]
\cupcupb{cup}(0,0)
\vertexvb{vert}(0.6,1.2)
\fixbb
\path[connect] (cupa.north west) .. controls ++(0,0.5) and ++(0,0.5) .. (cupa.north east);
\path[connect,bend left] (cupb.north west) to (vert.south west) .. controls (vert) .. (vert.north west) .. controls ++(-0.3,0.3) and ++(0.3,0.3) .. (vert.north east) .. controls (vert) .. (vert.south east) to (cupb.north east);
\end{tikzpicture}
\\& +
\begin{tikzpicture}[baseline=0]
\cupcupb{cup}(0,0)
\vertexvb{vert}(-0.6,1.2)
\fixbb
\path[connect] (cupb.north west) .. controls ++(0,0.5) and ++(0,0.5) .. (cupb.north east);
\path[connect,bend left] (cupa.north west) to (vert.south west) .. controls (vert) .. (vert.north west) .. controls ++(-0.3,0.3) and ++(0.3,0.3) .. (vert.north east) .. controls (vert) .. (vert.south east) to (cupa.north east);
\end{tikzpicture}
\Bigg)
+\beta\delta^3\Bigg(
\begin{tikzpicture}[baseline=0]
\cupcupb{cup}(0,0)
\vertexhb{vert}(0.6,1.2)
\fixbb
\path[connect] (cupa.north west) .. controls ++(0,0.5) and ++(0,0.5) .. (cupa.north east);
\path[connect] (cupb.north west) .. controls ++(-0.2,0.1) and ++(-1.1,-0.3) .. (vert.north west) .. controls (vert) .. (vert.north east) .. controls ++(1.1,-0.3) and ++(0.2,0.1) .. (cupb.north east) (vert.south west) .. controls ++(-0.3,-0.3) and ++(0.3,-0.3) .. (vert.south east) .. controls (vert) .. (vert.south west);
\end{tikzpicture}
+
\begin{tikzpicture}[baseline=0]
\cupcupb{cup}(0,0)
\vertexhb{vert}(-0.6,1.2)
\fixbb
\path[connect] (cupb.north west) .. controls ++(0,0.5) and ++(0,0.5) .. (cupb.north east);
\path[connect] (cupa.north west) .. controls ++(-0.2,0.1) and ++(-1.1,-0.3) .. (vert.north west) .. controls (vert) .. (vert.north east) .. controls ++(1.1,-0.3) and ++(0.2,0.1) .. (cupa.north east) (vert.south west) .. controls ++(-0.3,-0.3) and ++(0.3,-0.3) .. (vert.south east) .. controls (vert) .. (vert.south west);
\end{tikzpicture}
\Bigg)+\cdots
\end{align*}
\vfill\eject
%

\section{The stitching of two planar algebras.}\label{sec:couplings}
Let ${\mathcal P}=P_n^{\pm}$ and ${\mathcal Q}=Q_n^{\pm}$ be two subfactor planar algebras
(assumed spherical for simplicity). We define a new subfactor planar algebra

$${\PP}\copyrightsign \Q=(P \copyrightsign Q)_n^{\pm}$$ which will not be irreducible even if both $\PP$ and $\Q$ are.

The vector spaces of $\PP\copyrightsign\Q$ are defined as follows. Fix the even number $2n$ and
consider the possible partititions $\pi$ of $\{1,2,3,...,2n\}$ into two subsets of even sizes $2(p_\pi)$ and 
$2(q_\pi)$, whose elements we call of type   $\PP$ and  $\Q$ respectively. Then
\begin{gather*}
(P \copyrightsign Q)_n^{\pm}=\bigoplus_{\pi}(P \copyrightsign Q)_\pi^{\pm} 
\textrm{    \qquad       with }(P \copyrightsign Q)_\pi^{\pm}= P_{p_\pi}^{\pm}\otimes Q_{q_\pi}^{\pm}.
\end{gather*}

To define the action of planar tangles on $\PP\copyrightsign\Q$ we use the notion
of a {\emph string-coloring}.
Given a planar tangle  $T$ with discs $D_i$ as in section 2.1, a string-colouring
$\sigma$ of $T$ is an assignment
of $\PP$ or $\Q$ to every string of $T$ so that if one removes all the strings of
either color, one gets
two planar tangles $T_{\PP}^\sigma$ and $T_{\Q}^\sigma$ when one takes as initial
segments, with shading, the
intervals containing the initial segments of $T$. In particular there must be an
even number of strings of 
each color incident to every disc of $T$, but this is not a sufficient condition for
the shadings to be coherent.

A string-coloring $\sigma$ of $T$ defines:

\renewcommand{\theenumi}{\alph{enumi}}
\begin{enumerate}
\item a partition $\pi_\sigma$ of the boundary points for each disc (numbered from $1$ to $2b_j$ starting at the first boundary point after the
initial segment in clockwise order)into  $\PP$ and $\Q$ points, and 
\item
two planar tangles $T_{\PP}^\sigma$ and $T_{\Q}^\sigma$ by removing all the
strings of the other color and taking as initial segments  for discs the ones containing the initial segments of $T$.
The shadings of $T_{\PP}^\sigma$ and $T_{\Q}^\sigma$ are determined by that of the initial segment of the outside boundary of $T$.
\end{enumerate}

By multilinearity it suffices to define the action $Z_T$ of a planar tangle $T$, with $k$ internal discs, on a $k-$tuple $(x_1,x_2,...,x_k)$
 of  elements of $\PP \copyrightsign \Q$ where  $x_i=\sum_\pi v_i^\pi \otimes w_i^\pi$ with $v_i\in P_{p_\pi}^{\pm}$ and $w_i Q_{q_\pi}^{\pm} $.
 
Suppose such an element $x_i$ of  $(P \copyrightsign  Q)_{b_i}^{\pm}$ is assigned to each $D_i$, then we define
$$M_T(x_1,x_2,...,x_k)= \sum_{\sigma}M_{T_\PP^\sigma}(v_1^{\pi_\sigma}, v_2^{\pi_\sigma}, ...,v_k^{\pi_\sigma})\otimes
M_{T_\Q^\sigma}(w_i^{\pi_\sigma}, w_2^{\pi_\sigma}, ...,w_k^{\pi_\sigma})$$
 where $\sigma$ runs over all the string-colorings of $T$. 
 
 It is clear that this action is compatible with the gluings.
 
 The $*$-structure on ${\PP} \copyrightsign \Q$ is derived in
 the obvious way from those of $\PP$ and $\Q$.
 
\subsection*{Notes}  (\romannumeral 1) Another way to describe the action is as follows: suppose elements  $v_i\otimes w_i \in (P \copyrightsign Q)_{\pi_i}^{\pm}$
 are assigned to the internal discs $D_i$ of $T$. Then the value of $M_T$ is zero unless the colouring of the boundary points
 implied by the $\pi_i$ extends to a string colouring  of $T$ and then this value is the sum over all such extensions $\sigma$ of
  $M_{T_\PP^\sigma}(v_1,v_2,...v_k)\otimes M_{T_\Q^\sigma}(w_1,w_2,...,w_k)$. If there are strings connecting the outside boundary
  of $T$ to itself this element will lie in more than one direct summand of $(P \copyrightsign Q)_n^{\pm}$.
  
  (\romannumeral 2) It is clear from (\romannumeral 1) that the loop parameter of ${\PP} \copyrightsign \Q$ is the \emph{sum} of the loop parameters
  of $\PP$ and $\Q$ respectively.
  
  (\romannumeral 3) Positive definiteness of the inner product is also clear from (\romannumeral 1)-the various $(P \copyrightsign Q)_\pi$ 
  are orthogonal and in each one the inner product is just the tensor product inner product for $P_{p_\pi}^{\pm}\otimes Q_{q_\pi}^{\pm}$.
  
  (\romannumeral 4) It is the exponential generating functions for $\PP$ and $\Q$ that behaves well under this operation.
  
  (\romannumeral 5) For the inductive limit algebra structure of ${\PP} \copyrightsign \Q$ a complete set of centrally orthogonal 
  minimal projections is given by the tensor products of such sets of projections for $\PP$ and $\Q$. Thus the vertices of the principal
  graph of ${\PP} \copyrightsign \Q$ is the Cartesian product of the vertices of the principal graphs of $\PP$ and $\Q$ and 
  $(p,q)$ is adjacent to $(p',q')$ iff $p$ is adjacent to $p'$ or $q$ is adjacent to $q'$ but not both.
  For instance if $\PP$ has principal graph $\mathbb{A}_3$ and $\Q$ has principal graph $\mathbb{A}_4$, then that of ${\PP} \copyrightsign \Q$ is:
  $$\begin{tikzpicture}
		\coordinate (A1) at (0,0);
		\coordinate (A2) at (1,0);
		\coordinate (A3) at (2,0);
		\coordinate (A4) at (3,0);
		\coordinate (B1) at (0,1);
		\coordinate (B2) at (1,1);
		\coordinate (B3) at (2,1);
		\coordinate (B4) at (3,1);
		\coordinate (C1) at (0,2);
		\coordinate (C2) at (1,2);
		\coordinate (C3) at (2,2);
		\coordinate (C4) at (3,2);
		\draw (A1) --(A2) -- (A3) -- (A4);
		\draw (B1) -- (B2) -- (B3) -- (B4);
		\draw (C1) -- (C2) -- (C3) -- (C4);
		\draw (A1) -- (B1) -- (C1);
		\draw (A2) -- (B2) -- (C2);
		\draw (A3) -- (B3) -- (C3);
		\draw (A4) -- (B4) -- (C4);
		\node [fill=black,inner sep=1pt] at (A1){};
		\node [fill=black,inner sep=1pt] at (A2){};
		\node [fill=black,inner sep=1pt] at (A3){};
		\node [fill=black,inner sep=1pt] at (A4){};
		\node [fill=black,inner sep=1pt] at (B1){};
		\node [fill=black,inner sep=1pt] at (B2){};
		\node [fill=black,inner sep=1pt] at (B3){};
		\node [fill=black,inner sep=1pt] at (B4){};
		\node [fill=black,inner sep=1pt] at (C1){};
		\node [fill=black,inner sep=1pt] at (C2){};
		\node [fill=black,inner sep=1pt] at (C3){};
		\node [fill=black,inner sep=1pt] at (C4){};
		\draw[decorate,decoration=brace] (3,-0.1)--(0,-0.1) node [below, midway]{$\mathbb{A}_4$};
		\draw[decorate,decoration=brace] (-0.1,0)--(-0.1,2) node [left, midway]{$\mathbb{A}_3$};

	\end{tikzpicture}
$$
  
  (\romannumeral 6) We see that in the loop basis description off ${\PP} \copyrightsign \Q$, 
 from each vertex of a loop one may choose to travel  on the principal graph of $\PP$ or that of  $\Q$.
    
  (\romannumeral 7) If we were dealing with unshaded planar algebras we would simply remove the
  restrictions on the parity of the numbers of boundary points and the partitions $\pi$.
  
  (\romannumeral 8) A TL basis diagram in   $\PP \copyrightsign\Q$ consists of a sum over all \emph{planar} partitions of the
  boundary points into $\PP$ points and $\Q$ points with that basis diagram regarded as a tensor product of 
  its $\PP$ part and its $\Q$ part.

%

\bibliographystyle{amsalpha}
\providecommand{\bysame}{\leavevmode\hbox to3em{\hrulefill}\thinspace}
\providecommand{\MR}{\relax\ifhmode\unskip\space\fi MR }
\providecommand{\MRhref}[2]{%
  \href{http://www.ams.org/mathscinet-getitem?mr=#1}{#2}
}
\providecommand{\href}[2]{#2}

\end{document}